\title[Moduli of finite flat torsors over nodal curves]{Moduli of finite flat torsors over nodal curves} 
\author[S. Mehidi, T. Poiret]{Sara Mehidi, Thibault Poiret}
\DeclareRobustCommand{\SkipTocEntry}[5]{}
\providecommand{\leftsquigarrow}{%
  \mathrel{\mathpalette\reflect@squig\relax}%
}
\newcommand{\reflect@squig}[2]{%
  \reflectbox{$\m@th#1\rightsquigarrow$}%
}
\let\oldtocsection=\tocsection
\let\oldtocsubsection=\tocsubsection
\let\oldtocsubsubsection=\tocsubsubsection
\renewcommand{\tocsection}[2]{\hspace{0em}\oldtocsection{#1}{#2}}
\renewcommand{\tocsubsection}[2]{\hspace{1em}\oldtocsubsection{#1}{#2}}
\renewcommand{\tocsubsubsection}[2]{\hspace{2em}\oldtocsubsubsection{#1}{#2}}
\numberwithin{equation}{subsection}
\newcommand*{\doublerightarrow}[2]{\mathrel{
  \settowidth{\@tempdima}{$\scriptstyle#1$}
  \settowidth{\@tempdimb}{$\scriptstyle#2$}
  \ifdim\@tempdimb>\@tempdima \@tempdima=\@tempdimb\fi
  \mathop{\vcenter{
    \offinterlineskip\ialign{\hbox to\dimexpr\@tempdima+1em{##}\cr
    \rightarrowfill\cr\noalign{\kern.5ex}
    \rightarrowfill\cr}}}\limits^{\!#1}_{\!#2}}}
\newcommand*{\triplerightarrow}[1]{\mathrel{
  \settowidth{\@tempdima}{$\scriptstyle#1$}
  \mathop{\vcenter{
    \offinterlineskip\ialign{\hbox to\dimexpr\@tempdima+1em{##}\cr
    \rightarrowfill\cr\noalign{\kern.5ex}
    \rightarrowfill\cr\noalign{\kern.5ex}
    \rightarrowfill\cr}}}\limits^{\!#1}}}
\newcommand{\on}[1]{\operatorname{#1}}
\newcommand{\bb}[1]{{\mathbb{#1}}}
\newcommand{\ca}[1]{{\mathcal{#1}}}
\newcommand{\bd}[1]{{\mathbf{#1}}}
\newcommand{\ul}[1]{{\underline{#1}}}
\def\sheafhom{\mathcal{H}om}
\def\sheafext{\mathcal{E}xt}
\newcommand{\cat}[1]{\bd{#1}}
\newcommand{\lra}{\longrightarrow}
\newcommand{\hra}{\hookrightarrow}
\newcommand{\iso}{\stackrel{\sim}{\lra}}
\DeclareMathOperator{\Star}{Star}
\def\:{\colon}
\def\.{,\dots,}
\def\et{\mathrm{\acute{e}t}}
\def\Trojac{\operatorname{TroJac}}
\def\Logjac{\operatorname{LogJac}}
\def\Pic{\operatorname{Pic}}
\def\o#1{\overline{#1}}
\newcommand{\Hom}{\operatorname{Hom}}
\newcommand{\Spec}{\operatorname{Spec}}
\theoremstyle{definition}
\newtheorem{definition}{Definition}[section]
\newtheorem{fact}[definition]{Fact}
\theoremstyle{plain}
\newtheorem{proposition}[definition]{Proposition}
\newtheorem{lemma}[definition]{Lemma}
\newtheorem{theorem}[definition]{Theorem}
\newtheorem{corollary}[definition]{Corollary}
\theoremstyle{remark}
\newtheorem{remark}[definition]{Remark}
\newtheorem{example}[definition]{Example}
\renewcommand{\phi}{\varphi} 
\date{\today}
\newcounter{nootje}
\begin{document}

\begin{abstract}
We show that log flat torsors over a family $X/S$ of nodal curves under a finite flat commutative group scheme $G/S$ are classified by maps from the Cartier dual of $G$ to the log Jacobian of $X$. We deduce that fppf torsors on the smooth fibres of $X/S$ can be extended to global log flat torsors under some regularity hypotheses.
\end{abstract}

\maketitle
	


\tableofcontents

\section{Introduction}

\noindent\textbf{1. The Albanese property.}
Curves with a section map to their Jacobian varieties, giving rise to a pullback map from finite \'etale covers of the Jacobian to finite \'etale covers of the curve. Such covers of the Jacobian are abelian (cf. \cite[Remark 15.3]{Milne1986Ab} and \cite[\S 18 p 167]{Mumford}), i.e. are torsors under the action of a finite abelian group. All abelian covers of curves are pulled back from the Jacobian, see \cite[\S 9]{Milne1986}.\\

In arithmetic geometry, it is natural to consider torsors on families of varieties under finite groups which are not necessarily constant. In this paper, we study such torsors in the fppf topology, on families of smooth curves and their degenerations. Smooth curves naturally degenerate to prestable curves (\ref{definition:prestable}), which admit nodal singularities. Allowing these nodal singularities is crucial for arithmetic applications (there does not exist a smooth curve of positive genus over $\on{Spec}\mathbb Z$) and applications to intersection theory on moduli spaces of curves (the moduli of stable curves is proper, while the moduli of smooth curves is not). Let $X \to S$ be a family of prestable curves and $G$ a finite, flat and commutative $S$-group scheme. The following result is a special case of a theorem of Raynaud (\cite[Proposition 6.2.1]{Raynaud1970Specialisation-}).

\begin{theorem}\label{Rynaud-fppf}
  There is a natural isomorphism\footnote{The notation $\on{Pic}^0$ is introduced in the section ``Notation".}
 \begin{equation}\label{Raynaud_fppf}
H^1_{fppf}(X,G)/H^1_{fppf}(S,G)
\iso \operatorname{Hom}(G^D, \on{Pic}^0_{X/S}).
 \end{equation}

\end{theorem}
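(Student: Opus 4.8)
The plan is to translate the whole statement into $\mathbb{G}_m$-cohomology via Cartier duality and then to run the Grothendieck spectral sequence for $f\colon X\to S$. Writing $G=\sheafhom_S(G^D,\mathbb{G}_m)$ and using the vanishing $\sheafext^1_{fppf}(G^D,\mathbb{G}_m)=0$ valid for a finite flat group scheme $G^D$, the local-to-global Ext spectral sequence collapses in low degrees and yields canonical isomorphisms $H^1_{fppf}(X,G)\cong \Ext^1_X(G^D,\mathbb{G}_m)$, and likewise over $S$. Concretely, the resulting map on $H^1$ sends a $G$-torsor $P$ to the homomorphism $G^D\to\on{Pic}_{X/S}$ obtained by pushing $P$ out along each character, $\chi\mapsto \chi_*P$; this contracted-product description is the natural transformation we will show becomes an isomorphism after dividing by $S$-torsors.

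Next I would apply the derived adjunction $\operatorname{RHom}_X(f^*G^D,\mathbb{G}_m)\cong \operatorname{RHom}_S(G^D,Rf_*\mathbb{G}_m)$ and read off the five-term exact sequence of the associated spectral sequence:
\begin{equation*}
0\to \Ext^1_S(G^D,f_*\mathbb{G}_m)\to \Ext^1_X(G^D,\mathbb{G}_m)\to \operatorname{Hom}_S(G^D,R^1f_*\mathbb{G}_m)\xrightarrow{\ d\ }\Ext^2_S(G^D,f_*\mathbb{G}_m).
\end{equation*}
Since prestable curves are proper with geometrically connected and reduced fibres, $f_*\cO_X=\cO_S$ holds universally, whence $f_*\mathbb{G}_m=\mathbb{G}_{m,S}$; by Cartier duality the two outer terms become $H^1_{fppf}(S,G)$ and $H^2_{fppf}(S,G)$, the middle term is $H^1_{fppf}(X,G)$, and $R^1f_*\mathbb{G}_m=\on{Pic}_{X/S}$ identifies the third term with $\operatorname{Hom}(G^D,\on{Pic}_{X/S})$. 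The sequence thus reads
\begin{equation*}
0\to H^1_{fppf}(S,G)\to H^1_{fppf}(X,G)\to \operatorname{Hom}(G^D,\on{Pic}_{X/S})\xrightarrow{\ d\ }H^2_{fppf}(S,G),
\end{equation*}
exhibiting $H^1_{fppf}(X,G)/H^1_{fppf}(S,G)$ as the kernel of $d$.

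It remains to replace $\on{Pic}$ by $\on{Pic}^0$ and to prove $d=0$. The first point is a clean lemma: any homomorphism out of the finite group scheme $G^D$ has finite, hence fibrewise torsion, image, and a torsion line bundle has trivial multidegree, so the homomorphism factors through $\on{Pic}^0_{X/S}$; thus $\operatorname{Hom}(G^D,\on{Pic}_{X/S})=\operatorname{Hom}(G^D,\on{Pic}^0_{X/S})$. For the second point I would use a section $\epsilon\colon S\to X$ of $f$ through the smooth locus: the identity $\epsilon^*f^*=\mathrm{id}$ retracts $f^*$, which simultaneously makes $f^*$ injective and forces the edge differential $d$ to vanish. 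Combining these gives the asserted isomorphism $H^1_{fppf}(X,G)/H^1_{fppf}(S,G)\iso \operatorname{Hom}(G^D,\on{Pic}^0_{X/S})$.

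The main obstacle I anticipate is exactly the vanishing of $d$, i.e.\ surjectivity of the edge map. The purely homological steps are formal once the geometric inputs are secured, but these inputs are where the work lies over a nodal family: cohomological flatness delivering $f_*\mathbb{G}_m=\mathbb{G}_{m,S}$, the representability and good behaviour of $\on{Pic}^0_{X/S}$ for prestable curves, the vanishing $\sheafext^1_{fppf}(G^D,\mathbb{G}_m)=0$, and — most delicately — the availability of the section $\epsilon$. Since the smooth locus is smooth and surjective over $S$, a section exists after an \'etale localisation; descending the resulting splitting correctly (or phrasing the comparison first as an isomorphism of fppf sheaves on $S$ and only then passing to global sections) is the point requiring the most care.
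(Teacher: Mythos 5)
Your proof is correct and is essentially the argument the paper relies on: the paper does not prove this statement itself but cites Raynaud's Proposition 6.2.1, whose proof --- reproduced by the paper for the log analogue in \ref{raynaudlog}, \ref{pointedlog} and \ref{mainrslt} --- consists of exactly your ingredients (Cartier duality together with the vanishing of $\sheafext^1_{fppf}(G^D,\bb G_m)$, the adjunction/composite-functor spectral sequences combined with $f_*\bb G_{m,X}=\bb G_{m,S}$, a section through the smooth locus to kill the degree-two edge map, and torsion-freeness of the multidegree quotient to replace $\on{Pic}$ by $\on{Pic}^0$). The delicate point you flag, global existence of the section, is treated in the paper by the same device you propose: one works \'etale-locally on $S$, where the smooth locus admits sections, phrasing the comparison at the level of sheaves before passing to global sections.
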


Here $G^D$, $H^1_{fppf}(X,G)$ and $H^1_{fppf}(S,G)$ denote the Cartier dual $\ca{H}om(G,\bb G_{m,S})$ of $G$ and the groups of isomorphism classes of fppf $G$-torsors on $X,S$ respectively. Explicitly, the map \ref{Raynaud_fppf} is obtained as follows: given a relative fppf $G$-torsor $T$ on $X/S$ and a section $f \in G^D(S)=\on{Hom}(G,\bb G_m)$, one naturally obtains a multidegree $0$  $\bb G_m$-torsor (equivalently, line bundle) $f_*T$ on $X/S$ by change of structure group along $f$.\\

By \ref{Raynaud_fppf}, good properties of $\on{Pic}^0$ translate into good properties of fppf $G$-torsors. For example, when the curve $X/S$ is smooth, $\on{Pic}^0_{X/S}$ is the Jacobian of $X/S$, an abelian variety. In particular, it is proper over $S$, which allows one to deduce from \ref{Raynaud_fppf} results about extending relative fppf torsors on $X/S$ from a dense open of $S$ to all of $S$.\\

On the other hand, $\Pic^0$ of a nodal curve is not proper in general. Worse, if the restriction of $X/S$ to some dense open $U \subset S$ is smooth, in general $\on{Pic}^0_{X_U/U}$ cannot be extended to a proper group object over $S$-schemes. Various useful theorems (e.g. extension of relative fppf torsors) which hold over the moduli space $\ca M_g$ of smooth curves fail to extend to the boundary of the Deligne-Mumford space $\o{\ca M}_g$ of stable curves for that reason.\\

\noindent\textbf{2. Log torsors.} Logarithmic geometry was introduced in the late 1980s by Fontaine, Illusie, Kato, and others. It has found numerous applications in moduli theory, degeneration and enumerative problems, as many non-proper moduli spaces, such as the universal Jacobian over $\ca M_g$, admit natural logarithmic compactifications.\\

Prestable curves can be equipped with natural \emph{logarithmic structures}, making them satisfy the analogue of smoothness in the category of logarithmic schemes. A \emph{log curve} is a family of prestable curves equipped with such logarithmic structures. In \cite{molcho_wise_2022}, Molcho and Wise introduce the \emph{logarithmic Picard group} $\on{LogPic}_{X/S}$ of a log curve $X/S$, and show that its degree $0$ part $\on{LogJac}_{X/S}$ is a proper group model of $\on{Pic}^0_{X_U/U}$.

Compactifications over $S$ of $\on{Pic}^0_{X_U/U}$ and of its torsion $\on{Pic}^0_{X_U/U}[n]$ are often used to prove results about Picard groups of smooth curves themselves via degeneration. Degeneration techniques become more powerful as more properties of $\on{Pic}^0$ and $\on{Pic}^0[n]$ are preserved in the compactification. Log Jacobians and their torsion subgroups are the only known such compactifications which remain modular groups. This makes them a powerful tool in algebraic and arithmetic geometry. For example, they are used in \cite{Kajiwara2013Logarithmic-abe} to provide an easier construction of the Deligne-Rapoport compactifications of the modular curves $Y(N),Y_0(N),Y_1(N)$.\\

Our first result is a proof that $\on{LogJac}_{X/S}$ parametrizes $G$-torsors in a topology which arises naturally in log geometry, the \emph{Kummer log flat} (klf) topology. Rougly speaking, these are torsors which are allowed to tamely ramify along a well-chosen divisor. Specifically, we show

\begin{theorem}[\ref{mainrslt}]\label{thm1}
After equipping $X/S$ with a structure of log curve, there is a canonical isomorphism\footnote{Here, $H^1_{klf}$ denotes the group of isomorphism classes of Kummer log flat torsors.}
 \[
 H^1_{klf}(X_{},G_{})/H^1_{klf}(S,G) \simeq   {\Hom}(G_{}^D, \mathrm{LogJac}_{X/S}).
 \]
\end{theorem}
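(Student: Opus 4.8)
The plan is to promote Raynaud's classical isomorphism from \ref{Rynaud-fppf} to its logarithmic analogue, treating the Kummer log flat topology as the "log-smooth-locus" version of the fppf topology. The central idea is that the Molcho--Wise logarithmic Jacobian $\on{LogJac}_{X/S}$ plays, for the log curve $X/S$, exactly the role that $\on{Pic}^0_{X/S}$ plays in Raynaud's theorem. Concretely, given a klf $G$-torsor $T$ on $X$ and a section $f \in G^D(S) = \on{Hom}(G,\bb G_m)$, pushing forward along $f$ produces a Kummer log flat $\bb G_m$-torsor on $X/S$, i.e. a \emph{log line bundle}; controlling its (multi)degree should land it in $\on{LogJac}_{X/S}$. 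So the map
 \[
 H^1_{klf}(X,G)/H^1_{klf}(S,G) \lra \on{Hom}(G^D, \on{LogJac}_{X/S})
 \]
is defined by the same change-of-structure-group recipe as in the introduction, and the theorem asserts it is an isomorphism.

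First I would set up the comparison between klf cohomology of $G$ and ordinary cohomology of the Cartier dual. The engine here is Kato's theory: for the Kummer log flat topology, $H^1_{klf}(X,G)$ should be computable via a Kummer-type exact sequence or a spectral sequence relating $G$-torsors to $\bb G_m^{\log}$-data through the Cartier duality $G^D = \sheafhom(G,\bb G_m)$. The slogan is that a klf $G$-torsor is the same as a homomorphism $G^D \to \on{LogPic}_{X/S}$ together with compatible descent data, and the degree-$0$ condition cuts $\on{LogPic}$ down to $\on{LogJac}$. I would build the map, then verify injectivity and surjectivity by reducing to the two extreme cases: (i) $G = \mu_n$, where Kummer theory identifies $H^1_{klf}(X,\mu_n)$ with $n$-torsion of $\on{LogPic}$, and one reads off $\on{LogJac}[n]$; and (ii) $G$ étale, where klf and fppf (even étale) cohomology coincide and one falls back on the classical Albanese/Raynaud story. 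A general finite flat commutative $G$ is assembled from these by dévissage, using the self-duality of the statement under $G \leftrightarrow G^D$ and the long exact sequences attached to short exact sequences of finite flat group schemes, checking that both sides of the isomorphism are exact functors in $G$.

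The role of quotienting by $H^1_{klf}(S,G)$ is to discard torsors pulled back from the base, which carry no fibrewise Picard information; I would verify that the map kills exactly the image of $H^1_{klf}(S,G)$, so that the induced map on the quotient is well-defined and injective. A technical point I would need to nail down is that $\on{LogJac}_{X/S}$ represents the relevant sheaf of degree-$0$ log line bundles in the klf topology, i.e. that Kummer log flat $\bb G_m$-torsors of multidegree $0$ on $X/S$ are precisely the $S$-points of $\on{LogJac}_{X/S}$ — this is where I would invoke the Molcho--Wise results that $\on{LogJac}$ is a proper modular group model of $\on{Pic}^0_{X_U/U}$.

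The hard part will be the surjectivity and the exactness of klf cohomology on the nodal locus. Unlike the smooth case, at a node the log structure is genuinely rank two and the Kummer log flat site is subtle: one must check that every homomorphism $G^D \to \on{LogJac}_{X/S}$ actually lifts to an honest klf $G$-torsor on the \emph{total space} $X$, not just on smooth fibres, and that no obstruction arises from the monodromy/gluing around the nodes. In classical terms this is the log-geometric replacement for the failure of $\Pic^0$ to be proper over the boundary; the whole point of passing to $\on{LogJac}$ is that this obstruction vanishes, but turning that heuristic into a rigorous lifting argument — likely via a careful analysis of the Kummer-flat Kato--Kummer sequence near a node, or a descent argument along a log blow-up / subdivision that resolves the monodromy — is where I expect the real work to lie. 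I would also need the comparison statement itself (\ref{Rynaud-fppf}, Raynaud) as the input over the smooth/fppf part, so that the log argument only has to supply the boundary correction.
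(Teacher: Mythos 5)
There is a genuine gap in your dévissage step. You propose to reduce a general finite flat commutative $G$ to the two cases $G=\mu_n$ and $G$ \'etale, ``assembled from these by d\'evissage''. This cannot work: in characteristic $p$ there exist local-local finite flat group schemes, such as $\alpha_p$, which admit no filtration whose graded pieces are \'etale or of multiplicative type, so your base cases never reach them. (The paper explicitly cares about this case --- see the section on $\alpha_p$-torsors.) Moreover, the claim that ``both sides of the isomorphism are exact functors in $G$'' is false: a short exact sequence $0 \to G' \to G \to G'' \to 0$ only gives a long exact sequence in klf cohomology, with boundary terms in $H^2$, and $\on{Hom}((-)^D,\on{LogJac})$ is likewise only left exact, with obstructions in $\sheafext^1$; controlling these boundary terms is precisely the kind of work a d\'evissage would have to do and that you have not supplied. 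Finally, your anticipated ``hard part'' --- lifting a homomorphism $G^D \to \on{LogJac}$ to an honest klf torsor by an analysis near the nodes --- is where your outline stops being an argument at all.

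The paper's route avoids both problems and is uniform in $G$. It runs Raynaud's spectral-sequence argument verbatim in the klf topos with $\bb G_m^{\log}$ in place of $\bb G_m$: writing $H(F) = f_*\sheafhom(f^*G,F) = \sheafhom(G,f_*F)$ in two ways gives two Grothendieck spectral sequences, and the comparison of their low-degree terms yields $R^1_{klf}f_*(G_X^D) \simeq \sheafhom(G, R^1_{klf}f_*\bb G_{m,X}^{\log})$ for \emph{every} object of $(fin/S)_f$ at once. The inputs are: (a) $\sheafext^1_{klf}(G_X,\bb G_{m,X}^{\log}) = 0$, deduced from Gillibert's vanishing of $\sheafext^1(G_X,\bb G_{m,X})$ together with the fact that the klf sheaf $\bb G_{m,X}^{trop,klf}$ is torsion-free and uniquely divisible; (b) log cohomological flatness $f_*\bb G_{m,X}^{\log} = \bb G_{m,S}^{\log}$, proved by a combinatorial minimum-principle argument on piecewise-linear functions on the dual graph; and (c) the Leray sequence plus a section through the smooth locus, identifying $H^1_{klf}(X,G)/H^1_{klf}(S,G)$ with $R^1_{klf}f_*G_X(S)$. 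The last step --- passing from $R^1_{klf}f_*\bb G_m^{\log}$ to $\on{LogJac}$ --- is not a lifting argument near the nodes: since $G^D$ is torsion, it suffices that the quotient $R^1_{klf}f_*\bb G_{m,X}^{\log}/\on{LogJac}_{X/S}$ be torsion-free, and this holds because after dividing by $\on{Pic}^0$ that quotient is $\sheafhom(\ca H_{1,X/S},\bb G_m^{trop})/\sheafhom(\ca H_{1,X/S},\bb G_m^{trop})^\dagger$, whose stalks are quotients by \emph{saturated} sublattices (bounded monodromy is a saturated condition). So both the degree-$0$ condition and the bounded-monodromy condition come for free from torsionness of $G^D$; no obstruction near the nodes ever has to be confronted.
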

Since $\on{LogJac}_{X/S}$ is proper, this suggests that the good properties of fppf torsors on smooth curves which follow from \ref{Rynaud-fppf} and the properness of the Jacobian might extend to klf torsors on prestable curves. One such property is \emph{extension of torsors}.\\

\noindent\textbf{3. Extending torsors.} Extension of torsors deals with the following question. If $U,S,G$ are as above, $Y/S$ is a flat morphism of finite type and $T$ is a $G$-torsor on $Y\times_S U \to U$, does there exist an fppf $G$-torsor on $Y/S$ which restricts to $T$? Is such a torsor unique if it exists?\\

This question seems to first appear in work of Grothendieck and Raynaud in the early 1960s (\cite[Exposé X]{GrothRev}, see also \cite[Theorem 5.7.10]{FundamentalGp}). Using the theory of specialization of the \'etale fundamental group, they prove the existence and uniqueness of extensions under various hypotheses on $Y,S,G$, including smoothness and properness of $Y/S$ and constancy of $G/S$.\\

The question has been extensively investigated in more general contexts since then, and many examples are known in which fppf torsors do not extend. For instance, when $G/S$ is étale, any fppf extension must be unramified, and there can be obstuctions to such extensions when $Y/S$ is singular. Logarithmic geometry allows for the consideration of torsors that admit mild ramification.\\


We may use \ref{thm1} to show that fppf torsors on the smooth fibres of prestable degenerations extend uniquely to Kummer log flat torsors on the whole family.


\begin{theorem}[\ref{maincor}]\label{thm2}
Let $\pi \colon X \to S$ be a log curve with $S$ log regular (e.g. $S$ is a toric variety and $\pi$ is smooth over the dense torus, or $S$ is regular and $\pi$ smooth over the complement of a normal crossings divisor). Let $U \subset S$ be the dense open over which $\pi$ is smooth. If $G^D \to S$ is \'etale over $U$ and tamely ramified along $S \setminus U$, then any relative fppf $G_U$-torsor over $X_U$ extends uniquely to a relative klf $G$-torsor over $X$.
\end{theorem}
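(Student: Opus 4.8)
The plan is to combine the two classification results, \ref{Rynaud-fppf} and \ref{thm1}, to convert the extension problem for torsors into an extension problem for homomorphisms into the log Jacobian, and then to solve the latter using properness of $\Logjac_{X/S}$ together with the tameness hypothesis. Over the smooth locus $U$ the log structures of $S$ and of $X$ are trivial, so the Kummer log flat topology on $X_U$ coincides with the fppf topology and $\Logjac_{X/S}$ restricts to $\Pic^0_{X_U/U}$; hence a relative klf $G$-torsor on $X$ restricts over $U$ to a relative fppf $G_U$-torsor on $X_U$. By \ref{Rynaud-fppf} the given torsor $T$ corresponds to a homomorphism $\phi_U \colon G_U^D \to \Pic^0_{X_U/U}$ over $U$, and by \ref{thm1} relative klf $G$-torsors on $X$ correspond to homomorphisms $G^D \to \Logjac_{X/S}$ over $S$. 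Since both isomorphisms are described by the same recipe (change of structure group along sections of the Cartier dual), they are compatible with restriction to $U$, so the theorem is equivalent to the assertion that $\phi_U$ extends uniquely to a homomorphism $\phi \colon G^D \to \Logjac_{X/S}$.

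For uniqueness, note that $S$ is log regular, hence normal, so its associated points are its generic points, which all lie in $U$. As $G^D \to S$ is flat, the associated points of $G^D$ lie over those of $S$, whence $G_U^D$ is schematically dense in $G^D$. Since $\Logjac_{X/S}$ is proper, hence separated, two homomorphisms that agree on the schematically dense open $G_U^D$ agree on all of $G^D$, which yields uniqueness.

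For existence, let $n$ be the exponent of $G$, so that $G^D$ is killed by $n$ and every homomorphism out of it factors through the $n$-torsion. The target $\Logjac_{X/S}[n]$ is a closed subgroup of the proper $\Logjac_{X/S}$, hence proper over $S$, and being quasi-finite it is in fact finite over $S$; over $U$ it is $\Pic^0_{X_U/U}[n]$. The tameness hypothesis is exactly what lets us remove the ramification of $G^D$: in the logarithmic world tame ramification along $S \setminus U$ becomes Kummer étale, so there is a Kummer étale cover $S' \to S$ over which $G^D$ becomes étale-locally constant. Over $S'$, extending a homomorphism out of a constant group amounts to extending finitely many $n$-torsion sections of $\Logjac_{X/S}[n]$ from $U'$ to $S'$, which is possible because a finite scheme over a normal base admits a unique extension of any section defined over a schematically dense open (take the schematic closure of the section and use normality). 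Descending the resulting homomorphism along $S' \to S$ is legitimate because $\Logjac_{X/S}$ is a sheaf for the Kummer log flat topology and the two pullbacks to $S' \times_S S'$ agree by the uniqueness already established; this produces the desired $\phi$.

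The main obstacle is the existence step, and within it the precise role of tameness. The group $G^D$ is genuinely ramified over $S$, so classical extension theorems for homomorphisms out of smooth group schemes do not apply; the point is that tame ramification becomes Kummer étale logarithmically, which both makes the ramification disappear after a Kummer cover and matches the Kummer-flat sheaf property of $\Logjac_{X/S}$ used for descent. The real work lies in verifying that $\Logjac_{X/S}[n]$ is finite over $S$, that a Kummer cover trivializing $G^D$ can be chosen compatibly with the log structure of the log regular base, and that the section-extension and the descent are compatible with the group laws; properness of $\Logjac_{X/S}$ enters precisely to guarantee that the torsion sections extend across the boundary $S \setminus U$.
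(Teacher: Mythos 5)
Your reduction of the problem to extending a homomorphism $\phi_U \colon G_U^D \to \Pic^0_{X_U/U}$ to a homomorphism $G^D \to \Logjac_{X/S}$ is correct and is the same first move as the paper's. But your existence step rests on a false claim: that $\Logjac_{X/S}[n]$, being ``a closed subgroup of the proper $\Logjac_{X/S}$, hence proper over $S$, and quasi-finite,'' is \emph{finite over $S$}. The $n$-torsion of the log Jacobian is only an object of $(fin/S)_f$, not of $(fin/S)_c$: it becomes a strict finite flat group scheme only after pullback along a nontrivial root stack of $S$ (the paper records this explicitly, citing the description of the smallest such klf cover in \cite{Holmes2022Logarithmic-mod}). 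The properness of $\Logjac_{X/S}$ asserted in \ref{theorem:representability_and_properties_logjac} is \emph{log} properness, a valuative criterion in the category of log schemes, so the scheme-theoretic implication ``proper $+$ quasi-finite $\Rightarrow$ finite'' does not apply; indeed, if $\Logjac[n]$ were finite over $S$ the entire Kummer log flat formalism would be unnecessary. Your argument can be repaired — insert a further root stack $S'' \to S'$ over which $\Logjac[n]$ becomes strict finite flat, extend the torsion sections there via \ref{lemma:restriction_group_is_bijective}, and descend along the composite klf cover using \ref{proposition:descent_for_logjac} — but as written the key step fails. A smaller but real gap of the same nature occurs in your uniqueness argument: $\Logjac_{X/S}$ is not separated in the sense you use, since by \ref{theorem:representability_and_properties_logjac} its diagonal is representable by \emph{finite} morphisms, not closed immersions. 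The equalizer of two homomorphisms $G^D \to \Logjac_{X/S}$ is therefore only finite over $G^D$, and to conclude from agreement over $G_U^D$ you must extend the tautological section of this finite morphism across the boundary, using that $G^D$ is normal (Kummer log \'etale over a log regular base, hence log regular) together with \ref{uniq}.

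For comparison, the paper's proof of \ref{maincor} bypasses all of this covering-and-descent work: by \cite[Theorem 6.11]{HMOP}, $\Logjac_{X/S}$ is the N\'eron model of the Jacobian $J$ of $X_U/U$ in the log smooth topology, so since $G^D$ is Kummer log \'etale (hence log smooth) over $S$, the restriction map $\Logjac_{X/S}(G^D) \to J(G_U^D)$ is already bijective; one then concludes with \ref{mainrslt}. The N\'eron mapping property packages exactly the extension-and-uniqueness statement you are trying to prove by hand, applied to the log smooth test object $G^D$ itself. If you want to avoid quoting that theorem, your strategy is the natural substitute, but the finiteness of the torsion subgroup must be bought with a root stack rather than asserted over $S$.
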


When $G/S$ is constant and of invertible order, this was mentioned in \cite[Theorem 7.6]{illusie2002overview}.\\


\noindent\textbf{4. Illustrating the main theorems through examples.} In \ref{section:examples}, we discuss extension of torsors for some groups $G/S$ of interest. The results are summarized below.

\begin{proposition}\label{proposition:to_extend_or_not_to_extend_that_is_the_question}
    Let $X/S$ be a log curve with $S$ log regular. Let $U \subset S$ be the dense open over which the curve is smooth.
    \begin{enumerate}
        \item If $G= \mu_n$ or if the order of $G$ is invertible in $S$ (eg. $G= \bb Z/n \bb Z, G=\mathrm{ LogPic}_{X/S}[n]$ for $n\in \ca O_S(S)^\times$), \ref{thm2} applies and fppf $G$-torsors on $X_U/U$ extend uniquely to klf $G$-torsors on $X/S$ (\ref{ext_torsors}).
        \item Let $p$ be a prime. There exist a discrete valuation ring $R$ of mixed characteristic $(0,p)$, a log curve $X/\Spec R$ and fppf torsors $T,T'$ on the generic fibre of $X/\Spec R$ under the groups $\bb Z/p\bb Z$ and $\on{LogPic}_{X/R}[p]$ such that neither $T$ nor $T'$ extend to klf torsors on $X/\Spec R$. (cf. \ref{non-ext} and \ref{LogJac[n]}).
    \end{enumerate}
\end{proposition}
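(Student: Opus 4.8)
\emph{Part (1).} The content is to verify the hypothesis of \ref{thm2}, that $G^D$ is \'etale over $U$ and tamely ramified along $S\setminus U$; the extension and uniqueness statements then follow formally. For $G=\mu_n$ the Cartier dual is the constant group $G^D=\underline{\mathbb{Z}/n\mathbb{Z}}$, which is finite \'etale over all of $S$ and hence unramified --- a fortiori tamely ramified --- along $S\setminus U$, so \ref{thm2} applies directly. When the order $n$ of $G$ is invertible on $S$, I would invoke the standard fact that a finite locally free commutative group scheme of order invertible on its base is \'etale (it is flat with fibres of order prime to the residue characteristics, hence with \'etale fibres); since $G^D$ has the same order $n$, it too is finite \'etale over $S$, and \ref{thm2} again applies. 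The sample groups $G=\mathbb{Z}/n\mathbb{Z}$ (whose dual $\mu_n$ is \'etale as $n$ is invertible) and $G=\operatorname{LogPic}_{X/S}[n]$ (finite flat of order a power of $n$) are instances of the invertible-order case, so part (1) is exactly \ref{thm2}; this is \ref{ext_torsors}.

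\emph{Part (2).} Here I would combine \ref{thm1} with \ref{Rynaud-fppf} to recast extension of torsors as extension of homomorphisms into the log Jacobian. Writing $\eta=\Spec K$ for the generic point, \ref{Rynaud-fppf} identifies relative fppf $G$-torsors on $X_\eta$ with $\Hom(G^D_\eta,\Pic^0_{X_\eta})$, and \ref{thm1} identifies relative klf $G$-torsors on $X/R$ with $\Hom(G^D,\operatorname{LogJac}_{X/R})$; under these identifications restriction to $\eta$ becomes restriction of homomorphisms. Thus a torsor extends if and only if the associated homomorphism out of $G^D_\eta$ is the generic fibre of a homomorphism $G^D\to\operatorname{LogJac}_{X/R}$, and the task reduces to producing a mixed-characteristic $(0,p)$ degeneration together with a homomorphism that provably fails to extend.

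The guiding principle is that the relevant duals, $G^D=\mu_p$ for $G=\mathbb{Z}/p\mathbb{Z}$ and $G^D\cong\operatorname{LogJac}_{X/R}[p]$ for the self-dual $G=\operatorname{LogPic}_{X/R}[p]$ (via the autoduality of the log Picard group), are connected, hence wildly ramified, in characteristic $p$; \ref{thm2} therefore says nothing and one expects genuine failure. For $T'$ I would take the tautological class: under the autoduality, the inclusion $\operatorname{LogJac}_{X/R}[p]_\eta=\Pic^0_{X_\eta}[p]\hookrightarrow\Pic^0_{X_\eta}$ defines a relative $\operatorname{LogPic}[p]$-torsor, whose extension would force the integral $p$-torsion $\operatorname{LogJac}_{X/R}[p]$ to be a finite flat model of the generic $p$-torsion $\Pic^0_{X_\eta}[p]$. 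Choosing $R$ a suitable finite extension of $\mathbb{Z}_p$ and $X/R$ a degeneration for which this integral $p$-torsion is explicitly understood, I would show that no such model exists: the special fibre of $\operatorname{LogJac}_{X/R}[p]$ has a different local structure (e.g. an $\alpha_p$ or lattice contribution in place of part of $\mu_p$), so the tautological homomorphism does not extend. This is \ref{LogJac[n]}. For $T$ under $\mathbb{Z}/p\mathbb{Z}$ I would extract, inside the same degeneration, a cyclotomic subgroup $\mu_p\hookrightarrow\Pic^0_{X_\eta}$ whose schematic closure in $\operatorname{LogJac}_{X/R}[p]$ is not isomorphic to $\mu_{p,R}$, so that the corresponding homomorphism $\mu_{p,\eta}\to\Pic^0_{X_\eta}$ admits no extension; this is \ref{non-ext}.

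I expect the crux to be the explicit control of the integral $p$-torsion of $\operatorname{LogJac}_{X/R}$ in mixed characteristic --- exactly the regime in which $p$ is non-invertible and the Kummer-log-flat (tame) formalism behind \ref{thm1} ceases to see the wild $p$-part. The two delicate points are (i) engineering the degeneration so that the generic Jacobian carries a cyclotomic $\mu_p$, or a $p$-torsion structure, that is genuinely obstructed rather than one reducing harmlessly into a toric direction, and (ii) proving non-existence of the required finite flat model --- a finite flat group scheme obstruction in the style of Raynaud and Oort--Tate, now internal to the log Jacobian rather than to an ordinary abelian scheme.
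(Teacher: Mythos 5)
Part (1) of your proposal is correct and is essentially the paper's argument: both routes reduce to checking that $G^D$ is (Kummer log) \'etale, so that \ref{thm2} (i.e. \ref{maincor}, via the N\'eron property of $\on{LogJac}$ and \ref{mainrslt}) applies; this is \ref{ext_torsors}.

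Part (2) contains a genuine error. The torsor you propose for $T'$ --- the ``tautological class'' given under self-duality by the inclusion $\on{LogJac}_{X/R}[p]_\eta \hra \Pic^0_{X_\eta}$ --- is precisely the restriction to $\eta$ of the \emph{universal} $\on{LogPic}[p]$-torsor of \ref{LogJac[n]}, namely the inclusion $\on{LogPic}_{X/S}[p] \hra \on{LogPic}_{X/S}$, which is defined over all of $S$; so this torsor always extends and cannot serve as a counterexample. Relatedly, your proposed obstruction (``no finite flat model of the generic $p$-torsion exists'') fails in the paper's setting: $\on{LogPic}_{X/S}[p]$ is always an object of $(fin/S)_f$, and for the paper's example --- the nodal cubic degeneration of \ref{example:nonproper_pic_of_elliptic_degeneration} with discriminant $\pi^p$ over a mixed-characteristic DVR $R$ containing $\zeta_p$ --- it is literally the strict $S$-group $\mu_p \times \bb Z/p\bb Z$ (\ref{non-ext}), so a model exists. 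The paper's actual mechanism is elementary and needs no Raynaud or Oort--Tate theory: with the global splitting $\on{LogPic}_{X/S}[p] = \mu_p \times \bb Z/p\bb Z$ into a connected factor and an \'etale factor, and the purely generic isomorphism $\mu_{p,\eta} \simeq (\bb Z/p\bb Z)_\eta$ (characteristic $0$, $\zeta_p \in R$), one takes homomorphisms that \emph{mix the factors} over $\eta$: for $T$, a map $\mu_{p,\eta} \to \on{LogPic}_{X_\eta/\eta}[p]$ with nonzero projection to the $\bb Z/p\bb Z$ factor; for $T'$, an endomorphism of $\on{LogPic}_{X_\eta/\eta}[p]$ (equivalently a torsor, by the self-duality invoked in \ref{LogJac[n]}) that sends the $\mu_p$ factor nontrivially into the $\bb Z/p\bb Z$ factor. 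Neither extends over $S$, simply because $\mu_{p,S}$ is connected and any homomorphism from a connected group scheme to the discrete group $\bb Z/p\bb Z$ vanishes. Your sketch for $T$ (a cyclotomic $\mu_p$ whose closure is not $\mu_{p,R}$) is this idea in embryo --- in the example the closure of the relevant generic $\mu_p$ is \'etale --- but your proposal never produces the example, misidentifies $T'$, and replaces a one-line connectedness argument with heavy finite-flat-group-scheme machinery that is not needed.
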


\begin{proposition}[\ref{LogJac[n]}]
    Let $X/S$ be a log curve and $n$ an integer. There exists a \emph{universal $\on{LogPic}_{X/S}[n]$-torsor $\ca T$ on $X/S$} with the following universal property. For any finite, flat and commutative group $G/S$ killed by $n$, any $G$-torsor on $X/S$ is obtained from $\ca T$ by change of structure group along a unique map $\on{LogPic}_{X/S}[n] \to G$.
\end{proposition}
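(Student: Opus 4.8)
The plan is to exhibit $\ca T$ as the object corepresenting the torsor functor, and to produce it from \ref{thm1} by a Cartier duality argument. Set $H := \on{LogPic}_{X/S}[n]$; since any $n$-torsion class has trivial multidegree, $H = \on{LogJac}_{X/S}[n]$ is a finite flat commutative $S$-group killed by $n$. First I would note that when $G$ is killed by $n$ so is its Cartier dual $G^D$, whence every homomorphism $G^D \to \on{LogJac}_{X/S}$ factors through the $n$-torsion $\on{LogJac}_{X/S}[n] = H$. Together with \ref{thm1} this yields, naturally in $G$, the identification
\[
H^1_{klf}(X, G)/H^1_{klf}(S, G) \;\cong\; \Hom(G^D, \on{LogJac}_{X/S}) \;=\; \Hom(G^D, H).
\]

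Next I would invoke Cartier duality as an anti-equivalence of finite flat commutative $S$-groups: passing to duals gives a bijection $\Hom(G^D, H) \cong \Hom(H^D, G)$, and using biduality $G^{DD} = G$ one checks it is natural in $G$ (the two functorialities in $G$ --- precomposition by $\phi^D$ on the left, postcomposition by $\phi$ on the right --- are interchanged by $D$). Hence $G \mapsto H^1_{klf}(X,G)/H^1_{klf}(S,G)$ is corepresented by $H^D$ on the category of finite flat commutative $S$-groups killed by $n$, and the universal object is the $H^D$-torsor $\ca T$ attached to $\id_{H^D}$: concretely, $\ca T$ corresponds under \ref{thm1} to the canonical inclusion $H = \on{LogJac}_{X/S}[n] \hookrightarrow \on{LogJac}_{X/S}$. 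For any $\phi \colon H^D \to G$, naturality of \ref{thm1} (change of structure group corresponds to precomposition by $\phi^D$, as one sees from the explicit recipe for \ref{Raynaud_fppf}) shows that the class of $\phi_* \ca T$ matches $\phi$; bijectivity of the corepresentation gives both existence and uniqueness of $\phi$.

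To obtain the statement verbatim --- a torsor under $\on{LogPic}_{X/S}[n] = H$ and change of structure along maps $H \to G$ --- it remains to identify $H^D$ with $H$. I would do this via the self-duality of $\on{LogJac}_{X/S}[n]$, i.e. the Weil pairing arising from the (principal) polarization of the log Jacobian, and transport $\ca T$ along this isomorphism; equivalently, one can check directly that the Abel--Jacobi pullback of the multiplication-by-$n$ isogeny $\on{LogJac}_{X/S} \xrightarrow{n} \on{LogJac}_{X/S}$, which is visibly an $H$-torsor, represents the same class.

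The two steps I expect to cost the most are (i) the self-duality $\on{LogJac}_{X/S}[n]^D \cong \on{LogJac}_{X/S}[n]$, which I would import from the duality theory of the log Jacobian (the log analogue of the autoduality of a curve's Jacobian), and (ii) the ambiguity by $H^1_{klf}(S,G)$ inherent in \ref{thm1}. For (ii) the universal property is really a statement about relative torsors, so I would formulate it modulo torsors pulled back from $S$ (as in \ref{thm1}), or rigidify along a section, and verify that $\ca T$ and the bijection descend accordingly. The residual checks --- functoriality of the classification under change of structure group and its compatibility with Cartier duality --- are formal given the explicit description of \ref{Raynaud_fppf}.
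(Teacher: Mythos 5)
Your proposal is correct and follows essentially the same route as the paper: both apply \ref{mainrslt} together with the observation that $G^D$ is killed by $n$ (so maps $G^D \to \mathrm{LogJac}_{X/S}$ factor through the $n$-torsion), dualize to get maps $\mathrm{LogPic}_{X/S}[n] \to G$, identify $\mathrm{LogPic}_{X/S}[n]$ with its own Cartier dual via the polarization-induced self-duality (the paper imports this from the log Deligne pairing of Molcho--Ulirsch--Wise), and take $\ca T$ to be the class of the inclusion $\mathrm{LogPic}_{X/S}[n] \hookrightarrow \mathrm{LogPic}_{X/S}$. Your extra care about the ambiguity by $H^1_{klf}(S,G)$ and the corepresentability formalism is sound but does not change the substance of the argument.
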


We describe explicitly the universal $\on{LogPic}[n]$-torsor on a nodal degeneration of an elliptic curve in \ref{example:univ_torsor_on_nodal_cubic}.

In equicharacteristic $p$, the building blocks for finite flat commutative group schemes killed by $p$ are $\mu_p$, $\bb Z/p\bb Z$ and the group $\alpha_p$ of $p$-nilpotents inside $\bb G_a$, so we also talk about $\alpha_p$-torsors. We show:

\begin{proposition}[\ref{alpha_p}]
    Let $X/S$ be a log curve such that $p=0$ in $\ca O_S$. Then, the natural map
    \[
    H^1_{fppf}(X/S,\alpha_p) \to H^1_{klf}(X/S,\alpha_p)
    \]
    is an isomorphism.
\end{proposition}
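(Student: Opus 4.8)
The plan is to deduce the statement from the two parametrization results \ref{Rynaud-fppf} and \ref{thm1}, exploiting that $\alpha_p$ is Cartier self-dual, $\alpha_p^D\cong\alpha_p$. Applying both to $G=\alpha_p$ (and writing $H^1(X/S,-)=H^1(X,-)/H^1(S,-)$ for the relative groups) gives natural identifications
\[
H^1_{fppf}(X/S,\alpha_p)\cong\Hom(\alpha_p,\Pic^0_{X/S}),\qquad H^1_{klf}(X/S,\alpha_p)\cong\Hom(\alpha_p,\Logjac_{X/S}).
\]
First I would check that, under these identifications, the natural comparison map corresponds to post-composition with the canonical map $\Pic^0_{X/S}\to\Logjac_{X/S}$ (a multidegree-$0$ line bundle viewed as a log line bundle). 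This is essentially formal: both isomorphisms are realized by change of structure group along a character $f\in\alpha_p^D$, and the comparison map is compatible with this construction, so the relevant square commutes by construction. This reduces the proposition to showing that $\Hom(\alpha_p,\Pic^0_{X/S})\to\Hom(\alpha_p,\Logjac_{X/S})$ is an isomorphism.

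For the latter I would invoke the structure of the log Jacobian from \cite{molcho_wise_2022}: the canonical map fits into a short exact sequence of sheaves
\[
0\lra\Pic^0_{X/S}\lra\Logjac_{X/S}\lra\Trojac_{X/S}\lra 0,
\]
where $\Trojac_{X/S}$ is the tropical Jacobian. Applying the left-exact functor $\Hom(\alpha_p,-)$ gives exactness of
\[
0\lra\Hom(\alpha_p,\Pic^0_{X/S})\lra\Hom(\alpha_p,\Logjac_{X/S})\lra\Hom(\alpha_p,\Trojac_{X/S}),
\]
so it suffices to prove the vanishing $\Hom(\alpha_p,\Trojac_{X/S})=0$; injectivity of $\Pic^0\hookrightarrow\Logjac$ then promotes the first map to an isomorphism.

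The heart of the argument, and the step I expect to require the most care, is this vanishing. The point is that $\alpha_p$ is infinitesimal — it is supported on nilpotents, with $\alpha_p(T)=0$ for every reduced $T$ — whereas $\Trojac_{X/S}$ is a purely combinatorial (tropical) object built from the characteristic monoid $\o M^{\gp}$, and as such is insensitive to nilpotent thickenings: the restriction $\Trojac_{X/S}(T)\to\Trojac_{X/S}(T_{\mathrm{red}})$ is a bijection. Concretely, a homomorphism $\phi\colon\alpha_p\to\Trojac_{X/S}$ is determined by the image of the tautological section over $\alpha_p$, which lands in $\Trojac_{X/S}(\alpha_p)=\Trojac_{X/S}(\Spec k)$ and must agree with the image of the zero section; hence $\phi=0$. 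I would make ``insensitive to nilpotents'' precise by exhibiting $\Trojac_{X/S}$ as a subquotient of sheaves of $\o M^{\gp}$-modules, using that $\o M^{\gp}$ is torsion-free and pulled back from the reduced characteristic structure.

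Finally, I would record the alternative, more hands-on route that bypasses \ref{thm1}: compare the two topologies directly through the projection $\epsilon\colon X_{klf}\to X_{fppf}$. The Leray sequence gives $0\to H^1_{fppf}(X,\alpha_p)\to H^1_{klf}(X,\alpha_p)\to H^0(X,R^1\epsilon_*\alpha_p)$, and $R^1\epsilon_*\alpha_p$ is governed by the Kummer contributions $\Hom(\mu_n,\alpha_p)\otimes(\o M^{\gp}/n)$. Since there are no nonzero homomorphisms from the multiplicative (or étale) group $\mu_n$ to the additive infinitesimal group $\alpha_p$, these all vanish, so $R^1\epsilon_*\alpha_p=0$ and the comparison is an isomorphism, both absolutely and, running the same argument over $S$, relatively. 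This reflects the same phenomenon seen above: the extra klf torsors are tame and tropical in nature, and cannot be assembled from the infinitesimal group $\alpha_p$.
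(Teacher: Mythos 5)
Your proposal is correct, and its main route has the same skeleton as the paper's proof: both sides are identified via the self-duality of $\alpha_p$ (\ref{lemma:hom_ap_Gm_=_ap}) combined with \ref{theorem:Raynaud} and \ref{mainrslt}, and the exact sequence $0 \to \Pic^0_{X/S} \to \Logjac_{X/S} \to \Trojac_{X/S} \to 0$ of \ref{theorem:representability_and_properties_logjac} reduces everything to the vanishing of homomorphisms from $\alpha_p$ into the tropical part. Where you genuinely differ is in the mechanism for that vanishing. The paper first passes to $p$-torsion (legitimate since $\alpha_p$ is killed by $p$), observes that $\on{TroPic}_{X/S}[p](T)$ is a power of $\bb Z/p\bb Z$ for every $T$, and kills $\sheafhom(\alpha_p,\on{TroPic}_{X/S}[p])$ via \ref{lemma:morphisms_alpha_p_to_Z/pZ_are_trivial}, i.e.\ geometric connectedness of $\alpha_p$ against discreteness of $\bb Z/p\bb Z$. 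You instead work with all of $\Trojac_{X/S}$ and play the infinitesimality of $\alpha_p$ (triviality on reduced test objects) against the nilpotent-insensitivity of $\Trojac_{X/S}$; this is equally sound, provided you justify the bijectivity of $\Trojac_{X/S}(T) \to \Trojac_{X/S}(T_{\mathrm{red}})$ by the invariance of the strict \'etale site and of $\o M_T$ under strict infinitesimal thickenings --- the torsion-freeness of $\o M_T^{gp}$ that you invoke is not the relevant point here. Your mechanism is marginally more general (it gives $\sheafhom(G,\Trojac_{X/S})=0$ for any infinitesimal group $G$, with no torsion bookkeeping), while the paper's stays within the combinatorial description of $\on{TroPic}[p]$ that it exploits elsewhere. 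Finally, your sketched alternative via the Leray sequence for $\epsilon \colon X_{klf} \to X_{fppf}$ is a genuinely different argument that the paper does not take: Kato's structure theorem expressing $R^1\epsilon_* G$ through the sheaves $\sheafhom(\mu_n,G)\otimes \o M^{gp}$, together with $\sheafhom(\mu_n,\alpha_p)=0$, yields $R^1\epsilon_*\alpha_p=0$, so the comparison map is an isomorphism already for the absolute $H^1$ of $X$ and of $S$ separately --- a stronger conclusion than the stated relative one, obtained without \ref{theorem:Raynaud} or \ref{mainrslt} at all, at the cost of relying on Kato's computation of $R^q\epsilon_*$ rather than on the Jacobian parametrizations around which the paper is organized.
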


 We conclude with a discussion about the structure of the group of $\alpha_p$-torsors on a prestable curve. We show

\begin{proposition}[\ref{proposition:structure_alpha_p_torsors}]
    Let $\pi \colon X \to \Spec k$ be a prestable curve over an algebraically closed field of characteristic $p$. Let $X^\nu \to X$ be the normalization map and $r,\ca I$ be respectively the $p$-rank and local-local part of $\on{Pic}_{X^\nu/k}[p]$ (see \ref{local_local_part_and_p_rank}). The $k$-group sheaf of relative fppf $\alpha_p$-torsors on $X/\Spec k$ is isomorphic to
    \[
    \alpha_p^{H_1} \times \alpha_p^r \times \sheafhom_k(\alpha_p,\ca I).
    \]
\end{proposition}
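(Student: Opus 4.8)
The plan is to reduce the statement to a computation of $\sheafhom_k(\alpha_p,\Pic^0_{X/k})$ and then to exploit the canonical decomposition of finite commutative group schemes over a perfect field. First I would identify the $k$-group sheaf of relative fppf $\alpha_p$-torsors with $\sheafhom_k(\alpha_p,\Pic^0_{X/k})$. Since $\alpha_p$ is self-dual, applying Theorem~\ref{Rynaud-fppf} to each base change $X_T/T$ and sheafifying in $T$ yields this identification; alternatively one may use Proposition~\ref{alpha_p} to replace fppf by klf torsors and then apply Theorem~\ref{thm1}, noting that the tropical part of $\Logjac_{X/k}$ admits no nonzero homomorphism from the infinitesimal group $\alpha_p$, so that $\sheafhom_k(\alpha_p,\Logjac_{X/k})=\sheafhom_k(\alpha_p,\Pic^0_{X/k})$. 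As $\alpha_p$ is killed by $p$, every homomorphism from it factors through the $p$-torsion, so $\sheafhom_k(\alpha_p,\Pic^0_{X/k})=\sheafhom_k(\alpha_p,\Pic^0_{X/k}[p])$, and the target is now a finite commutative $k$-group scheme.

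Next I would analyse $G_p:=\Pic^0_{X/k}[p]$. The generalized Jacobian fits in a semiabelian sequence $0\to\bb G_m^{H_1}\to\Pic^0_{X/k}\to \Pic^0_{X^\nu/k}\to 0$, in which $H_1$ is the first Betti number of the dual graph (equivalently the torus rank of $\Pic^0_{X/k}$) and $A:=\Pic^0_{X^\nu/k}$ is the product of the Jacobians of the components of $X^\nu$. Taking $p$-torsion gives $0\to\mu_p^{H_1}\to G_p\to A[p]\to 0$. Over the perfect field $k$ the category of finite commutative group schemes is the product of its \'etale, multiplicative and local-local subcategories, so $G_p$ splits canonically as a direct product of its three parts; this is the source of the product---rather than merely an extension---appearing in the statement. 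Writing $A[p]=(\ZZ/p\ZZ)^r\times\mu_p^r\times\ca I$ for the corresponding decomposition of $A[p]$, and noting that the torus contributes only a multiplicative part $\mu_p^{H_1}$, a comparison of types together with an order count identifies the three parts of $G_p$ as $(\ZZ/p\ZZ)^r$, $\mu_p^{H_1+r}$ and $\ca I$.

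It then remains to apply $\sheafhom_k(\alpha_p,-)$ to this product. Cartier duality gives $\sheafhom_k(\alpha_p,\ZZ/p\ZZ)=\sheafhom_k(\mu_p,\alpha_p)=0$ and $\sheafhom_k(\alpha_p,\mu_p)=\sheafhom_k(\ZZ/p\ZZ,\alpha_p)=\alpha_p$, so the \'etale part drops out, the multiplicative part $\mu_p^{H_1+r}$ yields $\alpha_p^{H_1+r}=\alpha_p^{H_1}\times\alpha_p^r$, and the local-local part yields $\sheafhom_k(\alpha_p,\ca I)$, producing the asserted isomorphism. (The third factor is genuinely left unsimplified: for a supersingular component it already equals $\bb G_a$, so the torsor sheaf need not be finite.)

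The step I expect to be the main obstacle is the first one: upgrading Theorem~\ref{Rynaud-fppf} from an isomorphism of abelian groups to an isomorphism of $k$-group sheaves, which requires checking naturality under base change $T\to\Spec k$ and that the correction term $H^1_{fppf}(T,\alpha_p)$ sheafifies away. The structure theory used afterwards is standard, but one must be careful to decompose only \emph{after} passing to $p$-torsion: no direct-product splitting exists at the level of the non-finite groups $\bb G_m^{H_1}$ and $\Pic^0_{X/k}$, and it is precisely the finiteness of $G_p$ over the perfect field $k$ that makes the splitting canonical.
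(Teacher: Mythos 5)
Your proposal is correct and follows essentially the same route as the paper: Raynaud's theorem combined with the self-duality of $\alpha_p$ reduces the statement to computing $\sheafhom_k(\alpha_p,\Pic_{X/k}[p])$, which is then evaluated on the split $p$-torsion normalization sequence $0\to\mu_p^{H_1}\to\Pic_{X/k}[p]\to\Pic_{X^\nu/k}[p]\to 0$ together with the decomposition $(\mu_p\times\bb Z/p\bb Z)^r\times\ca I$ and the vanishing of $\sheafhom(\alpha_p,\bb Z/p\bb Z)$ --- your only divergence is obtaining the splitting from the canonical \'etale/multiplicative/local-local trichotomy over a perfect field plus an order count, where the paper splits the two restricted extensions by hand using the maximality of the local-local part and the fact that extensions of $\mu_p$ or $\bb Z/p\bb Z$ by $\mu_p$ killed by $p$ are trivial, i.e.\ the same standard facts packaged differently. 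The obstacle you flag at the end is not one: the remark following \ref{theorem:Raynaud} records Raynaud's result directly as an isomorphism of sheaves, $R^1_{fppf}\pi_*\pi^*G\iso\sheafhom(G^D,\Pic_{X/S})$, so no separate base-change or sheafification argument is needed.
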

We discuss which groups $\sheafhom_k(\alpha_p,\ca I)$ can arise in \ref{remark:alpha_p_tors_on_nodal_curve} and \ref{proposition:hom_ap_ap_=_Ga}.\\

\noindent\textbf{5. More applications.} Our main motivation for \ref{thm1} was to obtain (klf) extension of torsors in cases where no fppf extension exists, as in \ref{thm2}. One could push this further: even when not all $G_U$-torsors on $X_U/U$ extend to log $G$-torsors on $X/S$, one might want to find another group model $G'/S$ of $G_U$ and an extension to a log $G'$-torsor instead. \ref{Group stack} suggests that this might be possible at the cost of allowing $G'$ to be a group stack. We intend to explore this direction further in future work, using the work of \cite{SBr} on dualizable group stacks.

\ref{thm1} and klf extension open the way to the use of degeneration techniques for the classification of torsors. For example, we can produce geometrically meaningful invariants for torsors on families of smooth curves as follows. Let $s \to S$ be a geometric point such that the fiber $X_s$ is nodal and $n$ be an integer which kills $G$. It follows from \ref{thm1} that klf $G$-torsors on $X/S$ are partitioned by maps
\begin{equation}\label{discrete_invariant}
    G_s^D \to \on{TroPic}_{X_s/s}[n],
\end{equation}
where the \emph{tropical Picard group} $\on{TroPic}_{X_s/s}$ is an explicit, purely combinatorial invariant defined in terms of the homology of the dual graph of $X_s/s$. This makes the maps \ref{discrete_invariant} convenient to compute and work with. In particular, when we are in a situation in which klf extension of $G$-torsors holds, fppf $G_U$-torsors on the smooth curve $X_U/U$ can be partitioned by maps \ref{discrete_invariant} as well. For one-parameter degenerations, one could already extract these invariants using \cite[Corollary 1.10]{Sara} and an analysis of the connected components of the N\'eron model of the Jacobian. The interest of \ref{thm1} lies in showing that the behavior of these invariants in larger families is explicitly controlled by the geometry of $\on{TroPic}$, hence by the monodromy action on the dual graph of $X_s/s$ when $n$ is invertible on $S$.\\

\addtocontents{toc}{\SkipTocEntry}
\section*{Acknowledgements} 

The authors would like to warmly thank Jean Gillibert, Pim Spelier, Jonathan Wise, Valentijn Karemaker, Rachel Pries, Dajano Tossici, Niels Borne, Soumya Sankar and Matthieu Romagny for useful conversations and email exchanges about this work, as well as Dhruv Ranganathan, Marta Pieropan and David Holmes for their helpful comments on an earlier version. We also thank the referee for their useful suggestions to improve the presentation of the paper.

We thank the Institut de Math\'ematiques de Bordeaux for funding a research visit of T. Poiret to Bordeaux. S. Mehidi was funded partially by the Dutch Research Council (NWO) grant VI.Vidi.213.019, which also funded a research visit of T. Poiret to Utrecht. T. Poiret was funded partially by EPSRC New Investigator Grant EP/V051830/1 and by EPSRC New Investigator award EP/X002004/1.

For the purpose of open access, a CC BY public copyright license is applied to any Author Accepted Manuscript version arising from this submission.

\section*{Notation}\label{Notation}
 Given a scheme or log scheme $S$ and a $S$-group $G$, we denote by $G^0$ the fiberwise-connected component of identity of $G$. The main cases of interest in this paper are Picard groups and log Picard groups of nodal curves, which have a natural \emph{degree map} of $S$-sheaves $\on{deg} \colon G \to \bb Z$. We then denote by $G^{\mathrm{deg}=0}$ the kernel of the degree map. When $G$ is a log Picard group, we have $G^0=G^{\deg=0}$. When $G$ is the relative Picard group of a nodal curve $\pi \colon X \to S$, $G^0$ coincides with the \emph{multidegree $0$ part of $G$}, i.e. the subsheaf parametrizing line bundles of degree $0$ on every irreducible component of every geometric fibre of $\pi$.

\section{Background}\label{background}

We begin by recalling in \ref{Raynaud} a result of Raynaud, which implies that fppf torsors over a proper curve $X$ under a finite group are parametrized by the multidegree $0$ Picard group of $X$. We are particularly interested in applications to extension of torsors on families of curves. For such applications, it is best to work with proper moduli spaces, hence with stable curves. Multidegree $0$ Picard groups of stable curves are not themselves proper: to obtain a proper group moduli space, one must work with the \emph{logarithmic Jacobian} $\mathrm{LogPic}^{0}$ instead. This raises two natural questions: is there a logarithmic analogue of Raynaud's result, and can one use it to extend torsors? We will give affirmative answers to both questions in our main results \ref{mainrslt} and \ref{maincor}. We first give the necessary background on moduli of torsors and on log geometry.


\subsection{Specialization of fppf torsors}\label{Raynaud}

\begin{definition}\label{definition:prestable}
    A \emph{prestable curve} is a proper and flat morphism $\pi \colon X \to S$ whose geometric fibres are at-worst-nodal, of pure dimension $1$ and connected. By \cite[\href{https://stacks.math.columbia.edu/tag/0E6L}{Tag 0E6L}]{stacks-project}, this implies $\pi_*\ca O_X=\ca O_S$ universally.
\end{definition}

We refer to \cite[Proposition 6.2.1]{Raynaud1970Specialisation-} for the following theorem, although Raynaud states that he only gives a proof because of the lack of a suitable reference.

\begin{theorem}\label{theorem:Raynaud}
    Let $\pi \colon X \to S$ be a proper, flat and finitely presented morphism. Let $G$ be a finite, flat, finitely presented and commutative $S$-group scheme. If $\pi_*\ca O_X=\ca O_S$ universally, then there is a canonical isomorphism
    \begin{equation}\label{eqn:Raynaud}
        H^1_{fppf}(X,G)/H^1_{fppf}(S,G) = \on{Hom}(G^D,\on{Pic}_{X/S}),
    \end{equation}
    where $G^D$ is the Cartier dual $\sheafhom_S(G,\bb G_m)$ of $G$.
\end{theorem}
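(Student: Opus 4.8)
The plan is to reinterpret fppf $G$-torsors as extensions of the Cartier dual $G^D$ by $\bb G_m$, and then to descend the computation from $X$ to $S$ by means of a Leray-type spectral sequence for $\pi$, using the hypothesis $\pi_*\ca O_X=\ca O_S$ to control the low pushforwards of $\bb G_m$. The first ingredient is pure Cartier duality: for any base $Y$ and any finite locally free commutative group $G/Y$, biduality identifies the fppf sheaf $\sheafhom(G^D,\bb G_{m,Y})$ with $G$, while the local sheaves $\sheafext^i(G^D,\bb G_{m,Y})$ vanish for $i\ge 1$ (a standard vanishing for finite flat group schemes, reducible to the local case). Hence $R\sheafhom(G^D,\bb G_m)=G[0]$, and the local-to-global $\Ext$ spectral sequence collapses, yielding canonical isomorphisms $\Ext^i_{Y}(G^D,\bb G_m)\cong H^i_{fppf}(Y,G)$ for all $i$. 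I would record this both for $Y=X$ and for $Y=S$.

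Next I would descend to $S$. Since $G^D_X=\pi^*G^D$ and $\pi^*$ is exact, the adjunction $R\Hom_X(\pi^*G^D,\bb G_m)\cong R\Hom_S(G^D,R\pi_*\bb G_m)$ produces a spectral sequence
\[
E_2^{p,q}=\Ext^p_S(G^D,R^q\pi_*\bb G_m)\;\Longrightarrow\; H^{p+q}_{fppf}(X,G).
\]
Here $R^0\pi_*\bb G_m=\bb G_{m,S}$, because $\pi_*\ca O_X=\ca O_S$ universally forces $\pi_*\bb G_{m,X}=\bb G_{m,S}$, and $R^1\pi_*\bb G_m=\Pic_{X/S}$ by definition of the relative Picard functor. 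Combining this with the Cartier-duality identifications, the five-term exact sequence reads
\[
0\to H^1_{fppf}(S,G)\to H^1_{fppf}(X,G)\to \Hom_S(G^D,\Pic_{X/S})\xrightarrow{\,d_2\,} H^2_{fppf}(S,G).
\]
Injectivity of the first map and the identification of the quotient $H^1_{fppf}(X,G)/H^1_{fppf}(S,G)$ with $\ker d_2$ are then immediate, so the whole theorem reduces to the vanishing of $d_2$.

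The vanishing $d_2=0$ is where the geometry enters, and I expect it to be the genuine obstacle: there is no global section of $\pi$, so one cannot split the edge map directly. To get around this I would phrase the entire construction as a morphism of fppf sheaves on $S$, so that $d_2$ becomes a map $\sheafhom_S(G^D,\Pic_{X/S})\to \sheafext^2_S(G^D,\bb G_m)$ that is natural under base change. The hypothesis $\pi_*\ca O_X=\ca O_S$ universally forces $\pi$ to be faithfully flat, hence an fppf cover of $S$; after base change along $\pi$ itself, the family $X\times_S X\to X$ acquires the diagonal section $\Delta$, and pullback along $\Delta$ splits the relevant edge map, so the differential vanishes over this cover. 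Being a morphism of fppf sheaves that dies fppf-locally, $d_2$ must be identically zero, which yields the desired isomorphism. It then remains only to check that the resulting map coincides with the explicit assignment $(T,f)\mapsto f_*T$ from the introduction, which is a routine naturality verification; the one point demanding genuine care is running the section argument at the level of sheaves on $S$ rather than of cohomology groups, since the analogous manipulation with global classes would be circular.
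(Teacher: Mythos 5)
Your overall route is the same one the paper (following Raynaud) takes: Cartier duality, the spectral sequences attached to the composite functor $\sheafhom_S(G^D,-)\circ\pi_{*}=\pi_{*}\circ\sheafhom_X(G^D_X,-)$ evaluated on $\bb G_m$, the inputs $\pi_*\bb G_{m,X}=\bb G_{m,S}$ and $\sheafext^1(G^D,\bb G_m)=0$, and sections of $\pi$ obtained by fppf localization along $\pi$ itself to kill the obstruction differential; this is exactly the skeleton of \ref{raynaudlog} and \ref{pointedlog}. However, one of your ``standard facts'' is false, and there is a genuine structural gap. The false fact first: the vanishing $\sheafext^i(G^D,\bb G_m)=0$ is a theorem only for $i=1$ (SGA~7$_{\rm I}$, Exp.~VIII, 3.3.1; the paper's log substitute is \ref{Hom and Ext}). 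For $i\geq 2$ it fails in general in characteristic $p$ (Breen), so $R\sheafhom(G^D,\bb G_m)\neq G[0]$ and $\Ext^i_Y(G^D,\bb G_m)\not\cong H^i_{fppf}(Y,G)$ for $i\geq 2$. The five-term sequence you write therefore ends in $\Ext^2_S(G^D,\bb G_m)$, not $H^2_{fppf}(S,G)$. This error is partly self-correcting, since your argument for $d_2=0$ never uses the higher vanishing --- indeed, if that vanishing were available, no geometric argument would be needed at all, because the sheaf-level $d_2$ would map into the zero sheaf; the very existence of the section trick in Raynaud's proof is a symptom of its failure.

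The structural gap is the more serious point: you reduce the theorem to the vanishing of the differential of the \emph{global} $\Ext$ spectral sequence, $d_2^{\rm gl}\colon \Hom_S(G^D,\Pic_{X/S})\to\Ext^2_S(G^D,\bb G_m)$, but your covering argument proves the vanishing of the \emph{sheaf-level} differential $d_2^{\rm sh}\colon \sheafhom_S(G^D,\Pic_{X/S})\to\sheafext^2_S(G^D,\bb G_m)$. These are not equivalent: a morphism of sheaves that dies on an fppf cover is zero, but a class in $\Ext^2_S$ (like a class in $H^2$) that dies on a cover need not be. Quantitatively, since $\sheafext^1_S(G^D,\bb G_m)=0$, the local-to-global spectral sequence embeds $H^2_{fppf}(S,G)=H^2(S,\sheafhom(G^D,\bb G_m))$ into $\Ext^2_S(G^D,\bb G_m)$ as precisely the kernel of the comparison map $\Ext^2_S(G^D,\bb G_m)\to\Gamma(S,\sheafext^2_S(G^D,\bb G_m))$. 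So $d_2^{\rm sh}=0$ only tells you that $d_2^{\rm gl}$ factors through this copy of $H^2_{fppf}(S,G)$, and the residual map $\Hom_S(G^D,\Pic_{X/S})\to H^2_{fppf}(S,G)$ is exactly the Leray-type obstruction (the differential $H^0(S,R^1\pi_*\pi^*G)\to H^2_{fppf}(S,G)$), which is a global object the covering trick cannot touch --- as you yourself observe, the manipulation with global classes is circular. What your argument does establish is Raynaud's actual, sheaf-theoretic statement $R^1_{fppf}\pi_*\pi^*G\iso\sheafhom(G^D,\Pic_{X/S})$. The paper deliberately splits the proof into that sheaf statement (\ref{raynaudlog}) and a separate Leray comparison (\ref{pointedlog}) identifying $\Gamma(S,R^1\pi_*\pi^*G)$ with the quotient $H^1_{fppf}(X,G)/H^1_{fppf}(S,G)$, and it is in this second step that an honest section of $\pi$ is used to make $H^2(S,G)\to H^2(X,G)$ injective. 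Your proposal has no counterpart of \ref{pointedlog}; the ``routine naturality verification'' you defer is exactly where this $H^2_{fppf}(S,G)$-valued obstruction lives, and the passage from the sheaf isomorphism to the displayed global isomorphism \ref{eqn:Raynaud} remains unproved.
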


\begin{remark}
    Raynaud formulates this result as
    \[
    R^1_{fppf}\pi_*\pi^*G \iso \sheafhom(G^D,\on{Pic}_{X/S}).
    \]
    Under our hypotheses the first derived pushforward $R^1_{fppf}\pi_*\pi^*G$ parametrizes relative $G$-torsors on $X/S$.
\end{remark}
When $X/S$ is a prestable curve, we deduce that $\mathrm{Pic}^{0}_{X/S}$ can serve as a parametrizing space for torsors under \emph{any} finite flat commutative $S$-group scheme (since the Cartier dual of such a group remains finite flat). When \( X/S \) is smooth, \( \mathrm{Pic}^{0}_{X/S} \) is the Jacobian of \( X/S \), which is proper, and the isomorphism above becomes particularly useful in practice.

\subsection{Prestable curves and their Picard groups}\label{Prestable}

The relative Picard group $\on{Pic}_{X/S}$ of a smooth curve $X/S$ admits a degree map to $\bb Z$ whose kernel $\on{Pic}^{deg=0}_{X/S}$ is an abelian scheme over $S$, the \emph{Jacobian}. When $X/S$ is only a prestable curve, $\on{Pic}^{deg=0}_{X/S}$ remains a smooth group over $S$, but is no longer proper. It is known that there is no smooth and proper group functor over the moduli space of prestable curves which coincides with the Jacobian over the locus of smooth curves. Finding good models of the Jacobian is a subtle problem subject to a vast literature, including but not restricted to [\cite{Ishi}, \cite{DSouza},
\cite{OdaSeshadri}, \cite{AltmanK80}, \cite{AltmanK79},\cite{Kajiwara}, \cite{Caporaso}, \cite{Pandharipande}, \cite{Jarvis}, \cite{Esteves}, \cite{CaporasoL}, \cite{CaporasoLu}, \cite{Melo}, \cite{Chiodo}].

In \cite{molcho_wise_2022}, it is shown that one \emph{can} get such a smooth and proper group functor at the cost of working in the category of \emph{log schemes}. Log schemes are schemes equipped with some extra rigidifying data. Prestable curves can be equipped with natural log scheme structures, making them \emph{log curves}. Molcho and Wise define the \emph{logarithmic Picard group} $\on{LogPic}_{X/S}$ of a log curve $X/S$ and prove

\begin{fact}
    Let $X/S$ be a log curve. Then $\on{LogPic}_{X/S}$ is a log smooth group over $S$. There is a natural map $\on{Pic}_{X/S} \to \on{LogPic}_{X/S}$, which is an isomorphism if $X/S$ is smooth. The degree map on $\on{Pic}_{X/S}$ extends to a degree map $\on{LogPic}_{X/S} \to \bb Z$, whose kernel is proper.
\end{fact}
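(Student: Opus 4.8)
The plan is to recall the construction of $\on{LogPic}_{X/S}$ from \cite{molcho_wise_2022} and to locate each of the four assertions within their framework; all of them are theorems of Molcho and Wise, and the genuine content lies in the properness statement. Recall that the logarithmic multiplicative group $\bb G_m^{\log}$ sits in an exact sequence $1 \to \bb G_m \to \bb G_m^{\log} \to \overline{M}_X^{\gp} \to 0$, where $\overline{M}_X^{\gp}$ is the groupified characteristic sheaf, and that a \emph{log line bundle} is a $\bb G_m^{\log}$-torsor. One defines $\on{LogPic}_{X/S}$ as the subsheaf of $R^1\pi_*\bb G_m^{\log}$ consisting of log line bundles satisfying the \emph{bounded monodromy} condition; its group structure comes from the contracted product of torsors. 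Pushing the inclusion $\bb G_m \hookrightarrow \bb G_m^{\log}$ forward yields the natural map $\on{Pic}_{X/S} = R^1\pi_*\bb G_m \to \on{LogPic}_{X/S}$, while the quotient $\overline{M}_X^{\gp}$ provides the tropicalization, so that everything fits into a short exact sequence
\[
0 \to \on{Pic}_{X/S} \to \on{LogPic}_{X/S} \to \on{TroPic}_{X/S} \to 0.
\]

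For the isomorphism in the smooth case I would argue that when $\pi$ is smooth the fibres carry no nodes, so the relative characteristic $\overline{M}_{X/S}^{\gp}$ vanishes and $\bb G_m^{\log}$ agrees with $\bb G_m$ relatively to $S$; hence $\on{TroPic}_{X/S}=0$ and the map $\on{Pic}_{X/S} \to \on{LogPic}_{X/S}$ is an isomorphism by the exact sequence above. For log smoothness I would invoke the representability and deformation theory established in \cite{molcho_wise_2022}: the infinitesimal deformations of a log line bundle are controlled by $H^1(X,\cO_X)$ exactly as in the classical case, since the log directions are combinatorial and contribute to the tropical part rather than to the classical Lie algebra. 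Thus the infinitesimal lifting criterion for log smoothness is satisfied, once one knows $\on{LogPic}_{X/S}$ is a well-behaved (log algebraic) group, which is part of their structural theorem.

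The degree map is obtained by observing that $\on{Pic}_{X/S}$ and $\on{TroPic}_{X/S}$ carry compatible total-degree maps to $\bb Z$, which therefore descend to a degree map on the extension $\on{LogPic}_{X/S}$, with kernel $\on{LogJac}_{X/S}=\on{LogPic}_{X/S}^{\deg=0}$. The properness of this kernel is the main obstacle and the heart of \cite{molcho_wise_2022}: one verifies the valuative criterion in the logarithmic category. The key points are that the tropical part $\on{TroJac}_{X/S}$ is already compact, being modelled on the real torus $H_1(\Gamma,\bb R)/H_1(\Gamma,\bb Z)$ of the dual graph $\Gamma$, and that the bounded monodromy condition is precisely what lets one produce a limit of a log line bundle over the closed point of a log trait while remaining inside $\on{LogJac}_{X/S}$; the log structure then forces this limit to be unique, yielding separatedness. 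Carrying out this criterion carefully — in particular establishing existence of limits from bounded monodromy — is the genuinely difficult step, so I would cite the corresponding theorem of Molcho and Wise rather than reprove it here.
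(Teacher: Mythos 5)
Your high-level strategy --- delegating the structural theorems (log smoothness, properness, the extension structure) to \cite{molcho_wise_2022} --- is exactly what the paper does: the Fact is stated there as a citation, and the precise references are collected in \ref{theorem:representability_and_properties_logjac}. But the exact sequence on which you hang the remaining assertions is false. For a genuinely nodal curve, the natural map $\Pic_{X/S} \to \on{LogPic}_{X/S}$ is \emph{not} injective, so there is no short exact sequence
\[
0 \to \Pic_{X/S} \to \on{LogPic}_{X/S} \to \on{TroPic}_{X/S} \to 0.
\]
Indeed, from the long exact sequence attached to $0 \to \ca O_X^\times \to M_X^{gp} \to \o M_X^{gp} \to 0$, the kernel of $\Pic_{X/S} \to R^1\pi_*M_X^{gp}$ is the image of $\pi_*\o M_X^{gp}$, i.e.\ the classes $\ca O(\phi)$ of piecewise-linear functions $\phi$ on the dual graph (equivalently, twists by vertical divisors), and these are nontrivial in $\Pic$ whenever their multidegree is nonzero. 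The paper makes a point of this phenomenon: in \ref{example:nonproper_pic_of_elliptic_degeneration}, the bundles $\ca O(D_i)$ on the cycle of rational curves $X'$ have multidegree $[D_{i-1}]-2[D_i]+[D_{i+1}] \neq 0$, yet they die in $\on{LogJac}$. This is precisely why the sequence \ref{ExactLogPic} of Molcho--Wise has the \emph{multidegree-zero} subgroup $\Pic^0_{X/S}$ on the left --- not $\Pic_{X/S}$, and not even $\Pic^{\mathrm{deg}=0}_{X/S}$.

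Two of your deductions lean on the false sequence. First, the smooth case: the conclusion is correct, but you should argue it directly rather than quote the general sequence. When $\pi$ is smooth it is strict, so $\pi_*\o M_X^{gp}=\o M_S^{gp}$ consists of ``constant'' piecewise-linear functions, whose associated line bundles are pulled back from $S$ and hence vanish in the relative Picard sheaf; this gives injectivity, and vanishing of the tropical part (e.g. $R^1\pi_*\o M_X^{gp}=0$ since the stalks of $\o M_S^{gp}$ are torsion-free) gives surjectivity. Second, the degree map: a homomorphism out of an extension is neither determined by, nor guaranteed to exist given, ``compatible'' maps on the sub and the quotient --- and with your sequence the restriction of $\deg$ to $\Pic$ is nonzero, so $\deg$ cannot factor through the quotient, making the word ``descend'' meaningless there. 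With the correct sequence the construction is immediate: $\deg$ vanishes on $\Pic^0_{X/S}$, so the degree map on $\on{LogPic}_{X/S}$ is pulled back from the one on $\on{TroPic}_{X/S}$, and one checks it restricts to the classical degree on the image of $\Pic_{X/S}$. The parts you correctly identify as the hard content (log smoothness, properness of the kernel) and cite from Molcho--Wise are handled identically in the paper, so no complaint there.
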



We collect a few well-known results about the structure of prestable curves and their Picard groups in families. We put a particular emphasis on the special case of prestable curves over an affine toric variety. In this setting, log Picard groups show up naturally. All our toric varieties are normal unless specified otherwise.

\subsubsection{The failure of properness of $\on{Pic}$}

We illustrate the failure of properness of the degree $0$ part of $\on{Pic}$ for nodal curves, in a way that hints at what the log Picard group should be.

\begin{example}\label{example:nonproper_pic_of_elliptic_degeneration}

    Let $S=\on{Spec} R$ where $R$ is a discrete valuation ring with uniformizer $\pi$. Let $\eta,s \in S$ be the generic point and special point respectively. Let $X/S$ be a prestable curve, elliptic over $\eta$ with discriminant $\pi^n$, and whose fibre over $s$ is a nodal cubic (\ref{nodalcu}). 

\begin{figure}[ht]
    \begin{center}
\begin{tikzpicture}

\draw[thick] (0,0) .. controls (-1,2) and (2,2) .. (0,4);

\draw [thick, xshift=3cm, yshift=2cm] plot [smooth, tension=1] coordinates { (0.75,-2) (-0.5,0.75) (-0.5,-0.75) (0.75,2)};
    


\node at (1.5,1.5) { $\rightsquigarrow$};
\node at (0,-0.5) {$t \neq 0$};
\node at (3,-0.5) {$t=0$};

\end{tikzpicture}

\end{center}
    \caption{The family of curves $X/S$}\label{nodalcu}
\end{figure}

    Then, the local equation for the total space $X$ at the node $x\in X_s$ is $uv=\pi^n$, where $u,v$ are local parameters for the two branches of $X_s$. The dual graph $\Gamma_{X/S}$ of $X_s$ (cf. \ref{dualgr}) has one vertex and one loop corresponding to $x$, and we metrize it by giving length $n$ to the loop.

    The restriction map
    \begin{align}\label{eqn:restriction_map_pic_DVR_0}
        \Phi_{X/S} \colon \on{Pic}_{X/S}(S) \to \on{Pic}_{X/S}(\eta)
    \end{align}
    is not surjective. In particular, $\on{Pic}_{X/S}$ is not proper. We may repeatedly blow up $X$ at nodes of the special fibre until we obtain a model $X' \to X$ such that
    \begin{itemize}
        \item $X'_\eta = X_\eta$.
        \item The special fibre $X'_s$ consists of $n$ rational components $(D_i)_{i\in \bb Z/n\bb Z}$, meeting cyclically along nodes at which the local equation for $X'$ is $uv=\pi$. Reindexing if necessary, we assume that $D_0$ is the strict transform of $X_s$ in $X'_s$.
    \end{itemize}

The induced morphism of metrized dual graphs $\Gamma_{X'/S} \to \Gamma_{X/S}$ is a \emph{subdivision}, i.e. the source is obtained from the target by replacing some edges with chains of edges of the same total length (\ref{graph}).
\begin{figure}[ht]
    \begin{center}
\begin{tikzpicture}

\draw (0,0) circle(1);

\draw (6,0) circle(1);

\draw[->,thick] (2.5,0) -- (3.5,0);

\filldraw[black] (7,0) circle (3pt);

\foreach \angle in {0, 72, 144, 216, 288} {
    \filldraw[black] ({cos(\angle)}, {sin(\angle)}) circle (3pt);
}

\foreach \i in {0, 1, 2, 3, 4} {
    \pgfmathtruncatemacro{\next}{mod(\i+1,5)}
    \node at ({(cos(\i*72) + cos(\next*72))/2 * 1.7}, 
              {(sin(\i*72) + sin(\next*72))/2 * 1.7}) {1};
}

\node at (6,1.5) {5};

\end{tikzpicture}


\end{center}
    
    \caption{The subdivision $\Gamma_{X'/S} \to \Gamma_{X/S}$ for $n=5$.}\label{graph}
\end{figure}

The surface $X'$ is regular, so the closure in $X'$ of any Cartier divisor on $X_\eta$ is Cartier and $\Phi_{X'/S}$ is surjective. Still, $\on{Pic}_{X'/S}$ is not a proper model for $\on{Pic}_{X_\eta/\eta}$ since $\Phi_{X'/S}$ is not injective: its kernel is generated by the line bundles $\ca O(D_i)$.
\end{example}

    \ref{example:nonproper_pic_of_elliptic_degeneration} suggests that one might obtain a functor with the valuative criterion for properness by taking a nodal curve $X/S$ to a limit of appropriate quotients of $\on{Pic}_{X'/S}$ for a well-behaved system of blowups $X' \to X$. This is essentially what the logarithmic Picard group is, as we will see in \ref{Pic and LogPic}. In order to make this idea work formally and produce a genuine moduli functor, we need to work with \emph{log schemes}, which we will discuss in \ref{log sch etc}. Log Picard groups differ from $\on{Pic}$ in a way that is explicitly controlled by the combinatorics of dual graphs. First, we illustrate the relevant ideas in the nice setting of degenerations over affine toric varieties.

\subsubsection{Families over toric varieties}

Log schemes are essentially schemes equipped with local maps to affine toric varieties which are compatible modulo the torus action. Here, we briefly discuss families of curves and their Picard groups over affine toric varieties.

\begin{remark}\label{remark:monoids_of_TVs_are_integral_saturated}
    Not-necessarily-normal flat families of toric varieties over a scheme $S$ are obtained from such families over $\bb Z$ by base change. Let $X$ be an affine, not-necessarily-normal toric variety over $\bb Z$. Let $P$ be the monoid consisting of those characters of the torus of $X$ which extend to rational functions on $X$. Then $X \iso \on{Spec} \bb Z[P]$, the character lattice is the groupification $P^{gp}$, and $P$ is finitely generated, cancellative and torsion-free. The toric variety $X$ is normal if and only if $P$ is \emph{saturated}, i.e. $P=P\otimes_{\bb N} \bb Q_{\geq 0} \cap P^{gp}$ (\cite[Theorem 1.3.5]{cox2011toric}).
\end{remark}

From now on, if $X$ is an affine toric variety over a ring $R$, when we write $X=\on{Spec}R[P]$ we always mean that $P$ is the cancellative, torsion-free, saturated monoid of characters of the torus which extend to rational functions on $X$. Such monoids are called \emph{fine and saturated}, or \emph{fs} for short.

\begin{example}\label{example:logpic_over_affine_TV}
    Let $S=\on{Spec} \bb C[P]$ be an affine toric variety over $\on{Spec} \bb C$. Suppose that $P$ has no nontrivial units, so that the unique closed torus orbit of $S$ is a point $s$. Let $U \subset S$ be the dense torus and $X/S$ be a prestable curve, smooth over $U$. Denote by $n$ the rank of $P^{gp}$. The restriction
    \[
    \Phi_{X/S} \colon \on{Pic}_{X/S}(S) \to \on{Pic}_{X/S}(U)
    \]
    is neither surjective nor injective in general (see \ref{example:nonproper_pic_of_elliptic_degeneration}) and we have graph-theoretic combinatorial descriptions for its kernel and cokernel. Denote by $\Gamma_{X/S}$ the dual graph of $X_s$. For each edge $e$ of $\Gamma_{X/S}$, the local equation for $X$ at the corresponding node of $X_s$ is $uv=t$ for some $t\in P$ where $u,v$ are parameters for the branches of $X_s$. We metrize $\Gamma_{X/S}$ by giving length $t$ to the edge $e$.

    The kernel of $\Phi_{X/S}$ consists of line bundles attached to Cartier divisors which do not meet $X_U$. Weil divisors $D$ on $X$ which do not meet $X_U$ are in natural bijection with $P^{gp}$-valued functions $f_D$ on the vertices of $\Gamma_{X/S}$. $D$ is Cartier if and only if $f_D$ is \emph{piecewise-linear}, i.e. the values of $f_D$ at the endpoints of any edge $e$ differ by an integer multiple of the length of $e$. Thus, the kernel of $\Phi$ is the image of a natural map $\on{PL}(X/S) \to \on{Pic}_{X/S}(S)$ where $\on{PL}(X/S)$ is the group of piecewise-linear functions.
    
    The failure of surjectivity of $\Phi$ can be described combinatorially as well: we may find a diagram
    \[
    \begin{tikzcd}
        X' \arrow[r] & X_{S'} \arrow[r]\arrow[d] & S'\arrow[d] \\
        & X \arrow[r] & S
    \end{tikzcd}
    \]
    where the square is cartesian, $S' \to S$ is a toric alteration\footnote{A toric alteration is a composition of a toric blowup with a finite toric map induced by a finite-index extension of the cocharacter lattice $P^{gp,\vee}:=\on{Hom}(P,\bb Z)$.}, $X' \to X_{S'}$ is a toric blow-up and $X',S'$ are regular schemes. Then $\Phi_{X'/S'}$ is surjective so we may understand the cokernel of $\Phi_{X/S}$ in terms of the combinatorics of the subdivision of dual graphs $\Gamma_{X'/S'} \to \Gamma_{X_{S'}/S'}$ and of the extension of cocharacter lattices $\on{Cochar}(S') \to \on{Cochar}(S) = \on{Hom}(P,\bb Z)$. We omit the details, since we are about to treat the more general case where $S$ is an arbitrary log scheme.
\end{example}

\subsection{Log schemes}\label{log sch etc}

In the theory of models and degenerations, it is often natural to want to work with pairs $(\ul X,D)$ of a scheme with a divisor on it and ask that morphisms respect the distinguished divisors. If one is also interested in moduli problems, this can be inconvenient since the pull-back of a divisor is not necessarily a divisor. A more flexible approach is to work with \emph{log structures} instead.

Rougly, a log structure is the data of local maps to toric varieties defined modulo the torus action. Any divisor uniquely induces a log structure, but log structures can be pushed forward and pulled back. This often gives rise to well-behaved moduli functors of schemes with log structures, which have no well-behaved scheme-theoretic counterpart.

In this section, we recall the basics of log schemes and of the useful topologies on them. For a more comprehensive introduction, we refer to \cite{Ogus2018Lectures} and the original \cite{Kato} .

\begin{definition}
    A \emph{prelog structure} on a scheme $\ul X$ is a morphism $\alpha_X\colon M_X \to \ca O_X$ of \'etale sheaves of monoids\footnote{The monoid law on $\ca O_X$ is multiplication}. It is a \emph{log structure} if $\alpha_X^{-1}\ca O_X^\times \simeq {\ca O}_X^\times$. A prelog structure naturally induces a log structure. The category of \emph{schemes with log structure} has as objects the pairs $X=(\ul X,\alpha_X)$. A morphism $X \to Y$ is a scheme map $f \colon \ul X \to \ul Y$ together with a commutative square
\[
\begin{tikzcd}
f^*M_Y \arrow{r}{f^*\alpha_Y} \arrow[swap]{d}{} & f^*\mathcal{O}_Y\arrow{d}{} \\
M_X  \arrow{r}{\alpha_X} & \mathcal{O}_X
\end{tikzcd}
\]
of sheaves of monoids on $\ul X$. We often write $X=(\ul X,M_X)$ instead of $X=(\ul X,\alpha_X)$.
\end{definition}

\begin{example}\label{trivial_log}
    The \emph{trivial log structure} on a scheme $\ul X$ is $\ca O_X^\times \hra \ca O_X$. 
\end{example}

\begin{example}\label{example:divisorial_log_structure}
    Let $\ul X$ be a scheme and $D$ a divisor on $X$. The \emph{divisorial log structure} induced by $D$ on $\ul X$ is the subsheaf $\alpha \colon M_X \to \ca O_X$ consisting of rational functions which are invertible on $\ul X \backslash D$. We always implicitly equip toric varieties with the divisorial log structure from the toric boundary.
\end{example}

Recall that affine toric varieties are dual to fs monoids by \ref{remark:monoids_of_TVs_are_integral_saturated}.

\begin{example}\label{example:log_point}
    Let $\o P$ be a sharp\footnote{A monoid is called sharp when its only invertible element is $0$.} fs monoid and $k$ be a field. The \emph{log point} $(\Spec k,\o P)$ is the scheme $\Spec k$ equipped with the log structure
    \begin{align*}
   k^\times \times \o P & \to k \\  
(\lambda,m) & \mapsto \left\{
    \begin{array}{ll}
        \lambda & \mbox{if } m=0 \\
        0 & \mbox{otherwise.}
    \end{array}
\right.
\end{align*}

   
    If $k$ is separably closed, we say $(\Spec k,\o P)$ is a \emph{geometric log point}.
\end{example}

\begin{definition}
    A morphism $f\colon X \to Y$ of schemes with log structures is \emph{strict} if $f^*M_Y = M_X$. A (global) \emph{chart} for $X$ is a strict morphism to an affine toric variety over $\bb Z$. A \emph{log scheme} is a scheme with log structure which admits charts \'etale-locally. We denote by $\cat{LSch}$ the category of log schemes.
\end{definition}

\begin{remark}
    Log points are log schemes. A log point $(\on{Spec} k, \o P)$ can be seen as the origin of the affine $k$-toric variety $\on{Spec}k[\o P]$, with pullback log structure: we are forcing the point to remember some information about the local structure of the toric boundary.
\end{remark}

We will flexibly use the underline notation for either schemes or the underlying scheme functor $\cat{LSch} \to \cat{Sch}$. The log structure on a log scheme $X$ will always be denoted by $M_X \to \ca O_X$. We write $\o M_X$ for the characteristic (sheaf of) monoid(s) $M_X/\ca O_X^\times$.

\subsubsection{Topologies on log schemes}




\begin{definition}[log modifications]
    Let $X$ be a log scheme which admits a global chart $X \to \Spec \bb Z[P]$. Let $I\subset P$ be an ideal. The \emph{log blow-up} of $X$ in $I$ is the morphism $f \colon Y \to X$ obtained by blowing up $I$ inside $\Spec \bb Z[P]$, pulling back to $X$, and saturating the result. This construction is compatible with \'etale localization, so when $X$ does not necessarily have a global chart and $I\subset M_X$ is a sheaf of ideals, we define similarly the log blow-up of $X$ in $I$. Similarly, a \emph{log modification} is a morphism of log schemes which is locally pulled back from a toric modification of charts.
\end{definition}

\begin{remark}
    Log blow-ups are log modifications. Any log modification is dominated by a log blow-up as a consequence of the toric Chow lemma \cite[Lemma 6.9.2.]{Danilov1978GeometryToricVar}.
\end{remark}

\begin{definition}[root stacks]
    Let $X$ be a log scheme with a global chart $X \to \Spec \bb Z[P]$ and let $P \to Q$ be a finite index and injective monoid map. The \emph{root stack} $f \colon Y \to X$ with respect to $P \to Q$ is obtained as follows. Let $\overline Y=X \times_{\Spec \bb Z[P]} \Spec \bb Z[Q]$. Then $Y$ is the quotient stack $[\overline Y/K]$, where $K$ is the kernel of the map of tori $\Spec \bb Z[Q^{gp}] \to \Spec \bb Z[P^{gp}]$. We give $Y$ the strict log structure over $\Spec \bb Z[Q]$. This construction globalizes, so when $X$ has no global chart we may still define the root stack along an injective map $\o{M}_X \to \ca F$ of sheaves of monoids which is locally of finite index. 
\end{definition}

\begin{example}
    If $X=\Spec \bb C[x,y]$ with log structure given by $\bb C^\times x^{\bb N}$, we have a global chart $X \to \Spec \bb Z[\bb N]$ where $1\in \bb N$ maps to $x$. The root stack $Z \to X$ along the monoid map $\bb N \to \frac{1}{2}\bb N$ is the quotient stack of $\Spec \bb C[x,y,z]/(z^2-x)=\bb A^2_{y,z}$ by $\mu_2$, which acts by multiplication on the $z$ coordinate. In other words, $Z$ is obtained from $X$ by adjoining to $x$ a \emph{canonical} square root.
\end{example}

\begin{remark}
    Log modifications and root stacks are simultaneously monomorphisms and epimorphisms in $\cat{LSch}$.
\end{remark}

\begin{definition}\label{definition:topologies_on_LSch}
    We define the following topologies on $\cat{LSch}$.
    \begin{enumerate}
        \item For $\tau\in\{\text{\'etale, smooth, fppf}\}$, the \emph{strict $\tau$ topology}, which we still denote by $\tau$, consists of the strict $\tau$ coverings.
        \item The \emph{Kummer log flat} (klf) topology is generated by strict fppf coverings and root stacks.
        \item The \emph{log \'etale} topology is generated by strict \'etale coverings, log modifications, and root stacks of local index invertible on the target. The \textit{Kummer log étale} topology is generated by strict étale covererings and root stacks of local index invertible on the target. 
    \end{enumerate}
\end{definition}

\begin{remark}
    In \cite{Kato} and other standard references, the klf and log \'etale topology are defined in terms of chart criteria. The equivalence with our definition is \cite[Corollary 3.6]{illusie2002overview} for the klf topology. For the log \'etale topology, it is stated in \cite[\S 9.1]{illusie2002overview} and proved in \cite[Proposition 3.9]{Nakayama}.
\end{remark}


\begin{example}\label{Klflat_morphism}
    Consider an extension of discrete valuation rings
    \[
    A \to A[\sqrt[n]{\pi}]=:B,
    \]
    where $\pi$ is a uniformizer for $A$ and $n$ is an integer invertible in $A$. Endow both $\Spec A$ and $\Spec B$ with their divisorial log structures. The morphism $f \colon \Spec B \to \Spec A$ is Kummer log flat (actually, Kummer log étale). Indeed, $f$ has a \emph{cartesian} global chart, dual to the cocartesian square of monoids
    \begin{equation*}
        \xymatrix{
        \bb N \ar[d]^{j} \ar[r]^{g} & \bb N\ar[d]^{k} \\
        A \ar@{^{(}->}[r] & B \\}.
    \end{equation*}
    Here $j,k$ respectively map $1$ to $\pi,\sqrt[n]{\pi}$ and $g$ is multiplication by $n$. Note that the underlying scheme map $\ul f$, while not étale, is tamely ramified.
\end{example}

\subsubsection{Curves and regularity}
Smoothness and regularity have natural analogues in $\cat{LSch}$.
\begin{definition}[log smoothness, log regularity]
A morphism of log schemes is \emph{log smooth} if it satisfies the infinitesimal lifting criterion (\cite[\S 3.3]{Kato}) and is locally finitely presented. Following \cite[Definition 2.1]{KatoToric}, if $X=(\underline{X},M_X)$ is a log scheme, $x\in\ul X$ a point and $I(x,M_X)$ the ideal of $\mathcal{O}_{X,x}$ generated by $M_{X,x} \backslash \mathcal{O}_{X,x}^{\times}$, we say that $X$ is \emph{log regular at $x$} if
\begin{itemize}
    \item $\mathcal{O}_{X,x}/I(x, M_X)$ is a regular local ring.
    \item $\mathrm{dim} (\mathcal{O}_{X,x}) = \mathrm{dim} (\mathcal{O}_{X,x}/I(x, M_X)) + \mathrm{rank}(M_{X,x}^{gp}/\mathcal{O}_{X,x}^{\times})$.
\end{itemize}
We say that $X$ is \emph{log regular} if it is log regular at every point.
\end{definition}

\begin{example}
 A regular scheme endowed with the log structure induced by an snc divisor is log regular. A toric variety with log structure induced by the toric boundary is log regular.
\end{example}

\begin{remark}
     A log regular log scheme has a log \'etale covering by regular schemes whose log structure is that of an snc divisor (cf. \cite[\S 5.2]{Niziol}). This generalizes the fact that toric varieties admit resolutions of singularities via log blow-ups.
\end{remark}

\begin{remark}
    Log smoothness is preserved by base change. It can be characterized in terms of charts, in a similar way to the topologies of \ref{definition:topologies_on_LSch}. As was the case for regular schemes and smooth maps, any log scheme which admits a log smooth map to a log regular target is log regular itself.
\end{remark}

Prestable curves can be equipped with natural log structures, as follows.
\begin{definition}\label{definition:log_curve}
    A log scheme map $f \colon X \to S$ is a \emph{log curve} if
    \begin{itemize}
        \item The underlying scheme map $\ul X \to \ul S$ is a prestable curve.
        \item $f$ is strict away from the nodes.
        \item For each strict geometric point $s \to S$ and node $x\in X_s$, we have
        \[
        \o M_{X,x}=(\o M_{S,s} \oplus \bb N\alpha \oplus \bb N\beta)/(\alpha+\beta=\delta)
        \]
        for some $\delta\in \o M_{S,s}$. The images of $\alpha,\beta$ in $\o{\ca O}_{X_s,x}$ cut out the two branches of $X_s$ at $x$. We call $\delta$ the \emph{smoothing parameter} of $f$ at $x$.
    \end{itemize}
\end{definition}

\begin{remark}
    By \cite[\S 1]{Kato2000Log-smooth-defo}, log curves are log smooth.
\end{remark}



\subsubsection{Logarithmic and tropical multiplicative groups}

\begin{definition}
    The \emph{log multiplicative group} is the functor
    \begin{align*}
        \bb G_m^{log} \colon \cat{LSch}^{op} & \to \cat{Ab} \\
        T & \mapsto M_T^{gp}.
    \end{align*}
    The \emph{tropical multiplicative group} is the functor
    \begin{align*}
        \bb G_m^{trop} \colon \cat{LSch}^{op} & \to \cat{Ab} \\
        T & \mapsto \o M_T^{gp}.
    \end{align*}
    Both are sheaves in the strict \'etale topology. By \cite[Theorem 3.2]{katoLogStructuresII}), $\bb G_m^{log}$ is even a klf sheaf. If $X$ is a log scheme, we denote by $\bb G_{m,X}^{log}$ the restriction of $\bb G_m^{log}$ to $(\cat{LSch}/X)^{\et}$, and similarly for $\bb G_{m,X}^{trop}$.
\end{definition}

\begin{remark}
    For any log scheme $X$, there is an exact sequence of sheaves of abelian groups on $(\cat{LSch}/X)^{\et}$
    \begin{equation}\label{eqn:fundamental_exact_seq}
        0 \to \bb G_{m,X} \to \bb G_{m,X}^{log} \to \bb G_{m,X}^{trop} \to 0.
    \end{equation}
\end{remark}

\subsection{Tropical and logarithmic Jacobians}\label{section:trop_log_jacobians}

The universal Jacobian $\ca J_g = \Pic^0_{\ca C_g/\ca M_g}$ parametrizes smooth, genus $g$ curves together with an isomorphism class of degree $0$ line bundles on them. It is an abelian variety over $\ca M_g$, but it is not proper since $\ca M_g$ is not. To compactify, one can try to find a natural proper model of $\ca J_g$ over $\o {\ca M}_g$. Many such models have been constructed (see \ref{Prestable} for a sample of references), but none of them is a smooth and proper group over $\o{\ca M}_g$: it can be proven that no such model exists!

On the other hand, over $\ca M_g^{log}$ there is a modular, log smooth, proper group model of $\ca J_g$, the \emph{logarithmic Jacobian} (\emph{log Jacobian} for short) of \cite{molcho_wise_2022}. An intuitive explanation is that when a log curve $X/S$ fails to be smooth over a dense open, the log structure plays the role of an infinitesimal deformation of $\ul X \to \ul S$ to a family of smooth curves. Sections of the log Jacobian play the role of degree $0$ classes of line bundles on the smooth fibres of that deformation. In this section, we collect the properties of the log Jacobian which are relevant to this paper.

\subsubsection{Graphs and tropicalization}

The log Jacobian will be an extension by $\Pic^0$ of a combinatorial object, the \emph{tropical Jacobian} which can be understood in terms of graph homology. We will now discuss tropical Jacobians somewhat informally. We refer to \cite{molcho_wise_2022} for the details and rigorous proofs. Our graphs are allowed multiple edges.

\begin{definition}\label{dualgr}
    A prestable curve $\ul X$ over an algebraically closed field $k$ has a \emph{dual graph} $\Gamma$, whose vertices are the irreducible components of $\ul X$ and whose edges are the nodes. When $\ul X \to \Spec k$ is the underlying scheme of a log curve $X \to (\Spec k,\o P)$ for some sharp fs monoid $\o P$ (see \ref{example:log_point}), we may decorate the edges of the dual graph of $\ul X$ with the smoothing parameters of the corresponding nodes (see \ref{definition:log_curve}). The resulting metric graph, metrized by $\o P$, is called the \emph{dual graph} (or \emph{tropicalization}) of the log curve.
\end{definition}

If $X/S$ is a log curve and $s,t$ are two strict geometric points of $S$ with an \'etale specialization $s \to \Spec \ca O_{S,t}^{\et}$, then the dual graph of $X_s$ is obtained from that of $X_t$ by composing edge lengths with $\o M_{S,t} \to \o M_{S,s}$ and contracting the edges whose length becomes $0$. This allows us to define the \emph{tropicalization} $\ca G_{X/S}$ of $X/S$ (\ref{definition:tropicalization_log_curve}) as a sheaf of combinatorial objects over $S$, which locally are metric graphs metrized by $\o M_S$.

\begin{proposition}\label{proposition:tropicalizing_log_curves}
    Let $X/S$ be a log curve. There exist an \'etale algebraic space $\ca V_{X/S}$ over $\ul S$, a finite and unramified algebraic space $\ca H_{X/S}$ over $\ul S$, a morphism $r_{X/S} \colon \ca H_{X/S} \to \ca V_{X/S}$, an involution $i_{X/S}$ on $\ca H_{X/S}$ and a length map $\ell_{X/S} \colon \ca H_{X/S} \to \o M_S\backslash\{0\}$ such that for all strict geometric points $s \to S$
    \begin{itemize}
        \item $\ca V_{X/S}(s)$ is the set of vertices of $\Gamma_s$.
        \item $\ca H_{X/S}(s)$ is the set of half-edges (or oriented edges) of $\Gamma_s$ and $i_{X/S}$ is the ``opposite orientation" involution.
        \item $r_{X/S}(s)$ takes a half-edge to its source.
        \item $\ell_{X/S}(s)$ is the metric on $\Gamma_s$ by $\o M_{S,s}\backslash\{0\}$.
    \end{itemize}
    The \'etale stalks of $\ca G$ are its geometric fibres, and the restriction maps between them are the edge contractions of dual graphs.
\end{proposition}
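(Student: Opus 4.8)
The plan is to build the five objects one at a time, reading the vertices off the smooth locus and the edges off the singular locus of the underlying map $\pi \colon \ul X \to \ul S$, and then to match the fibrewise and specialization statements against the étale-local structure of a node provided by \ref{definition:log_curve}. Throughout I will exploit that the required outputs are canonical, so that étale-local constructions glue, and that the two halves of the dual graph behave very differently under specialization: edges \emph{appear} at special points (the objects carrying them are finite unramified but not flat), while vertices \emph{merge} at special points (the object carrying them must be étale but, crucially, non-separated).

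First I would construct the edges and half-edges. Let $Z \subset \ul X$ be the non-smooth locus of $\pi$. By log smoothness and \ref{definition:log_curve}, a node has étale-local form $uv=\delta$ for a local section $\delta$ lifting the smoothing parameter, so $Z$ is cut out by $u=v=0$ and is, étale-locally on $\ul X$, the closed subscheme $V(\delta) \hra \ul S$; hence $Z \to \ul S$ is unramified, and it is finite since $Z$ is closed in $\ul X$ (proper over $\ul S$) and quasi-finite over $\ul S$. This $Z$ parametrizes the nodes, i.e. the edges. The two branches $\{u=0\},\{v=0\}$ assemble into a canonical degree-two finite étale cover $\ca H_{X/S} \to Z$, so $\ca H_{X/S} \to \ul S$ is finite and unramified; the involution $i_{X/S}$ is the deck transformation exchanging branches, and the length map $\ell_{X/S}$ sends a node to its smoothing parameter $\delta$. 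Since $\delta \neq 0$ in $\o M_{S,s}$ at an actual node, this is a section of $\o M_S\backslash\{0\}$ over $Z$, which I pull back to $\ca H_{X/S}$.

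Next I would construct the vertices as the relative connected components $\ca V_{X/S}:=\pi_0(\ul X^{\mathrm{sm}}/\ul S)$ of the smooth locus. Over a strict geometric point $s$, each irreducible component of $X_s$ with its nodes deleted stays connected while distinct components become disconnected, so $\pi_0(X^{\mathrm{sm}}_s)$ is exactly the vertex set of $\Gamma_s$. The step I expect to require the most care is representability of this relative $\pi_0$ by an algebraic space étale over $\ul S$: since $\ul X^{\mathrm{sm}} \to \ul S$ is smooth, finitely presented, and has fibres with finitely many connected components, I would invoke representability of the sheaf of relative connected components for such morphisms. The key subtlety is that $\ca V_{X/S}$ is in general \emph{non-separated}: when a node smooths, the two components it separates merge, so two sheets of $\ca V_{X/S}$ that are distinct over $Z$ get glued over $\ul X^{\mathrm{sm}}$, exactly as for the affine line with doubled origin. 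This is what allows an étale space to have \emph{more} geometric points over a special point than over a generic one, and it is precisely the mechanism that will realize edge contraction on vertices. The source map $r_{X/S}\colon \ca H_{X/S} \to \ca V_{X/S}$ sends a branch to the component of $\ul X^{\mathrm{sm}}$ it borders (the component whose closure in $\ul X$ contains the node along that branch); this is canonical and compatible with étale localization, hence defines a morphism, e.g. through the tautological map $\ul X^{\mathrm{sm}} \to \ca V_{X/S}$.

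Finally I would verify the fibrewise identifications, which hold by construction, and the specialization statement. Given an étale specialization $s \to \Spec \ca O^{\et}_{S,t}$, a node survives at $s$ exactly when its smoothing parameter stays nonzero under $\o M_{S,t} \to \o M_{S,s}$; nodes with $\delta \mapsto 0$ drop out of $Z$ and $\ca H_{X/S}$, matching the deletion of the contracted edges and the composition of $\ell_{X/S}$ with $\o M_{S,t} \to \o M_{S,s}$. On vertices, the non-separated structure of $\ca V_{X/S}$ makes the generization map $\ca V_{X/S,t} \to \ca V_{X/S,s}$ surjective, collapsing the endpoints of each contracted edge, so that $\Gamma_s$ is the edge contraction of $\Gamma_t$. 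The main obstacle throughout is the correct treatment of $\ca V_{X/S}$: establishing representability by a (necessarily non-separated) étale algebraic space, and checking that its generization maps implement vertex merging, against the naive but false expectation that an étale space must have larger generic fibres.
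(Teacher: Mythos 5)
Your proposal is correct and is essentially the argument the paper relies on: the paper gives no proof of its own, deferring to \cite[Section 3]{HMOP}, where the tropicalization is constructed exactly as you describe --- $\ca V_{X/S}$ as the relative $\pi_0$ of the smooth locus (an \'etale, generally non-separated algebraic space, via the representability theorem for relative $\pi_0$ of smooth, finitely presented morphisms), $\ca H_{X/S}$ as the finite \'etale double cover of branches over the finite unramified singular locus, $\ell_{X/S}$ given by the smoothing parameters, with the fibrewise identifications and edge-contraction behaviour under \'etale specialization verified by the same local computation at a node $uv=t$. The one point to phrase carefully in a full write-up is the final ``stalks are geometric fibres'' claim: the representable \'etale sheaf $\Hom_S(-,\ca H_{X/S})$ has stalk at $t$ only the nodes persisting over all of $\Spec \ca O_{S,t}^{\et}$, not all of $\ca H_{X/S}(t)$, so that claim must be read --- as your last paragraph implicitly does --- in terms of geometric fibres, with generization maps on $\ca V_{X/S}$ (vertex merging) and inclusions of surviving nodes into $\ca H_{X/S}(t)$ (edge deletion) jointly realizing the edge contraction.
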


\begin{proof}
    See \cite[Section 3]{HMOP}.
\end{proof}

\begin{definition}\label{definition:tropicalization_log_curve}
    Keeping the hypotheses and notations of \ref{proposition:tropicalizing_log_curves} and omitting the subscripts whenever convenient, we call
    \[
    \ca G_{X/S}:=(\ca V_{X/S},\ca H_{X/S},r \colon \ca H_{X/S} \to \ca V_{X/S},i_{X/S},\ell_{X/S})
    \]
    the \emph{tropicalization}, or \emph{sheaf of dual graphs} of $X/S$. We call $\ca V$ its \emph{sheaf of vertices}, $\ca H$ its \emph{sheaf of half-edges}, $i$ its \emph{opposite half-edge involution}, $r$ its \emph{origin map} and $\ell$ its \emph{length map}.
\end{definition}

\begin{definition}\label{definition:sheaf_of_homologies_and_intersection_pairing}
    Let $X/S$ be a log curve. Denote by $\ca Z^{\ca E}_{X/S}$ the sheaf of $(-i_{X/S})$-invariant maps $\ca H_{X/S} \to \bb Z$, which we think of as the sheaf of slopes along the edges of $\ca G_{X/S}$. There is a natural boundary map $\delta_{X/S} \colon \ca Z^{\ca E}_{X/S} \to \bb Z[{\ca V}_{X/S}]$ whose geometric fibres are the usual boundary maps ``endpoint minus origin" in graph homology. The kernel of $\delta_{X/S}$ is the \emph{sheaf of first Betti homologies} $\ca H_{1,X/S}$. It comes with a nondegenerate bilinear \emph{intersection pairing}
    \[
    \ca H_{1,X/S} \times_S \ca H_{1,X/S} \to \bb G_{m,S}^{trop}
    \]
    inherited from the pairing on $\bb Z^{\mathcal H_{X/S}}$ given on characteristic functions of half-edges $h,h'$ by
    \begin{itemize}
        \item $\mathbbm 1_h.\mathbbm 1_{h'}=\ell(h)$ if $h=h'$;
        \item $\mathbbm 1_h.\mathbbm 1_{h'}=-\ell(h)$ if $h=i(h')$;
        \item $\mathbbm 1_h.\mathbbm 1_{h'}=0$ otherwise.
    \end{itemize}
\end{definition}

The tropical Jacobian can then be defined in terms of graph homology as follows.

\begin{definition}\label{definition:tropical_jacobian_bounded_monodromy}
    The \emph{tropical Jacobian} of a log curve $X/S$ is the sheaf
    \[
    \on{TroJac}_{X/S}:=\sheafhom(\ca H_{1,X/S},\bb G_{m,S}^{trop})^\dagger/\ca H_{1,X/S},
    \]
    where $\ca H_{1,X/S}$ maps to $\sheafhom(\ca H_{1,X/S},\bb G_{m,S}^{trop})^\dagger$ via the intersection pairing of \ref{definition:sheaf_of_homologies_and_intersection_pairing}.
    
    Here, the $\dagger$ superscript denotes the subgroup of $\sheafhom(\ca H_{1,X/S},\bb G_{m,S}^{trop})$ consisting of those maps $f \colon \ca H_{1,X/S}(T) \to \o M_T^{gp}$ with \emph{bounded monodromy}. This means that for any cycle $\gamma\in \ca H_{1,X/S}(T)$ there exists a large enough integer $N$ such that
    \[
    -N\ell(\gamma)\leq f(\gamma) \leq N\ell(\gamma)
    \]
    for the partial monoidal order on $\o M_T^{gp}$ induced by $\o M_T$.
\end{definition}

The logarithmic Jacobian will be an extension of the multidegree $0$ subgroup of $\on{Pic}$ by $\on{TroJac}$ (cf. \ref{theorem:representability_and_properties_logjac}).

\begin{example}\label{example:global_sections_trojac_of_elliptic_degen_curve_over_dvr}
    In \ref{example:nonproper_pic_of_elliptic_degeneration}, we have
    \[
    \on{TroJac}_{X/S}(S) = \on{TroJac}_{X'/S}(S) = \bb Z/n\bb Z.
    \]
    Notice that the bounded monodromy condition is automatic on global sections given that $\o M_S=\bb N$ is archimedean. There is a natural map 
    \begin{equation}\label{trewpwf}
        \on{Pic}^{deg=0}_{X'/S}(S) \to \on{TroJac}_{X/S}(S),
    \end{equation}
    taking a line bundle $\ca L$ to $\sum_{i\in \bb Z/n\bb Z} d_i i$ where $d_i$ is the degree of $\ca L$ on the component $D_i$. As expected, piecewise-linear functions are in the kernel of \ref{trewpwf} since for each $i$ the line bundle $\ca O(D_i)$ has degree is $-2$ on $D_i$, $1$ on $D_{i-1}$ and $D_{i+1}$ and $0$ on the other $D_j$.
\end{example}

\subsubsection{Log Jacobians}\label{section:log_lbs}

Our interest in tropicalizations and their sheaf cohomology comes from the fact that they relate to the characteristic monoids of log curves. In essence, this was originally observed by Gross and Siebert \cite[Section 1.4]{GrossSiebert}.

The idea is roughly the following. On a log curve $X/S$, the sheaf $\o M^{gp}$ is locally constant on the smooth locus, so it is determined by its values at nodes and one value per component. In other words, it comes from a sheaf on the dual graph $\ca G_{X/S}$ in some appropriate sense, and this sheaf on $\ca G_{X/S}$ turns out to be piecewise-linear functions. In particular, we have

\begin{lemma}
    Let $X/S$ be a log curve. Recall from \ref{definition:sheaf_of_homologies_and_intersection_pairing} the definitions of the sheaves $\ca Z^{\ca E}$, $\ca H_{1,X/S}$ and their intersection pairings. Then
    \[
    H^1(X,\o M_X^{gp}) = \sheafhom(\ca H_{1,X/S},\bb G_{m,S}^{trop})/\ca Z^{\ca E}(S)
    \]
    with $\ca Z^{\ca E}$ acting via intersection pairing.
\end{lemma}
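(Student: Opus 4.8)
The plan is to compute $H^1(X,\o M_X^{gp})=H^1(X,\bb G_{m,X}^{trop})$ by resolving $\o M_X^{gp}$ into a part pulled back from $S$ (which will produce the graph cohomology $\sheafhom(\ca H_{1,X/S},\bb G_{m,S}^{trop})$) and a part supported at the nodes (which will produce $\ZE$). First I would record the short exact sequence of strict étale sheaves on $X$
\[
0 \to \pi^{-1}\o M_S^{gp} \to \o M_X^{gp} \to \ca Q \to 0,
\]
where the left map comes from the structure morphism of the log scheme $X/S$. By \ref{definition:log_curve}, $\pi$ is strict away from the nodes, so this map is an isomorphism there and $\ca Q$ is supported on the nodal locus. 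At a node with smoothing parameter $\delta$, where $\o M_{X,x}^{gp}=(\o M_S^{gp}\oplus\ZZ\alpha\oplus\ZZ\beta)/(\alpha+\beta=\delta)$, the left map is the inclusion of the first summand (hence injective), and the stalk of $\ca Q$ is the free group $\ZZ$ generated by a branch parameter $\alpha$; since exchanging the two branches sends $\alpha\mapsto-\alpha$, the pushforward $\pi_*\ca Q$ is canonically identified with $\ZE$.

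Next I would apply $R\pi_*$. The sheaf $\ca Q$ is supported on the nodal locus, which is finite over $S$, so $R^{>0}\pi_*\ca Q=0$ and $\pi_*\ca Q=\ZE$. For the pulled-back term, I would first compute $R^1\pi_*\underline{\ZZ}$: its fibre over a strict geometric point $s$ is $H^1_{\et}(X_s,\ZZ)$, and since the components of $X_s$ have smooth proper normalizations — whose étale $H^1$ with torsion-free constant coefficients vanishes — this reduces to the graph cohomology $H^1(\Gamma_s,\ZZ)=\Hom(H_1(\Gamma_s,\ZZ),\ZZ)$. Globalizing over $S$ identifies $R^1\pi_*\underline\ZZ$ with $\sheafhom(\ca H_{1,X/S},\underline\ZZ)$; as $\ca H_{1,X/S}$ is finite locally free, the projection formula then gives $R^1\pi_*\pi^{-1}\o M_S^{gp}=\sheafhom(\ca H_{1,X/S},\bb G_{m,S}^{trop})$, while $\pi_*\pi^{-1}\o M_S^{gp}=\bb G_{m,S}^{trop}$ because the fibres are connected.

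Assembling the long exact sequence of $R\pi_*$ and using $R^1\pi_*\ca Q=0$, I would obtain
\[
0\to\bb G_{m,S}^{trop}\to\pi_*\o M_X^{gp}\to\ZE\xrightarrow{\partial}\sheafhom(\ca H_{1,X/S},\bb G_{m,S}^{trop})\to R^1\pi_*\o M_X^{gp}\to0.
\]
It then remains to identify the connecting map $\partial$ with the map induced by the intersection pairing of \ref{definition:sheaf_of_homologies_and_intersection_pairing}; granting this, $R^1\pi_*\o M_X^{gp}=\coker\partial=\sheafhom(\ca H_{1,X/S},\bb G_{m,S}^{trop})/\ZE$, which is the claim (and, taking sections over $S$, the stated form), while as a byproduct $\pi_*\o M_X^{gp}=\ker\partial$ recovers the group of piecewise-linear functions of \ref{example:logpic_over_affine_TV}.

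I expect the identification $\partial=\text{intersection pairing}$ to be the \emph{main obstacle}, since it is where the edge lengths $\ell(e)=\delta$ enter and must be matched to the normalization of the pairing. Concretely, I would compute $\partial$ by a Čech argument: a section $(n_e)$ of $\ZE$ lifts near each node $x_e$ to $n_e\alpha_e\in\o M_X^{gp}$, and its Čech coboundary, evaluated on a cycle $\gamma\in\ca H_{1,X/S}$, accumulates the total $\sum_e n_e\,\ell(e)\,\gamma_e$, which is exactly the pairing of $(n_e)$ with $\gamma$. A reassuring sanity check is \ref{example:global_sections_trojac_of_elliptic_degen_curve_over_dvr}, where $\Gamma$ is a single loop of length $n$, so $\partial$ is multiplication by $n$ and $\coker\partial=\ZZ/n\ZZ$, in agreement with the computation there.
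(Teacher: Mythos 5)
Your overall architecture --- splitting $\o M_X^{gp}$ into a part pulled back from $S$ and a part $\ca Q$ supported at the nodes, pushing forward, and identifying the connecting map with the intersection pairing --- is the right shape, and it mirrors in outline the computation of Molcho--Wise, which is all the paper itself invokes (its proof of this lemma is just a citation of \cite[Lemma 2.4.2.4 and Lemma 3.4.7]{molcho_wise_2022}). The pieces $\pi_*\ca Q = \ZE$, $R^{>0}\pi_*\ca Q = 0$ (pushforward along the finite nodal locus), and the \v{C}ech identification of $\partial$ with the pairing are all fine in spirit.

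The genuine gap is in the step you treat as routine: computing $R^1\pi_*\pi^{-1}\o M_S^{gp}$ by first identifying $R^1\pi_*\bb Z$ fibrewise and then applying the projection formula. Both of these general principles fail for non-torsion \'etale coefficients, and they fail exactly here. Take \ref{example:nonproper_pic_of_elliptic_degeneration} with $R$ strictly henselian: the total space $\ul X$ is integral and normal (regular when $n=1$), and on a normal integral scheme every $\bb Z$-torsor is trivial (a section over the generic point extends, by Zariski's main theorem applied to the component of the torsor containing it), so the stalk of $R^1\pi_*\bb Z$ at the closed point is $H^1_{\et}(\ul X,\bb Z)=0$, whereas $H^1_{\et}(X_{\bar s},\bb Z)=H^1(\Gamma_{\bar s},\bb Z)=\bb Z$; proper base change with $\bb Z$-coefficients is simply false. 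Feeding this into your projection formula would then give $(R^1\pi_*\pi^{-1}\o M_S^{gp})_{\bar s}=\o M_{S,\bar s}^{gp}\otimes 0=0$, whereas the value your long exact sequence needs --- and which is correct --- is $\Hom(H_1(\Gamma_{\bar s}),\o M_{S,\bar s}^{gp})\cong\bb Z$; note this is precisely what your own sanity check in the last paragraph presupposes, so the proposal is internally inconsistent. The true identification must use the actual structure of $\pi^{-1}\o M_S^{gp}$, which is far from constant: in the example it equals $i_*\bb Z_{X_s}$ for the closed immersion $i\colon X_s\hra \ul X$, whence $R^1\pi_*\pi^{-1}\o M_S^{gp}$ is the skyscraper $H^1_{\et}(X_{\bar s},\bb Z)\otimes\o M_{S,\bar s}^{gp}$ at the closed point, visibly not of the form $\o M_S^{gp}\otimes R^1\pi_*\bb Z$. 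Carrying this out in general (by d\'evissage along the strata of $\o M_S$, or via sheaves on the tropicalization) is exactly the nontrivial content of the two Molcho--Wise lemmas the paper cites. Two smaller points: $\ca H_{1,X/S}$ is not finite locally free --- its rank drops under generization whenever a loop is smoothed --- and passing from $R^1\pi_*$ to $H^1(X,-)$ and then to the stated quotient of global sections needs a Leray argument (vanishing of $H^1(S,\pi_*\o M_X^{gp})$, right-exactness of sections) that you do not address, though the paper's own formulation is admittedly loose on that point as well.
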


\begin{proof}
    This follows from \cite[Lemma 2.4.2.4 and Lemma 3.4.7]{molcho_wise_2022}.
\end{proof}

One can then consider the long exact sequence in cohomology attached to the exact sequence of sheaves
\[
0 \to \ca O_X^\times \to M_X^{gp} \to \o M_X^{gp} \to 0
\]
to relate $\on{Pic}$, the cohomology of $M_X^{gp}$, and the cohomology of some natural sheaves on $\ca G_{X/S}$ (such as piecewise-linear functions). This results in the following definition and theorem.

\begin{definition}\label{definition:log_jacobians}
    Let $X/S$ be a log curve. Recall from \ref{definition:tropical_jacobian_bounded_monodromy} the notion of bounded monodromy subgroups, denoted with $\dagger$ superscripts. The \emph{logarithmic Jacobian} $\Logjac_{X/S}$ of $X/S$ is the strict \'etale sheafification of the presheaf $(\cat{LSch}/S)^{op} \to \cat{Ab}$ taking $T$ to the \emph{bounded monodromy subgroup}
    \[
    (H^1(X_T,M_{X_T}^{gp}))^{\mathrm{deg}=0,\dagger} \subset (H^1(X_T,M_{X_T}^{gp}))^{\mathrm{deg}=0},
    \]
    defined as the preimage of
    \[
    \sheafhom(\ca H_{1,X/S},\bb G_{m,S}^{trop})^\dagger(T)/\ca Z^{\ca E}(T)
    \]
    under
    \[
    (H^1(X_T,M_{X_T}^{gp}))^{\mathrm{deg}=0} \to H^1(X_T,\o M_{X_T}^{gp}) = \sheafhom(\ca H_{1,X/S},\bb G_{m,S}^{trop})/\ca Z^{\ca E}(T). 
    \]
    We define a \emph{log line bundle of degree $0$} on $X/S$ to be a global section of $\on{LogJac}_{X/S}$. We will omit the subscripts $X/S$ when possible.
\end{definition}

\begin{remark}
    As the names suggest, the log Jacobian of a log curve $f \colon X \to S$ is the degree $0$ subgroup of a larger subgroup $\on{LogPic}_{X/S}$ of $R^1f_*\bb G_{m,X}^{log}$, whose sections are called \emph{log line bundles}. The degree $0$ part is sufficient for our purposes, so we do not define $\on{LogPic}$.
\end{remark}

\begin{remark}
     We work with sheaves of isomorphism classes rather than stacks of torsors to alleviate notation, not out of necessity.
\end{remark}

Logarithmic Jacobians enjoy the logarithmic analogues of the geometric properties of classical Jacobians of smooth curves, as shown by the following theorem.

\begin{theorem}[Molcho-Wise]\label{theorem:representability_and_properties_logjac}
    Let $X/S$ be a log curve. There is a natural exact sequence of abelian sheaves on $(\cat{LSch}/S)^{\et}$
    \begin{equation}\label{ExactLogPic}
        0 \to \Pic^0_{X/S} \to \Logjac_{X/S} \to \Trojac_{X/S} \to 0
    \end{equation}
    where $\Pic^{0}_{X/S}$ is given the $S$-strict log structure. The sheaf $\Logjac$ is log smooth and proper over $S$, has a log smooth cover by a log scheme, and its diagonal is representable by finite morphisms of log schemes. 
\end{theorem}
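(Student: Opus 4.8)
The plan is to obtain the sequence \ref{ExactLogPic} by pushing forward the fundamental exact sequence \ref{eqn:fundamental_exact_seq} along $f\colon X \to S$ and then carving out the degree-$0$ and bounded-monodromy parts dictated by \ref{definition:tropical_jacobian_bounded_monodromy} and \ref{definition:log_jacobians}. For a test log scheme $T/S$, the short exact sequence of \'etale sheaves on $X_T$
\[
0 \to \ca O_{X_T}^\times \to M_{X_T}^{gp} \to \o M_{X_T}^{gp} \to 0
\]
yields the segment
\[
H^0(X_T,\o M_{X_T}^{gp}) \xrightarrow{\partial} \Pic(X_T) \to H^1(X_T,M_{X_T}^{gp}) \to H^1(X_T,\o M_{X_T}^{gp}).
\]
The lemma computing $H^1(X_T,\o M_{X_T}^{gp})$ identifies the right-hand group with $\sheafhom(\ca H_{1,X/S},\bb G_{m,S}^{trop})/\ZE(T)$, the purely tropical object built from the dual graph. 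Restricting to total degree $0$ and to the bounded-monodromy locus $\sheafhom(\ca H_{1,X/S},\bb G_{m,S}^{trop})^\dagger$ turns this segment into the candidate sequence.

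For exactness I would argue as follows. The map $\Logjac_{X/S} \to \Trojac_{X/S}$ sends a degree-$0$ log line bundle to its tropicalization in $\sheafhom(\ca H_{1,X/S},\bb G_{m,S}^{trop})^\dagger/\ZE$; the multidegree data carried by the log line bundle provides the canonical lift from this quotient to $\Trojac=\sheafhom(\ca H_{1,X/S},\bb G_{m,S}^{trop})^\dagger/\ca H_{1,X/S}$, using that $\ZE/\ca H_{1,X/S}$ is exactly the lattice of multidegrees pinned down by the total-degree condition. Its kernel consists of the classes with trivial tropicalization, which by exactness of the pushed-forward sequence is the image of $\Pic(X_T)$; intersecting with degree $0$ and removing the image of $\partial$ identifies this kernel with $\Pic^0_{X/S}$, giving left-exactness. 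Right-exactness — surjectivity onto $\Trojac$ — is the assertion that every bounded-monodromy tropical class lifts, strict-\'etale-locally on $S$, to a degree-$0$ log line bundle; here one checks that the obstruction in the cokernel of $H^1(M_{X_T}^{gp}) \to H^1(\o M_{X_T}^{gp})$ dies after sheafification, the bounded-monodromy constraint being precisely what makes the lift exist.

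For the geometric statements I would proceed through subdivisions, as suggested by \ref{example:nonproper_pic_of_elliptic_degeneration}. Each log modification $X' \to X$ (a subdivision of the tropicalization) induces a map $\Pic^0_{X'/S} \to \Logjac_{X/S}$ whose image is the set of log line bundles that are \emph{linear} on $X'$; since every bounded-monodromy class becomes linear on a fine enough subdivision, these images jointly cover $\Logjac_{X/S}$, and the $\Pic^0_{X'/S}$ are representable and smooth, furnishing a log smooth cover by a log scheme. Log smoothness of $\Logjac$ then follows from that of the cover. For properness I would verify the valuative criterion over log traits: the tropical part $\Trojac$ extends because bounded monodromy confines sections to a bounded region of $\sheafhom(\ca H_{1,X/S},\bb G_{m,S}^{trop})$ modulo the cocompact lattice $\ca H_{1,X/S}$, making it ``compact'', while the $\Pic^0$ part extends once one passes to a regular model $X'$ on which the closure of a generic divisor is Cartier; the extension \ref{ExactLogPic} then glues these into a limiting log line bundle.

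Finally, representability of the diagonal by finite morphisms reduces, via \ref{ExactLogPic} and the representability of the diagonal of $\Pic^0$, to the tropical factor, whose diagonal is governed by the lattice $\ca H_{1,X/S}$ embedded through the intersection pairing; after fixing a subdivision this is a finite-index identification, hence finite. The step I expect to be the main obstacle is properness. The non-properness of $\Pic^0$ for nodal curves (\ref{example:nonproper_pic_of_elliptic_degeneration}) means the valuative criterion can succeed only because bounded monodromy supplies the missing limits, so the crux is to show, uniformly in families, that this combinatorial condition exactly compensates for the line bundles that fail to extend, and that the resulting limit is realized on some subdivision $X'$. Making this precise — and checking that the object so produced is an honest strict-\'etale sheaf admitting a log scheme cover rather than merely a stack — is the delicate heart of the construction, carried out in \cite{molcho_wise_2022}.
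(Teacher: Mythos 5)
This theorem is not proved in the paper at all: it is quoted from Molcho--Wise, and the paper's ``proof'' consists of four citations (the exact sequence is \cite[Theorem 4.14.6]{molcho_wise_2022}; log smoothness, properness, the log smooth cover and the diagonal are Theorem 4.13.1, Corollary 4.12.5, Corollary 4.11.4 and Theorem 4.12.1 there). Your proposal is therefore really a sketch of Molcho--Wise's internal arguments, and in broad strokes it does follow their strategy: push forward $0 \to \ca O_X^\times \to M_X^{gp} \to \o M_X^{gp} \to 0$, identify the tropical part, cover $\Logjac$ by Picard schemes of modified curves, and let bounded monodromy drive properness. Since you yourself defer ``the delicate heart'' back to \cite{molcho_wise_2022}, the proposal is a summary rather than a proof; judged as a proof attempt, it has three concrete problems.

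First, exactness at the middle term is argued by conflating $H^1(X_T,\o M_{X_T}^{gp})$ with $\Trojac$. The kernel of $\Logjac_{X/S} \to H^1(\cdot,\o M^{gp})$ is the image of \emph{all} of $\Pic$, whereas the kernel of $\Logjac_{X/S} \to \Trojac_{X/S}$ is only the image of the \emph{multidegree-zero} part $\Pic^0$: a line bundle of multidegree $(1,-1,0,\dots,0)$ on a cycle of rational curves has trivial image in $H^1(\o M^{gp})$ but nonzero image in $\Trojac \simeq \bb Z/n\bb Z$. Your ``canonical lift via multidegrees'' from the quotient by $\ZE$ to the quotient by $\ca H_{1,X/S}$ is exactly the construction of the tropicalization morphism, a substantial piece of \cite{molcho_wise_2022} that cannot be invoked in one line and then also be the thing that repairs this kernel computation. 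Second, your log smooth cover uses subdivisions $X' \to X$ over the \emph{same} base $S$; in general a bounded-monodromy class only becomes linear after log modifications and root stacks of $S$ itself (splitting an edge length $\delta$ requires writing it as a sum in $\o M_S$, which may not be possible), which is why in \ref{Pic and LogPic} the base change $S' \to S$ is a log modification of a root stack rather than the identity. Third, properness cannot be ``glued'' from the two ends of \ref{ExactLogPic}: the sequence does not split and $\Pic^0$ is not proper (\ref{example:nonproper_pic_of_elliptic_degeneration}), so an extension of a proper sheaf by it has no formal reason to be proper. The actual argument runs through regular models, closures of divisors, and descent of $\Logjac$ along root stacks and log modifications (\ref{proposition:descent_for_logjac}); this is precisely the step you flag as the obstacle and leave to Molcho--Wise.
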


\begin{proof}
    The exact sequence is \cite[Theorem 4.14.6]{molcho_wise_2022}. Log smoothness, properness, cover by a log scheme and representability of the diagonal for $\Logjac$ are respectively Theorem 4.13.1, Corollary 4.12.5, Corollary 4.11.4 and Theorem 4.12.1 of \cite{molcho_wise_2022}.
\end{proof}

\begin{example}
    In \ref{example:nonproper_pic_of_elliptic_degeneration}, suppose $R$ is strictly henselian. Let $s \to S$ be the closed point with strict log structure. We have seen in \ref{example:global_sections_trojac_of_elliptic_degen_curve_over_dvr} that $\on{TroJac}_{X/S}(S) \simeq \bb Z/n\bb Z$. Hence, we get from \ref{ExactLogPic} exact sequences of abelian groups
    \begin{align}
        0 \to \on{Pic}^0_{X/S}(S) \to & \Logjac_{X/S}(S) \to \bb Z/n\bb Z\to 0\label{1} \\
        0 \to \bb G_m(s) \to & \on{LogJac}_{X/S}(s) \to \bb Z/n\bb Z \to 0.\label{2}
    \end{align}
    Here, we got \ref{2} from \ref{1} by observing that $\on{Pic}^0_{X_S/S}(s)=\bb G_m(s)$ since
    \begin{itemize}
        \item The normalization $X_s^\nu$ of $X_s$ is $\bb P^1$.
        \item The kernel of the surjective pullback $\on{Pic}(X_s) \to \on{Pic}(X^\nu_s) = \bb Z$ is the isomorphism sheaf
        \[
        \ca Isom(\ca O_{X^\nu_s}|_p,\ca O_{X^\nu_s}|_q) = \bb G_m
        \]
        where $p,q\in X^\nu_s(s)$ are the two preimages of the node.
    \end{itemize}
    With the notations of \ref{example:nonproper_pic_of_elliptic_degeneration}, we have $\on{LogJac}_{X/S}=\on{LogJac}_{X'/S}$, and the kernel of the resulting map $\on{Pic}^{\mathrm{deg}=0}_{X'/S}(S) \to \on{LogJac}_{X/S}(S)$ is the subgroup generated by the line bundles $\ca O(D_i)$.
\end{example}
\subsubsection{Log \'etale descent, lifts to line bundles}\label{section:log_etale_descent}

The definition of the log Jacobian only makes it a sheaf in the strict \'etale topology, but it happens to have much stronger descent properties.

\begin{proposition}\label{proposition:descent_for_logjac}
    Let $X/S$ be a log curve. Then $\Logjac_{X/S}$ satisfies descent along strict fppf covers, root stacks, and those log modifications $T' \to T$ such that $T$ is log flat.
\end{proposition}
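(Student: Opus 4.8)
The plan is to deduce descent for $\Logjac_{X/S}$ from descent for the two outer terms of the exact sequence \ref{ExactLogPic}, namely $\Pic^0_{X/S}$ and $\Trojac_{X/S}$. Each of the three classes of covers in the statement generates, together with the strict \'etale covers, a topology $\tau$ refining the strict \'etale one, and the assertion is exactly that $\Logjac$ is a $\tau$-sheaf. The mechanism of reduction is that, inside the abelian category of strict \'etale sheaves, the full subcategory of $\tau$-sheaves is closed under extensions: comparing $0 \to \Pic^0_{X/S} \to \Logjac_{X/S} \to \Trojac_{X/S} \to 0$ with its image under the (exact) $\tau$-sheafification functor via the unit, and using that the units on $\Pic^0_{X/S}$ and $\Trojac_{X/S}$ are isomorphisms, forces the unit on $\Logjac_{X/S}$ to be an isomorphism. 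Concretely, for root stacks and log modifications — which are simultaneously monomorphisms and epimorphisms in $\cat{LSch}$, so that $T' \times_T T' \cong T'$ — the $\tau$-sheaf condition for a cover $p \colon T' \to T$ reduces to the single assertion that $F(T) \to F(T')$ is an isomorphism, and the reduction then becomes a clean five-lemma run on the long exact strict \'etale cohomology sequences of \ref{ExactLogPic} over $T$ and $T'$, provided $\Pic^0_{X/S}(T) \cong \Pic^0_{X/S}(T')$, $\Trojac_{X/S}(T) \cong \Trojac_{X/S}(T')$, and $H^1_{\et}(T,\Pic^0_{X/S}) \hookrightarrow H^1_{\et}(T',\Pic^0_{X/S})$. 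For strict fppf covers the Čech nerve is nontrivial, and there I would invoke the extension-closure principle directly (or compare descent spectral sequences), having established strict fppf descent for each building block.

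Next I would establish descent for $\Pic^0_{X/S}$. Along strict fppf covers this is classical: $\Pic_{X/S}$ is the fppf sheafification of the relative Picard functor and is representable by an algebraic space, hence satisfies fppf descent, and strict fppf covers are fppf covers of the underlying schemes with pulled-back log structure. For root stacks and log modifications the point is invariance: $\Pic^0_{X/S}$ carries the strict log structure and is pulled back from $\cat{Sch}$, so its descent reduces to the statement that neither relative line bundles of multidegree $0$ nor their automorphisms change under these covers. This is where the hypothesis that $T$ be log flat enters for log modifications: it guarantees that the underlying proper birational-type modification does not alter global units or the multidegree $0$ Picard group, and it is what controls the injectivity $H^1_{\et}(T,\Pic^0_{X/S}) \hookrightarrow H^1_{\et}(T',\Pic^0_{X/S})$ needed above.

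I would then treat $\Trojac_{X/S} = \sheafhom(\ca H_{1,X/S},\bb G_{m,S}^{trop})^\dagger/\ca H_{1,X/S}$. Here the engine is the fundamental sequence \ref{eqn:fundamental_exact_seq}, $0 \to \bb G_{m} \to \bb G_{m}^{log} \to \bb G_{m}^{trop} \to 0$, together with Kato's theorem that $\bb G_m^{log}$ is a klf sheaf and the classical fact that $\bb G_m$ is an fppf sheaf; a short diagram chase then yields descent for $\bb G_m^{trop}$ along strict fppf covers and root stacks. The homological sheaves $\ca H_{1,X/S}$ and $\ZE$ are strict \'etale sheaves built from the tropicalization $\ca G_{X/S}$, so I would check that they are insensitive to all three classes of covers, being pulled back from the combinatorial (underlying-reduced) structure, and that the intersection pairing of \ref{definition:sheaf_of_homologies_and_intersection_pairing} is compatible with the comparison maps, so that the $\dagger$ bounded monodromy subgroup is preserved.

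The hard part will be the log modification case for $\Trojac_{X/S}$. A log modification of the base subdivides the sheaf of dual graphs, replacing edges by chains of the same total length, so I must verify that $\sheafhom(\ca H_{1,X/S},\bb G_{m,S}^{trop})^\dagger/\ca H_{1,X/S}$ is genuinely invariant under such subdivisions: a subdivision induces an isomorphism on first homology and matches the intersection pairings once the metric is accounted for, and crucially the bounded monodromy condition is subdivision-invariant because it is phrased in terms of cycles $\gamma$ and their lengths $\ell(\gamma)$. Checking this compatibility, simultaneously with the role of log flatness of $T$ in keeping $\bb G_m$ and $\Pic^0_{X/S}$ under control, is the technical heart of the argument; the strict fppf and root stack cases, by contrast, should follow fairly directly from the klf-sheafiness of $\bb G_m^{log}$ and fppf descent for $\bb G_m$ and $\Pic^0_{X/S}$.
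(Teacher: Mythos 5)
Your reduction mechanism itself (extension-closure of the class of sheaves for a finer topology, applied to \ref{ExactLogPic}) is sound abstract sheaf theory, and your strict fppf case goes through. The fatal gap is the claim that $\Trojac_{X/S}$ and $\bb G_m^{trop}$ are invariant under root stacks and under log modifications of the test object: this is false, not merely unproved. Take the paper's own \ref{example:nonproper_pic_of_elliptic_degeneration}: over the standard geometric log point $s$ (characteristic monoid $\bb N$) with $X_s$ the nodal cubic of loop length $n$, one has $\Trojac_{X/S}(s)=\bb Z/n\bb Z$ by \ref{example:global_sections_trojac_of_elliptic_degen_curve_over_dvr}. If $s'\to s$ is the root stack of index $m$, then $\o M_{s'}=\tfrac1m\bb N$ while $\ca H_{1,X/S}$, the loop length $n$ and the intersection pairing are unchanged, bounded monodromy is vacuous in rank one, and (computing on the strict fppf chart $(\Spec k[t]/(t^m),\tfrac1m\bb N)$ of $s'$, on which $\mu_m$ acts trivially on all tropical data)
\[
\Trojac_{X/S}(s')\;=\;\tfrac1m\bb Z\,/\,n\bb Z\;\cong\;\bb Z/nm\bb Z\;\neq\;\bb Z/n\bb Z.
\]
Since root stacks are monomorphisms and epimorphisms, descent along $s'\to s$ is exactly the assertion $\Trojac(s)\iso\Trojac(s')$, so $\Trojac$ fails descent along root stacks; for the same reason $\bb G_m^{trop}$ fails it ($\bb Z$ versus $\tfrac1m\bb Z$), which is why the paper's \ref{Hom and Ext} must pass to the klf sheafification $\bb G_{m,X}^{trop,klf}$, a genuinely different sheaf. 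Log modifications fail as well, even over a log flat target: take $T=S=\Spec k[\bb N^2]$, $X/S$ a log curve with a single loop of smoothing parameter $e_1$, and $T'\to T$ the log blowup at the origin. Over $T$, bounded monodromy at the origin forces $\sheafhom(\ca H_{1},\bb G_m^{trop})^\dagger(T)=\bb Z e_1$, which is exactly the image of $\ca H_1(T)$ under the pairing, so $\Trojac(T)=0$; but over $T'$ the piecewise-linear function equal to $e_2$ on the cone $\langle(1,0),(1,1)\rangle$ and to $e_1$ on $\langle(1,1),(0,1)\rangle$ has bounded monodromy (after subdivision, $e_1$ lies in the interior of the dual of each maximal cone, so the condition becomes vacuous there) and its germ at a torus-fixed point is $e_2\notin\bb Z e_1$, giving a nonzero class in $\Trojac(T')$.

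Underlying this is a conceptual slip: a log modification or root stack of the test object $T$ does \emph{not} subdivide the sheaf of dual graphs — the curve $X_{T'}=X\times_T T'$ has literally the same nodes, the same $\ca H_{1}$ and the same pairing as $X_T$. (Subdividing dual graphs is what log modifications of $X$ itself do, as in \ref{Pic and LogPic}; that is a different operation from the covers in this proposition.) What changes is the target $\bb G_m^{trop}$: $\o M_{T'}$ acquires new sections (fractional monomials for root stacks, kinked piecewise-linear functions for log modifications), and the bounded monodromy subgroup genuinely grows. Since one of the two outer terms of \ref{ExactLogPic} fails descent, no five-lemma or extension-closure argument can prove the proposition: its truth rests on the extra tropical sections over $T'$ failing to lift to $\Logjac(T')$, the obstruction being a line-bundle class (in $\Pic(\ul{s'})=\on{Hom}(\mu_m,\bb G_m)\cong\bb Z/m\bb Z$ in the root stack example, in the Picard group of the exceptional locus for a blowup) — a cancellation between the two outer terms that is invisible when you treat them separately. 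A correct proof must work with $\Logjac$, equivalently with $M^{gp}$-cohomology, directly, showing these covers do not change it; that is what \cite[Corollary 4.4.14.2]{molcho_wise_2022} does, and the paper's proof of this proposition is precisely that citation.
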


\begin{proof}
    This is \cite[Corollary 4.4.14.2]{molcho_wise_2022}.
\end{proof}

The following result formalizes the idea that $\Logjac$ may be thought of as an appropriate quotient of Jacobians.

\begin{proposition}\label{Pic and LogPic}
    Let $X/S$ be a log curve and $\ca L$ a log line bundle of degree $0$ on $X$. There exists a diagram
    \[
    \xymatrix{
    X' \ar[r] & X_{S'} \ar[r]\ar[d] & S' \ar[d] \\
    & X \ar[r] & S \\
    }
    \]
    where $S' \to S$ is a log modification of a root stack, $X' \to X_{S'}$ is a log modification such that $X' \to S'$ remains a log curve, and $\ca L|_{X'}$ lies in the image of $\operatorname{Pic}^{\mathrm{deg}=0}_{X'/S'} \to \operatorname{LogPic}^{\mathrm{deg}=0}_{X'/S'}$.
\end{proposition}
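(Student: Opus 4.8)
The plan is to reduce the statement to a combinatorial problem about the tropicalization of $\ca L$ and to solve it by a subdivision-and-root-stack argument, the prototype being the computation in \ref{example:nonproper_pic_of_elliptic_degeneration}. By the exact sequence \ref{ExactLogPic}, the subsheaf $\Pic^0_{X'/S'}\subset\Logjac_{X'/S'}$ is the kernel of tropicalization, and it is contained in the image of $\Pic^{\deg=0}_{X'/S'}\to\Logjac_{X'/S'}$ via multidegree-$0$ line bundles. Consequently $\ca L|_{X'}$ lies in that image if and only if the class $\trop(\ca L|_{X'})\in\Trojac_{X'/S'}$ is realized by the multidegree of some total-degree-$0$ line bundle, i.e. lies in the image of the multidegree map $\Pic^{\deg=0}_{X'/S'}\to\Trojac_{X'/S'}$; indeed, if $\trop(\ca L|_{X'})=\trop(\ca M)$ then $\ca L|_{X'}-\ca M\in\Pic^0_{X'/S'}$. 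By \ref{proposition:descent_for_logjac}, $\Logjac$ descends along root stacks, log-flat log modifications, and strict fppf covers, so $\ca L$ has a well-defined pullback and the reduction is legitimate.

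I would then translate this into the language of \ref{definition:tropical_jacobian_bounded_monodromy}. A class in $\Trojac=\sheafhom(\ca H_{1,X/S},\bb G_{m,S}^{trop})^\dagger/\ca H_{1,X/S}$ is realized by a line bundle precisely when a representing homomorphism $c\colon\ca H_{1,X/S}\to\bb G_{m,S}^{trop}$ lies in the image of the intersection pairing $\ca Z^{\ca E}_{X/S}\to\sheafhom(\ca H_{1,X/S},\bb G_{m,S}^{trop})$, that is, when $c$ is a genuine piecewise-linear function on the dual graph; this is because line bundles die in $H^1(X,\o M_X^{gp})=\sheafhom/\ca Z^{\ca E}$. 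The two operations in the diagram enlarge this image in complementary ways: a subdivision $X'\to X_{S'}$ fixes $\ca H_{1}$ and its metric (hence fixes $\Trojac$ itself) but enlarges the edge-slope lattice $\ca Z^{\ca E}$, while a root stack on $S$ enlarges the target $\bb G_{m,S}^{trop}$ and so permits cutting edges at new, fractional points. The class $c$ is unchanged; what grows is the subgroup of classes realizable by line bundles, until it swallows $c$.

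With the criterion in hand I would work first on a geometric fibre over a log point $(\Spec k,\o P)$, where $c\colon H_1(\Gamma)\to\o P^{gp}$ satisfies the bounded-monodromy inequality $-N\ell(\gamma)\le c(\gamma)\le N\ell(\gamma)$ for some integer $N$. This bound lets me represent $c$ by a piecewise-linear function whose slopes lie in $[-N,N]$: after adjoining the relevant roots of the edge parameters (a root stack) and subdividing each edge of $\Gamma$ at the finitely many points where this function changes slope, $c$ becomes the boundary of an edge-slope function in $\ca Z^{\ca E}_{X'/S'}$, so its $\Trojac$-class is realized by a line bundle. I would then globalize using the sheaf-of-dual-graphs description \ref{proposition:tropicalizing_log_curves}: the fibrewise data assemble, after a subdivision of the fan of $\o M_S$ (the log modification of the root stack $S'\to S$), into a global root stack followed by a log modification of the base and a global subdivision $X'\to X_{S'}$, compatibly with the edge-contraction maps that govern specialization.

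The main obstacle is this globalization. I expect the delicate points to be: checking that the fibrewise subdivisions glue to a flat family for which $X'\to S'$ is again a log curve, so that the newly inserted vertices are honest nodes carrying the characteristic monoids required by \ref{definition:log_curve}; ensuring the base refinement is compatible with the restriction (edge-contraction) maps of $\ca G_{X/S}$, bearing in mind that $R^1\pi_*\bb G_{m,X}^{trop}$ does not commute with base change; and keeping the whole construction finite. Here the bounded-monodromy hypothesis built into $\Logjac$ is essential: it bounds $N$, hence the slopes, hence the number of edge-cuts and the index of the root stack, so that a single log modification of a root stack suffices rather than an infinite tower of them.
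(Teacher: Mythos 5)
Your first two paragraphs are correct and in fact mirror the structure of the real argument: by \ref{ExactLogPic}, the class $\ca L|_{X'}$ comes from $\operatorname{Pic}^{\mathrm{deg}=0}_{X'/S'}$ if and only if some representative $c$ of its tropicalization lies in the image of the intersection pairing $\ca Z^{\ca E}_{X'/S'} \to \sheafhom(\ca H_{1,X'/S'},\bb G_{m,S'}^{trop})$, and you correctly identify what the two moves buy (subdivision enlarges $\ca Z^{\ca E}$, a root stack enlarges $\bb G_{m,S}^{trop}$). Be aware, though, that the paper does not prove this proposition itself: its proof is the single line that it follows from \cite[Proposition 4.3.2]{molcho_wise_2022}, and that cited result is essentially the tropical realizability statement you still owe. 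So your proposal stands or falls on your third paragraph.

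That is where the gap is. The sentence ``this bound lets me represent $c$ by a piecewise-linear function whose slopes lie in $[-N,N]$'' asserts exactly the hard content without an argument. Realizing $c$ by integer slopes on a subdivision (over a fixed base cone, up to root stack) is equivalent to extending $c$ from $\ca H_{1}$ to all of $\ca Z^{\ca E}$, i.e.\ choosing for every edge $e$ an element $\mu_e \in \o M^{gp}$ with $c(\gamma)=\sum_e \gamma(e)\mu_e$ for all cycles $\gamma$ and with $-N_e\ell(e)\le \mu_e \le N_e\ell(e)$ in the monoid partial order: only a bounded $\mu_e$ can be cut into nonnegative pieces $\sum_i m_{e,i}\delta_{e,i}$ by a subdivision plus root stack. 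Bounded monodromy gives such bounds only on cycles, and producing a bounded \emph{extension} is a genuine linear selection problem over the cone $\operatorname{Hom}(\o M_{S,s},\bb R_{\ge 0})$: solvability point by point on that cone does follow from bounded monodromy, but a single $\o M^{gp}$-valued solution need not exist until one subdivides the cone, i.e.\ performs a log modification of the base. This is precisely why the statement allows $S'\to S$ to be a log modification of a root stack; your sketch invokes the base modification only when globalizing across strata, whereas you have not shown it can be avoided even over a single log point whose monoid has rank $\ge 2$. Finally, the globalization step is not proved either: your last paragraph lists its difficulties (``I expect the delicate points to be\dots'') rather than resolving them. As it stands, the proposal is a correct reduction together with an unproved claim that is essentially the proposition itself.
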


\begin{proof}
    This follows from \cite[Proposition 4.3.2]{molcho_wise_2022}.
\end{proof}

\section{Degenerating torsors}

\subsection{Finite logarithmic group objects}

Following \cite{katoJuin}, we consider several notions of log finite flat groups.

\begin{definition}
    Let $S$ be a log scheme and $S_{klf}$ the category of log schemes over $S$, endowed with the Kummer log flat topology. Denote by $\cat{AbSh}(S_{klf})$ the category\footnote{We ignore set-theoretic issues.} of abelian sheaves on $S_{klf}$. We define the following full subcategories of $\cat{AbSh}(S_{klf})$.
    \begin{enumerate}
        \item $(fin/S)_c$ is the category of finite flat commutative group schemes over $\underline{S}$, equipped with the $S$-strict log structure.
        \item $(fin/S)_f$ is the category of abelian sheaves on $S_{klf}$ whose objects, after a Kummer log flat base change over $S$, belong to $(fin/S)_c$.
    \end{enumerate}
\end{definition}

By \cite[\S 1.5]{katoJuin}, the \emph{Cartier dual} endofunctor on $\cat{AbSh}(S_{klf})$
\begin{equation*}
    G \mapsto G^D:=\sheafhom_S(G,\bb G_m)
\end{equation*}
preserves $(fin/S)_f$. \\

\begin{example} We give two examples of objects in $(fin/S)_f$:\\
\begin{itemize} 
    \item (\cite[\S 1.8]{katoJuin})  Let $S$ be a log scheme and $n \geq 1$. Let 
    $$1 \to \mu_n \to \mathbb{G}_m^{log} \xrightarrow{\times n} \mathbb{G}_m^{log} \to 1$$
    be the Kummer exact sequence in the klf site. Let $\delta_n$ be the connecting map $H^0(S, \mathbb{G}_m^{log}) \to H^1(S,\mu_n)$. Let $a \in \Gamma(S,M_S^{gp})$. The section $\delta_n(a)$ yields an extension
    $$ 1 \to \mu_n \to G \to \mathbb{Z}/n\mathbb{Z} \to 1,$$ 
and the resulting $G$ is an object in $(fin/S)_f$. Indeed, it is a $\mu_n$-klf torsor over $\mathbb{Z}/n\mathbb{Z}$, hence representable by a finite flat group scheme of Kummer type (cf. \cite[Theorem 9.1]{katoLogStructuresII}). But if there exists a point $x \in X$ such that the image of $a$ in $\overline{M}_{X,\overline{x}}^{gp}$ is not an $n$-th power, $G$ does not belong to $(fin/S)_c$.
    \item The $n$-torsion of the log Jacobian of any log curve $X/S$ is an object of $(fin/S)_f$. The smallest klf cover $\pi \colon S' \to S$ such that $\pi^*\on{LogPic}_{X/S}[n]$ is an object of $(fin/S')_c$ is a root stack, explicitly described in \cite[Section 4.2]{Holmes2022Logarithmic-mod}. In fact, by \cite[Proposition 4.5]{katoJuin} the $n$-torsion of any log abelian variety in the sense of \cite{IIKajiwara2008Logarithmic-a} is an object of $(fin/S)_f$, and log Jacobians are log abelian varieties by \cite[Theorem 4.15.7]{molcho_wise_2022}.
\end{itemize}
   
\end{example}

\subsection{Kummer log flat torsors}
\begin{definition}
    Let $S$ be a log scheme, $X$ an $S$-log scheme and $G$ an object of $(fin/S)_f$. A klf $G$-torsor (or \emph{log $G$-torsor}) $T$ over $X$ is a $G$-principal homogeneous space in $X_{klf}$. By definition of $(fin/S)_f$, there exists a Kummer log étale cover $S' \to S$ such that $G_{S'} \to S$ is finite, flat and strict. By \cite[Theorem 9.1]{katoLogStructuresII}, the base change $T\times_S S'$ is then representable by a log scheme which is Kummer log flat over the base, with a finite underlying scheme over the base.\\ 
\end{definition}
\begin{example}
In the setting of \ref{Klflat_morphism}, suppose that $A$ contains an $n$-th root of unity. The morphism $f \colon \Spec B \to \Spec A$ is a $\bb Z/n\bb Z$-Kummer log flat (in fact, Kummer log \'etale) torsor. Since $f$ is ramified, it is not a fppf torsor.
\end{example}

\subsection{Extending torsors into log torsors}
Assume that $S$ is log regular and let $U\subset S$ be the dense open where the log structure is trivial. Let $X/S$ be a log curve (then $X_U/U$ is smooth), $G$ an element of $(fin/S)_f$ and $Y \to X_U$ a fppf $G_U$-torsor. There does not always exist an fppf $G$-torsor over $X$ extending $Y \to X_U$ (even when $S$ is the spectrum of a DVR, see e.g. \cite[\S 4.1]{Sara}). Asking for an extension of $Y \to X_U$ to a \emph{log} torsor over $X$ is more reasonable. Such extensions are always unique by \ref{Uniqueness}. Our goal in this section is to prove that the log Jacobian of $X/S$ parametrizes log $G$-torsors (\ref{mainrslt}) and deduce an existence result for log extensions of torsors (\ref{maincor}).

\subsubsection{Uniqueness of extensions}
\begin{lemma}\label{uniq}
Let $g: T \to X$ be a finite morphism of schemes with $X$ normal and integral. Any rational section of $g$ is defined everywhere.
\end{lemma}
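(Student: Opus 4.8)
The plan is to prove this standard algebraic fact using the normality of $X$ and the finiteness of $g$, reducing to a statement about integral closure. Let me sketch the approach.

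\textbf{The plan.} Since $g \colon T \to X$ is finite, it is in particular affine, so I may work locally on $X$ and assume $X = \operatorname{Spec} A$ and $T = \operatorname{Spec} B$ with $A \to B$ a finite ring map and $A$ a normal integral domain. A rational section of $g$ is a section defined over the generic point of $X$, i.e.\ over the fraction field $K = \operatorname{Frac}(A)$; concretely it is given by a $K$-algebra homomorphism $B \otimes_A K \to K$ splitting the structure map, equivalently an $A$-algebra map $s \colon B \to K$ defined over the generic point. The goal is to show that $s$ actually lands in $A$, i.e.\ factors through $A \hookrightarrow K$, so that it defines a genuine section over all of $X$.

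\textbf{Key steps.} First I would observe that since $A \to B$ is finite, every element $b \in B$ is integral over $A$: it satisfies a monic polynomial $b^n + a_{n-1}b^{n-1} + \dots + a_0 = 0$ with $a_i \in A$. Applying the $A$-algebra homomorphism $s \colon B \to K$ to this relation, I obtain $s(b)^n + a_{n-1}s(b)^{n-1} + \dots + a_0 = 0$, which exhibits the element $s(b) \in K$ as integral over $A$. Since $A$ is normal (integrally closed in its fraction field $K$), this forces $s(b) \in A$. Thus $s$ factors through $A$, i.e.\ the rational section extends to an honest morphism $X \to T$ over $X$ defined everywhere. Finally I would note that because $X$ is integral and $T \to X$ is separated (being finite, hence affine), the extension is unique and agrees with the original section wherever the latter was defined, so gluing the local extensions is unproblematic.

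\textbf{Main obstacle.} This is a genuinely routine argument, so there is no serious obstacle; the only point requiring a little care is the reduction to the affine local setting and the precise meaning of ``rational section'' in the possibly non-Noetherian or reducible $T$ setting. The statement asks only that $X$ be normal and integral, while $T$ is arbitrary among finite $X$-schemes, so $T$ need not be integral or even reduced. I would therefore phrase the argument purely in terms of the $A$-algebra map $s \colon B \to K$ and the integrality of each $b \in B$ over $A$, which does not require $B$ to be a domain; the normality of $A$ does all the work. The uniqueness of the everywhere-defined extension follows from $X$ being integral (hence the generic point is dense) together with the separatedness of the finite morphism $g$.
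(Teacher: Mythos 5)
Your proof is correct. The paper itself gives no argument for this lemma: it simply cites it as a special case of a result of Grothendieck (EGA, Corollary 6.1.15). Your write-up supplies the standard argument underlying that reference: reduce to the affine case $X = \operatorname{Spec} A$, $T = \operatorname{Spec} B$, interpret the rational section as an $A$-algebra map $s \colon B \to K = \operatorname{Frac}(A)$, use finiteness of $A \to B$ to see that each $s(b)$ is integral over $A$, and conclude $s(b) \in A$ by normality; the gluing and compatibility with the original section are correctly handled via separatedness of $g$ together with reducedness and irreducibility of $X$. You are also right to flag, and to correctly dispose of, the one point needing care: $T$ may be non-reduced or reducible, but the argument never needs $B$ to be a domain, since integrality of each element of $B$ over $A$ and the factorization of $s$ through $A \hookrightarrow K$ are all that is used.
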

\begin{proof}
    This is a special case of \cite[Corollary 6.1.15]{Groth}.
\end{proof}

\begin{lemma}\label{lemma:restriction_group_is_bijective}
    Let $Y$ be a log regular log scheme, $Y_U\subset Y$ a strict, scheme-theoretically dense open and $G_Y$ a finite, flat, strict and commutative $Y$-group. Then, the restriction map
    \begin{equation*}
        G_Y(Y) \to G_Y(Y_U)
    \end{equation*}
    is bijective.
\end{lemma}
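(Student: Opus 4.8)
The plan is to reduce the statement to a question about the underlying schemes and then apply \ref{uniq}. First I would use strictness: since $G_Y$ is strict over $Y$, its log structure is pulled back from $Y$ along the structure map $p\colon G_Y\to Y$. For any section $\sigma\colon Y\to G_Y$ of $p$ one has canonically $\sigma^*M_{G_Y}=\sigma^*p^*M_Y=M_Y$ (because $p\circ\sigma=\mathrm{id}$), so a section in the category of log schemes carries no extra data beyond a section of the underlying scheme morphism $\underline p\colon\underline{G_Y}\to\underline Y$; the same holds after restriction to the strict open $Y_U$. Hence it suffices to show that the restriction map on sections of the finite morphism $\underline p$ is bijective.

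Next I would record that log regularity of $Y$ forces $\underline Y$ to be normal (\cite[Theorem 4.1]{KatoToric}); in particular $\underline Y$ is reduced, and being locally Noetherian and normal it is the disjoint union of its connected components, each of which is integral and normal. Since $Y_U$ is schematically dense and $\underline Y$ is reduced, $Y_U$ is topologically dense, and its intersection with each component is a nonempty, hence dense, open of that (integral) component.

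Injectivity then follows because $\underline p$ is finite, hence separated: two sections of a separated morphism that agree on the dense open $Y_U$ of the reduced scheme $\underline Y$ agree everywhere. For surjectivity, a section over $Y_U$ is exactly a rational section of the finite morphism $\underline p$; restricting to each integral normal connected component of $\underline Y$ and applying \ref{uniq} extends it to a genuine section on that component, and these glue to a section over all of $\underline Y$ restricting to the given one. This produces the required preimage.

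I expect the only real subtleties to be the two reductions — passing from log schemes to underlying schemes via strictness, and reducing to the integral case so that \ref{uniq} (which requires an integral normal base) can be applied componentwise. Once these are in place, separatedness handles injectivity and \ref{uniq} handles surjectivity, with no further computation needed.
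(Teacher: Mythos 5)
Your proof is correct and follows essentially the same route as the paper's: strictness reduces everything to the underlying schemes, injectivity comes from separatedness of the finite map $\ul G_Y \to \ul Y$ together with density of $Y_U$, and surjectivity comes from \ref{uniq} using normality of $\ul Y$ (a consequence of log regularity). The only difference is that you spell out details the paper leaves implicit, namely the decomposition of the normal locally Noetherian scheme $\ul Y$ into integral connected components so that \ref{uniq} applies componentwise, and the explicit verification that a strict section carries no extra log data.
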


\begin{proof}
    Since $G_Y \to Y$ is strict, it suffices to check that the scheme-theoretic restriction
    \begin{equation}\label{polarbear}
        \ul G_Y(\ul Y) \to \ul G_Y(\ul Y_U)
    \end{equation}
    is bijective. Surjectivity is \ref{uniq} since $\ul Y$ is normal. The map $G_Y \to Y$ is finite, hence separated, and $U$ is scheme-theoretically dense in $Y$ so \ref{polarbear} is injective as well.
\end{proof}

\begin{corollary}\label{Uniqueness}
   Let $S$ be a log regular log scheme and $X/S$ a log curve, smooth over the dense open $U\subset S$ of triviality of $M_S$. Let $G/S$ be a strict finite flat commutative group scheme. The restriction map
   \begin{equation}\label{eqn:jiehps}
       H^1_{klf}(X,G) \to H^1_{fppf}(X_U,G_U)
   \end{equation}
   is injective.
\end{corollary}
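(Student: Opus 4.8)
The plan is to show that the kernel of \ref{eqn:jiehps} is trivial. Since $G$ is commutative, klf $G$-torsors form an abelian group under contracted product and \ref{eqn:jiehps} is a group homomorphism, so it suffices to prove that a klf $G$-torsor $T$ on $X$ whose restriction to $X_U$ is trivial is itself trivial. Being a log curve over the log regular base $S$, the morphism $X \to S$ is log smooth, hence $X$ is log regular; in particular $\ul X$ is normal. Moreover $U$ is scheme-theoretically dense in $S$, $\pi$ is flat, and away from the nodes of its fibres $\pi$ is strict, so $X_U$ is a strict, scheme-theoretically dense open of $X$ on which $M_X$ is trivial. Consequently $T|_{X_U}$ is an ordinary fppf $G_U$-torsor on the scheme $\ul{X_U}$, and its triviality amounts to the existence of a section $s_U \in T(X_U)$. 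My goal thus becomes to extend $s_U$ to a global section $s \in T(X)$.

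Since $G$ is strict, Kato's representability theorem \cite[Theorem 9.1]{katoLogStructuresII} applies directly to $T$, giving a representing log scheme $\mathcal{T} \to X$ whose underlying morphism $\ul{\mathcal{T}} \to \ul X$ is finite. The section $s_U$ then defines a rational section of this finite morphism over the normal scheme $\ul X$, so by \ref{uniq} it extends to a scheme-theoretic section $\ul s \colon \ul X \to \ul{\mathcal{T}}$. It remains to upgrade $\ul s$ to a morphism of log schemes, i.e. to produce a retraction of the canonical map $M_X \to \ul s^* M_{\mathcal{T}}$ compatible with the structure maps to $\cO_X$; this is exactly what is needed for $s$ to trivialize $T$ in the klf topology.

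This last step is the crux, and is where log regularity enters decisively: a scheme-theoretic section of a Kummer log flat morphism need not respect log structures, the obstruction being the possible non-triviality of the Kummer cokernel of $\oM_X \to \ul s^* \oM_{\mathcal{T}}$, a finite sheaf supported on $X \setminus X_U$. I would show this map is an isomorphism. It is an isomorphism over the dense open $X_U$, where both characteristic monoids are trivial, and it is Kummer by construction. Working locally at a point $x$ of $X \setminus X_U$ and choosing charts, the section $\ul s$ pulls local generators of $\ul s^* \oM_{\mathcal{T}}$ back to functions on $\ul X$; combined with the Kummer relations, this would exhibit the monomials cutting out the log strata through $x$ as proper powers in $\cO_{X,x}$. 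On a log regular log scheme these generators of $\oM_X$ are primitive, which forces the Kummer cokernel to vanish at $x$. Equivalently, using Kato's description of a log regular log structure as $M_X = \cO_X \cap j_* \cO_{X_U}^\times$ for the open immersion $j \colon X_U \hookrightarrow X$, every local section of $\ul s^* M_{\mathcal{T}}$ maps to a function that is regular on $X$ and invertible on $X_U$, hence lies in $M_X$; this yields the desired retraction. Thus $\ul s$ is a log section and $T$ is trivial.

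The main obstacle is precisely this promotion of the scheme-theoretic section to a morphism of log schemes: normality alone, via \ref{uniq}, only controls the underlying schemes, and it is the log regularity of $X$—through the primitivity of $\oM_X$, or equivalently its divisorial description on the dense trivial locus—that rules out the Kummer ramification which would otherwise obstruct extending $s_U$. I would also take care to run this argument on the base $X$ rather than on $\mathcal{T}$, since $\ul{\mathcal{T}}$ may fail to be reduced (for instance for $G = \mu_p$ in characteristic $p$), so that log regularity of $\mathcal{T}$ is unavailable.
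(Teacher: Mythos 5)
Your proposal is correct, and it takes a genuinely different route from the paper. The paper never represents $T$ over $X$: it chooses a klf cover $X' \to X$ trivializing $T$, observes that over $X'$, $X''=X'\times_X X'$ and $X'''$ the torsor becomes the \emph{strict} finite flat group $G$, so that its sections are purely scheme-theoretic, applies \ref{lemma:restriction_group_is_bijective} (essentially \ref{uniq} plus separatedness) on these covers, and concludes that $T(X)\to T(X_U)$ is bijective by the sheaf axiom for the klf topology. You instead apply Kato's representability theorem \cite[Theorem 9.1]{katoLogStructuresII} to $T$ itself (legitimate, since $G$ is strict finite flat), extend the section scheme-theoretically via \ref{uniq}, and then do the log-theoretic work the paper's argument is designed to avoid: upgrading $\ul s$ to a morphism of log schemes. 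Your second formulation of that step is the right one and is complete: klf morphisms over a trivial log structure are strict, so $M_{\ca T}$ is trivial over the preimage of $X_U$; hence every section of $\ul s^{-1}M_{\ca T}\to\ca O_X$ is regular on $X$ and invertible on $X_U$, and therefore lies in $M_X=\ca O_X\cap j_*\ca O_{X_U}^\times$ by Kato's theorem on log regular schemes \cite{KatoToric}; injectivity of $M_X\hra\ca O_X$ then forces the composite $M_X\to\ul s^*M_{\ca T}\to M_X$ to be the identity, so $\ul s$ is a log section over $X$ and $T$ is trivial (your first formulation, via ``primitivity of generators,'' is vaguer and can be dropped). As for what each approach buys: the paper's descent argument keeps all log structures strict, so no retraction of log structures ever needs to be built, but it rests on the assertion that $X',X'',X'''$ are log smooth over $S$ --- which is not automatic for an arbitrary klf cover (strict fppf covers can be non-reduced, and the torsor itself is not log smooth when $G$ is infinitesimal), so that route requires a more careful choice or justification of the trivializing cover; your argument uses only the log regularity of $X$ itself, which is unassailable (log curves are log smooth, and log smooth over log regular is log regular), at the price of confronting the non-strictness of $\ca T\to X$ head-on. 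One shared loose end: \ref{uniq} asks for the base to be normal \emph{and integral}, so both proofs should be run on connected components of $\ul X$, which are integral since $\ul X$ is normal and locally Noetherian.
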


\begin{proof}
Let $T$ be a $G_X$-torsor over $X$ and let $X' \to X$ be a klf covering which trivializes $T$. Let $X'':=X'\times_X X'$ and $X''':=X'\times_X X' \times_X X'$. Since $X',X'',X'''$ are log smooth over $S$, they are log regular. Since $T$ is trivial over all of them, by \ref{lemma:restriction_group_is_bijective} the restriction maps
\begin{align*}
    T_{X'}(X') & \to T_{X'}(X'\times_S U) \\
    T_{X''}(X'') & \to T_{X''}(X''\times_S U) \\
    T_{X'''}(X''') & \to T_{X'''}(X'''\times_S U)
\end{align*}
are bijective. Thus, by descent, 
\begin{equation}\label{penguin}
   T(X) \to T(X_U)
\end{equation}
is bijective as well. In particular, $T \to X$ has a section if and only if $T_U \to X_U$ does, i.e. \ref{eqn:jiehps} is injective as claimed.
\end{proof}

\subsubsection{Some preliminary lemmas}
Log curves satisfy the following logarithmic analogue of cohomological flatness in dimension $0$:
\begin{lemma}\label{f_*M_X=M_S} Let $f\colon X \to S$ be a log curve. The natural map
\begin{equation}\label{eqn:log_cohomological_flatness}
    \mathbb{G}_{m,S}^{log} \to f_*\mathbb{G}_{m,X}^{log}
\end{equation}
is an isomorphism.
\end{lemma}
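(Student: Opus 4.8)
The plan is to deduce the statement from the fundamental exact sequence \ref{eqn:fundamental_exact_seq}, that is from
\[
0 \to \bb G_{m,X} \to \bb G_{m,X}^{log} \to \bb G_{m,X}^{trop} \to 0,
\]
by pushing forward along $f$ and comparing with the corresponding sequence on $S$. First I would record that $f_*\bb G_{m,X} = \bb G_{m,S}$: since $\ul X \to \ul S$ is prestable we have $f_*\ca O_X = \ca O_S$ universally (\ref{definition:prestable}), and a section of $f_*\ca O_X^\times$ over some $T/S$ is a $u\in \ca O_{X_T}(X_T)$ with $u^{-1}\in \ca O_{X_T}(X_T)$; both lie in $\ca O_T(T)$ and multiply to $1$, so $u\in \ca O_T^\times$. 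Applying $f_*$ to the sequence above then yields a left-exact sequence which I compare, via the natural pullback maps, with $0 \to \bb G_{m,S} \to \bb G_{m,S}^{log} \to \bb G_{m,S}^{trop} \to 0$. The left vertical map is the isomorphism just established, so a five-lemma argument reduces the claim that $\bb G_{m,S}^{log}\to f_*\bb G_{m,X}^{log}$ is an isomorphism to the single assertion that the kernel of the connecting map
\[
f_*\bb G_{m,X}^{trop} = f_*\o M_X^{gp} \longrightarrow R^1f_*\bb G_{m,X} = \Pic_{X/S}
\]
coincides with the image of $\bb G_{m,S}^{trop} = \o M_S^{gp}$, namely the constant tropical functions pulled back from $S$.

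One inclusion is immediate: a section $a$ of $\o M_S^{gp}$ lifts \'etale-locally on $S$ to $M_S^{gp}$, and pulling back this lift to $M_X^{gp}$ exhibits $f^*a$ as coming from $f_*\bb G_{m,X}^{log}$, so its image in $\Pic_{X/S}$ vanishes. For the reverse inclusion I would check the kernel on geometric stalks $\bar s \to S$. By \ref{proposition:tropicalizing_log_curves} and the description of $\o M_X$ in terms of the tropicalization (\cite{molcho_wise_2022}), the stalk of $f_*\o M_X^{gp}$ at $\bar s$ is the group of \emph{piecewise-linear functions} on the dual graph $\Gamma_{\bar s}$ valued in $\o M_{S,\bar s}^{gp}$: an assignment of a value to each vertex together with an integer slope $m_e$ along each oriented edge $e$, compatible with the edge lengths $\ell(e)\in\o M_{S,\bar s}$. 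Moreover, by \cite{molcho_wise_2022} the composite of the connecting map with the multidegree map $\Pic_{X/S}\to \bb Z^{\ca V}$ sends such a function $g$ to the divisor whose value at a vertex $v$ is the sum of the slopes of $g$ exiting $v$. Hence any $g$ in the kernel has multidegree $0$, which is exactly the condition that its slope vector $\gamma=(m_e)$ lie in the sheaf of homologies $\ca H_{1,X/S}$ of \ref{definition:sheaf_of_homologies_and_intersection_pairing}.

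It then remains to see that a piecewise-linear function whose slope vector is a cycle must be constant. Here I would invoke the intersection pairing of \ref{definition:sheaf_of_homologies_and_intersection_pairing}: for the cycle $\gamma=(m_e)$ its self-pairing is $\langle\gamma,\gamma\rangle=\sum_e m_e^2\,\ell(e)$, and this equals the period of $g$ around the closed cycle $\gamma$, which vanishes because $g$ is a single-valued function on the vertices. Since $\o M_{S,\bar s}$ is sharp, a sum $\sum_e m_e^2\,\ell(e)$ of nonnegative multiples of the nonzero elements $\ell(e)$ can be zero only if every $m_e=0$. Thus $g$ has zero slopes, hence is constant, i.e. lies in the image of $\o M_{S,\bar s}^{gp}$, as required.

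The main obstacle is the middle paragraph: $\o M_X^{gp}$ is not torsion, so proper base change does not apply, and one must justify carefully that the stalk of $f_*\o M_X^{gp}$ is computed by piecewise-linear functions on the fibre graph and that the connecting map is given by the exiting-slope multidegree formula. Both facts rest on the Molcho--Wise analysis of $\o M_X$ on a log curve; once they are in hand, the positivity argument via the intersection pairing is a short and robust computation.
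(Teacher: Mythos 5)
Your proposal is correct, and its skeleton matches the paper's proof of \ref{f_*M_X=M_S}: both use $f_*\ca O_X^\times=\ca O_S^\times$ together with the sequence \ref{eqn:fundamental_exact_seq} to reduce everything to showing that a piecewise-linear function $\phi$ on the fibre graph whose associated line bundle $\ca O(\phi)$ is trivial (in particular has multidegree $0$) must be constant, after identifying sections of $f_*\o M_X^{gp}$ over a strictly henselian base with PL functions on the tropicalization --- your ``check on geometric stalks'' is the same reduction the paper performs by passing to global sections over a strict henselization. Where you genuinely diverge is the final combinatorial step. The paper runs a minimum principle: at a vertex where $\phi$ attains a minimal value for the partial monoidal order, every exiting slope is nonnegative, and since the exiting slopes sum to the multidegree, they all vanish; connectedness then propagates the minimal value everywhere. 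You instead note that multidegree $0$ says the slope vector of $\phi$ is a cycle $\gamma\in\ca H_{1}$, compute its self-pairing under the intersection pairing of \ref{definition:sheaf_of_homologies_and_intersection_pairing} as $\sum_e m_e^2\ell(e)$, identify this with the period of $\phi$ around $\gamma$ (zero, since $\phi$ is single-valued), and conclude from sharpness of $\o M_{S,\bar s}$ that all slopes vanish. Both are complete arguments for the same harmonicity statement; yours is a discrete ``energy vanishing'' argument that reuses structure the paper only needs later (the pairing defining $\on{TroJac}$), while the paper's minimum principle is more self-contained, using nothing beyond the partial order. Two small points you leave implicit but should state: the five-lemma step also needs injectivity of $\o M_S^{gp}\to f_*\o M_X^{gp}$, which follows by restricting to the strict (smooth) locus of $f$; and ``zero slopes implies constant'' uses connectedness of the fibres, which is part of \ref{definition:prestable}.
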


\begin{proof}
    This follows from \cite[Lemma 4.6.1]{molcho_wise_2022}. We give the idea here. The formation of \ref{eqn:log_cohomological_flatness} commutes with base change, so it suffices to check that it is an isomorphism on global sections. Working \'etale-locally and by a limit argument, we may assume $S$ is local and strictly henselian. The diagram of abelian sheaves on $S_{\et}$
    \[
    \xymatrix{
    0 \ar[r] & \ca O_S^\times\ar[r]\ar[d] & M_S^{gp}\ar[r]\ar[d] & \o M_S^{gp}\ar[r]\ar[d] & 0 \ar[d] \\
    0 \ar[r] & f_*\ca O_X^\times\ar[r] & f_* M_X^{gp}\ar[r] & f_* \o M_X^{gp}\ar[r] & \on{Pic}_{X/S}.
    }
    \]
    has exact rows. By \cite[\href{https://stacks.math.columbia.edu/tag/0GKA}{Tag 0GKA}]{stacks-project}, the first vertical arrow is an isomorphism. Hence, it suffices to check that the kernel of $f_* \o M_X^{gp}(S) \to \on{Pic}_{X/S}(S)$, which contains $\o M_S^{gp}(S)$, is exactly $\o M_{S}^{gp}(S)$. Let $s$ be the closed point of $S$ and $\mathfrak X_s$ be the tropicalization of $X_s/s$, with edge lengths in $\o M_S(S)$. The group $f_* \o M_X^{gp}(S)=f_* \o M_X^{gp}(s)$ is naturally identified with the group $\on{PL}(\mathfrak X_s)$ of $\o M_S(S)$-valued piecewise-linear functions on $\mathfrak X_s$ (with integer slopes along edges). This identifies $\o M_{S}^{gp}(S) \hra f_*\o M_X^{gp}(S)$ with the subgroup of $\on{PL}(\mathfrak X_s)$ consisting of constant functions. Thus, we have reduced to showing that any function $\phi$ in the kernel of
    \begin{align*}
        \on{PL}(\mathfrak X_s) & \to \on{Pic}_{X/S}(S) \\
        \psi & \mapsto \ca O(\psi)
    \end{align*}
    is constant. Let $v$ be a vertex of $\mathfrak X_s$ at which $\phi$ attains a minimal value $m$ for the partial monoidal order on $\o M^{gp}$ induced by $\o M$. By the minimality of $m$, for each half-edge $e$ exiting $v$, the slope of $\phi$ along $e$ is nonnegative. These slopes sum to the degree of the line bundle $\ca O(\phi)$ on the irreducible component of $\ul X_s$ corresponding to $v$, so they are all zero. Hence, $\phi$ also takes the value $m$ at every vertex neighbouring $v$. Applying this to each vertex in $\phi^{-1}(\{m\})$, and since $\mathfrak X_s$ is connected, we deduce that $\phi$ is constant as claimed.
\end{proof}

Given a log scheme $Y$, we denote by $\cat{Ab}_{klf}/Y$ the category of sheaves of abelian groups on the big klf site $(\cat{LSch}/Y)^{klf}$. We denote by $\sheafhom_{klf},\sheafext_{klf}$ the $\operatorname{Hom}$ and $\operatorname{Ext}$ operators on $\cat{Ab}_{klf}$. We omit the subscripts when we can safely do so.

\begin{lemma}\label{Hom and Ext}
 Let $X/S$ be a log curve and $G$ an object in $(fin/S)_f$. There are isomorphisms:
\begin{align}\label{Hom}
\sheafhom(G_{X},\mathbb{G}_{m,X_{}}^{log}) \simeq \sheafhom(G_{X},\mathbb{G}_{m,X_{}}),
\end{align}
\begin{align}\label{Ext}
\sheafext^1(G_{X},\mathbb{G}_{m,X_{}}^{log}) \simeq \sheafext^1(G_{X_{}},\mathbb{G}_{m,X_{}}) \simeq \{0\}.
\end{align}
\end{lemma}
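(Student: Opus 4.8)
The plan is to deduce both lines of the statement from the fundamental exact sequence \ref{eqn:fundamental_exact_seq}
\[
0 \to \bb G_{m,X} \to \bb G_{m,X}^{log} \to \bb G_{m,X}^{trop} \to 0
\]
by applying the long exact sequence of the functors $\sheafext^\bullet(G_X,-)$. Once I know that both $\sheafhom(G_X,\bb G_{m,X}^{trop})$ and $\sheafext^1(G_X,\bb G_{m,X}^{trop})$ vanish, the portion of that sequence around degrees $0$ and $1$ collapses to give the isomorphism \ref{Hom} together with an isomorphism $\sheafext^1(G_X,\bb G_{m,X}) \simeq \sheafext^1(G_X,\bb G_{m,X}^{log})$, i.e. the first isomorphism of \ref{Ext}. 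The remaining vanishing $\sheafext^1(G_X,\bb G_{m,X}) \simeq \{0\}$ is then handled separately.

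The heart of the argument is the vanishing against $\bb G_{m,X}^{trop}$, which I would establish by playing off two incarnations of multiplication by $n$, where $n \geq 1$ is an integer killing $G$. Such an $n$ exists: strict finite flat commutative group schemes are killed by their order, and the condition $[n]_G = 0$ may be checked after a Kummer log flat base change, so it holds for every object of $(fin/S)_f$. Since $\sheafext^i(G_X,-)$ is biadditive, the endomorphism of $\sheafext^i(G_X,F)$ induced by $[n]_{G_X} = 0$ is exactly multiplication by $n$; hence $n$ annihilates $\sheafext^i(G_X,F)$ for every sheaf $F$ and every $i \geq 0$. On the other hand, for $F = \bb G_{m,X}^{trop} = \o M_X^{gp}$ the endomorphism of $\sheafext^i(G_X,F)$ induced by multiplication by $n$ on $F$ — which, by biadditivity again, is the very same multiplication by $n$ — is an isomorphism: the sheaf $\o M_X^{gp}$ is torsion-free, so $\times n$ is injective, and it is an epimorphism of klf sheaves because root stacks are klf covers on which characteristic sections acquire $n$-th roots. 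Thus multiplication by $n$ is simultaneously zero and invertible on each $\sheafext^i(G_X,\bb G_{m,X}^{trop})$, which forces all of these groups to vanish.

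Finally I would prove $\sheafext^1(G_X,\bb G_{m,X}) = 0$. As $\sheafext^1$ is a sheaf, I may verify this after a Kummer log flat cover, and so reduce to the case where $G_X$ is a strict finite flat commutative group scheme; here the vanishing is the classical statement underpinning Cartier duality (for $G = \bb Z/n\bb Z$ it already follows from the klf-surjectivity of $\times n$ on $\bb G_{m,X}$). I expect the main obstacle to be the purely topological step of matching the klf $\sheafext^1$ with its fppf counterpart for strict finite flat $G_X$, which I would treat through the comparison between the strict fppf and klf topoi, using that $\bb G_{m,X}$ is already a klf sheaf.
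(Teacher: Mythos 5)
Your handling of the tropical part is correct and is essentially the paper's own argument: the paper also applies $\sheafext^\bullet(G_X,-)$ to the sequence \ref{eqn:fundamental_exact_seq}, viewed in the klf topos so that the third term is the \emph{klf sheafification} $\bb G_{m,X}^{trop,klf}$ of $\bb G_{m,X}^{trop}$ (you leave this sheafification implicit, but your surjectivity argument via root stacks is really a statement about it), and it kills the relevant terms by the same tension between the two multiplications by $n$: $n$ annihilates $\sheafext^i(G_X,F)$ for every $F$ because $[n]_{G_X}=0$, while $\times n$ is an automorphism of $\bb G_{m,X}^{trop,klf}$ by torsion-freeness plus extraction of $n$-th roots on root stacks (this is \cite[Lemma 2.3.1]{Gill} in the paper). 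So \ref{Hom} and the isomorphism $\sheafext^1(G_X,\bb G_{m,X})\simeq\sheafext^1(G_X,\bb G_{m,X}^{log})$ are fine.

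The genuine gap is the remaining vanishing $\sheafext^1_{klf}(G_X,\bb G_{m,X})=0$, which you propose to deduce from the classical fppf statement by ``matching the klf $\sheafext^1$ with its fppf counterpart''. This matching is not the formality you hope for, and the fact that $\bb G_{m,X}$ is a klf sheaf does not suffice: that fact only says $\varepsilon_*\bb G_{m}=\bb G_{m}$ for the projection $\varepsilon$ from the klf topos to the strict fppf topos, i.e. it controls degree $0$. The comparison of $\sheafext^1$'s is governed by $R^1\varepsilon_*\bb G_{m}$, which is \emph{nonzero}: by a computation going back to Kato it is identified with $\o M^{gp}\otimes_{\bb Z}\bb Q/\bb Z$ --- this is precisely why klf $\bb G_m$-torsors, such as the root stacks of \ref{Klflat_morphism}, are strictly more plentiful than fppf line bundles. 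Consequently there can be extensions of klf sheaves of $G_X$ by $\bb G_{m}$ that are not, even klf-locally, pulled back from fppf extensions; in the spectral sequences comparing the two topologies, the term $\sheafhom_{fppf}(G_X,\o M^{gp}\otimes\bb Q/\bb Z)$ appears in total degree $1$ and is nonzero in general (for $G_X=\bb Z/n\bb Z$ it is $\o M^{gp}\otimes\tfrac{1}{n}\bb Z/\bb Z$), and showing that this term is exactly absorbed by $R^1\varepsilon_*(G_X^D)$ --- so that nothing survives into $\sheafext^1_{klf}(G_X,\bb G_{m})$ --- is the actual mathematical content of the step. This is why the paper does not argue it but cites \cite[Theorem 4.1]{Gill'}, a theorem of Gillibert. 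Note also that your parenthetical for $G=\bb Z/n\bb Z$ (resolve by $0\to\bb Z\to\bb Z\to\bb Z/n\bb Z\to 0$ and use klf-surjectivity of $\times n$ on $\bb G_{m}$) is correct but does not generalize: for $G=\mu_p$ or $\alpha_p$ in characteristic $p$ there is no such resolution, and the vanishing is a nontrivial theorem already in the fppf topology. To complete your proof, replace the final step by an appeal to \cite[Theorem 4.1]{Gill'}, exactly as the paper does.
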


\begin{proof}
 We have an exact sequence in $\cat{Ab}_{klf}/X$
\begin{equation}\label{eqn:fundamental_exact_seq_klf_site}
    0 \to  \mathbb{G}_{m,X_{}} \to \mathbb{G}_{m,X_{}}^{log} \to \mathbb{G}_{m,X_{}}^{trop,klf} \to 0,
\end{equation}
where $\mathbb{G}_{m,X_{}}^{trop,klf}$ denotes the klf sheafification of $\bb G_{m,X}^{trop}$.
This yields an exact sequence:
\begin{align*}
     0 &\to \sheafhom(G_{X_{}},\mathbb{G}_{m,X_{}}) \to \sheafhom(G_{X_{}},\mathbb{G}_{m,X_{}}^{log}) \to \sheafhom(G_{X_{}},\mathbb{G}_{m,X_{}}^{trop,klf})\\
     &\to \sheafext^1(G_{X_{}},\mathbb{G}_{m,X_{}}) \to \sheafext^1(G_{X},\mathbb{G}_{m,X_{}}^{log}) \to  \sheafext^1(G_{X_{}},\mathbb{G}_{m,X_{}}^{trop,klf}).
\end{align*}
Since $\mathbb{G}_{m,X_{}}^{trop,klf}$ has no torsion (cf. \cite[Lemma 2.3.1]{Gill}), the isomorphism \ref{Hom} follows.\\

The second isomorphism of \ref{Ext} follows from \cite[Theorem 4.1]{Gill'}. This, together with the exactness of \ref{eqn:fundamental_exact_seq_klf_site}, gives an injection $\sheafext^1(G_{X},\mathbb{G}_{m,X_{}}^{log}) \hookrightarrow \sheafext^1(G_{X},\mathbb{G}_{m,X_{}}^{trop, klf})$. Therefore, it suffices to show that $\sheafext^1(G_{X},\mathbb{G}_{m,X_{}}^{trop, klf})=0$. For any nonzero integer $n$, multiplication by $n$ is an automorphism of $\mathbb{G}_{m,X_{}}^{trop, klf}$ (cf.\cite[Lemma 2.3.1]{Gill}), hence of $\sheafext^1(G_{X},\mathbb{G}_{m,X_{}}^{trop, klf})$ by functoriality. By \cite[\S 2]{katoJuin}, there exists a nonzero integer $n_0$ which kills $G_{X}$. Thus, the multiplication by $n_0$ automorphism of $\sheafext^1(G_{X},\mathbb{G}_{m,X_{}}^{trop, klf})$ is zero and $\sheafext^1(G_{X},\mathbb{G}_{m,X_{}}^{trop, klf})$ is trivial as claimed.
\end{proof}

\begin{lemma}\label{raynaudlog}
 Let $f: X \to S $ be a log curve and let $G$ be an object in $(fin/S)_f$. We have a canonical isomorphism:
 $$R^1_{klf}f_*(G_{X_{}}^D) \iso \sheafhom(G, R^1_{klf}f_*\mathbb{G}_{m,X_{}}^{log}). $$
 \end{lemma}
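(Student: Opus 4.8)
The plan is to deduce the isomorphism from the adjunction identity
$f_*\sheafhom_X(f^*G,-)\cong\sheafhom_S(G,f_*(-))$ of functors $\cat{Ab}_{klf}/X\to\cat{Ab}_{klf}/S$. This identity holds because, for an $S$-sheaf $H$, the sheaf $f_*\sheafhom_X(f^*G,\cF)$ represents $H\mapsto\Hom_X(f^*H\otimes f^*G,\cF)=\Hom_X(f^*(H\otimes G),\cF)=\Hom_S(H\otimes G,f_*\cF)$, using that $f^*$ is exact on the klf topos and commutes with the tensor product of abelian sheaves. Applying this to $\cF=\bb G^{log}_{m,X}$ and passing to derived functors, I would compare the two hypercohomology spectral sequences computing the derived functors of the composite (equivalently, the two sides of the derived adjunction $Rf_*R\sheafhom_X(f^*G,-)\cong R\sheafhom_S(G,Rf_*(-))$) and read off the degree-$1$ term.

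First I would treat the $X$-side. Since Cartier duality commutes with base change, $\sheafhom_X(f^*G,\bb G_{m,X})=f^*G^D=G_X^D$, and \ref{Hom and Ext} upgrades this to $\sheafhom_X(f^*G,\bb G^{log}_{m,X})\cong G_X^D$ while giving $\sheafext^1_X(f^*G,\bb G^{log}_{m,X})=0$. In the five-term exact sequence of the spectral sequence $R^pf_*\sheafext^q_X(f^*G,\bb G^{log}_{m,X})$ the term $f_*\sheafext^1_X(f^*G,\bb G^{log}_{m,X})$ then vanishes, so the degree-$1$ value of the composite functor is identified with $R^1_{klf}f_*G_X^D$.

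Next I would analyse the $S$-side, governed by the spectral sequence $E_2^{p,q}=\sheafext^p_S(G,R^q_{klf}f_*\bb G^{log}_{m,X})$. By \ref{f_*M_X=M_S} one has $f_*\bb G^{log}_{m,X}=\bb G^{log}_{m,S}$, so the $q=0$ row is $\sheafext^p_S(G,\bb G^{log}_{m,S})$, and the $S$-version of \ref{Hom and Ext} kills $E_2^{1,0}=\sheafext^1_S(G,\bb G^{log}_{m,S})$. The five-term sequence then produces an injection $R^1_{klf}f_*G_X^D\hookrightarrow E_2^{0,1}=\sheafhom_S(G,R^1_{klf}f_*\bb G^{log}_{m,X})$ whose cokernel is controlled by the $d_2$-differential into $E_2^{2,0}=\sheafext^2_S(G,\bb G^{log}_{m,S})$.

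The main obstacle is the surjectivity of this edge map, i.e. the vanishing of $\sheafext^2_S(G,\bb G^{log}_{m,S})$ (equivalently, that $d_2=0$). I would obtain it by pushing the argument of \ref{Hom and Ext} one cohomological degree further: from $0\to\bb G_{m,S}\to\bb G^{log}_{m,S}\to\bb G^{trop,klf}_{m,S}\to0$, together with the fact that multiplication by an integer $n_0$ killing $G$ is an automorphism of the torsion-free sheaf $\bb G^{trop,klf}_{m,S}$, one gets $\sheafext^i_S(G,\bb G^{trop,klf}_{m,S})=0$ for all $i\ge1$, whence $\sheafext^2_S(G,\bb G^{log}_{m,S})\cong\sheafext^2_S(G,\bb G_{m,S})$. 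This reduces the problem to the vanishing of $\sheafext^2_{klf}(G,\bb G_{m,S})$ for $G\in(fin/S)_f$, which is the delicate input (the degree-$1$ analogue being the cited result of Gillibert). I expect establishing this degree-$2$ vanishing — or, failing that, arguing that $d_2=0$ by descending along a klf cover to the strict case and comparing with the classical Raynaud isomorphism of \ref{theorem:Raynaud} — to be the crux of the proof.
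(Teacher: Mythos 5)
Your setup coincides with the paper's: the composite-functor identity $f_*\sheafhom(f^*G,-)=\sheafhom(G,f_*(-))$, the two Grothendieck spectral sequences, killing $f_*\sheafext^1(G_X,\bb G^{log}_{m,X})$ via \ref{Hom and Ext}, and killing $E_2^{1,0}=\sheafext^1(G,\bb G^{log}_{m,S})$ via the same argument run over $S$. Your reduction of the lemma to the vanishing of $d_2\colon\sheafhom(G,R^1_{klf}f_*\bb G^{log}_{m,X})\to\sheafext^2(G,\bb G^{log}_{m,S})$ is also correct. But the crux is exactly the step you leave open, and neither of your proposed routes closes it. The vanishing of $\sheafext^2_{klf}(G,\bb G_{m,S})$ is not available: it is proved neither in the paper nor in the cited literature (Gillibert's theorem is genuinely a degree-$1$ statement), and the whole design of Raynaud's argument, which the paper follows, is to never need it. Your fallback (pass to a klf cover where $G$ becomes strict and ``compare with \ref{theorem:Raynaud}'') is circular: \ref{theorem:Raynaud} concerns fppf cohomology and $\on{Pic}$, whereas what you would need is precisely the klf statement for $\bb G^{log}_m$-cohomology and strict $G$, i.e. the lemma being proved; and $H^1_{klf}$ genuinely differs from $H^1_{fppf}$ here (that difference is the point of the paper), so no formal comparison can substitute for the missing argument.

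The missing idea is Raynaud's section trick, which forces $d_2=0$ without any $\sheafext^2$ vanishing. Since the statement is an isomorphism of sheaves on $S_{klf}$, it can be checked locally on $S$, so one may assume $f$ admits a section $s$ through the strict (equivalently, smooth) locus. The pullback $s^{-1}$ is exact, so the functor $F\mapsto\sheafhom(G,s^{-1}F)$ has $i$-th derived functor $\sheafext^i(G,s^{-1}F)$, and the natural transformation $f_*\to s^{-1}$ induces a map $R^2H(\bb G^{log}_{m,X})\to\sheafext^2(G,s^{-1}\bb G^{log}_{m,X})$. Precomposing this with the edge map $\gamma\colon\sheafext^2(G,f_*\bb G^{log}_{m,X})\to R^2H(\bb G^{log}_{m,X})$ gives exactly the map induced by $f_*\bb G^{log}_{m,X}\to s^{-1}\bb G^{log}_{m,X}$, which is an isomorphism because both sheaves equal $\bb G^{log}_{m,S}$ (by \ref{f_*M_X=M_S} for the source, and by strictness of $s$ for the target). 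Hence $\gamma$ is injective; since the image of $d_2$ is the kernel of $\gamma$ by exactness of the five-term sequence at $E_2^{2,0}$, we get $d_2=0$, which is the surjectivity you were missing.
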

 
 \begin{proof} We follow the proof of \cite[Proposition 6.2.1]{Raynaud1970Specialisation-}. Consider the functor
\begin{align*}
    H \colon \cat{Ab}_{klf}(X) & \to \cat{Ab}_{klf}(S) \\
    F & \mapsto f_*\sheafhom(f^*G,F) = \sheafhom(G,f_* F).
\end{align*}
Since we have written $H$ as a composite of functors in two different ways, we get two convergent Grothendieck spectral sequences (cf.\cite{GrotSpectral}):
\begin{center}
\begin{align*}
R^{p+q}H(F) \Rightarrow & R^qf_*\sheafext^p(f^*G,F)\\
R^{p+q}H(F) \Rightarrow & \sheafext^q(G,R^pf_*F).
\end{align*}
\end{center}
These spectral sequences yield exact sequences of low-degree terms:
\begin{equation}
    0 \to R^1f_*(\sheafhom(G_{X_{}},F)) \to R^1H(F) \to f_*\sheafext^1(G_{X_{}},F)\label{eqn:short_exact_lowdeg_1}
\end{equation}
and
\begin{equation}
    0 \to \sheafext^1(G_{},f_*F)  \to R^1H(F) \to \sheafhom(G_{},R^1f_*F) \to \sheafext^2(G_{},f_*F) \to R^2H(F).\label{eqn:short_exact_lowdeg_2}
\end{equation}

By \ref{Hom and Ext}, the rightmost term of \ref{eqn:short_exact_lowdeg_1} vanishes if $F=\bb G_{m,X}^{log}$, so
\[
R^1f_*(\sheafhom(G_{X_{}},\mathbb{G}_{m,X_{}}^{log})) = R^1H(\mathbb{G}_{m,X_{}}^{log}).
\]
Plugging this into \ref{eqn:short_exact_lowdeg_2}, we get an exact sequence
\[
0 \to R^1_{klf}f_*(G_{X_{}}^D)  \to \sheafhom(G_{},R^1_{klf}f_*\mathbb{G}_{m,X_{}}^{log}) \to \sheafext^2_{klf}(G_{},f_*\mathbb{G}_{m,X_{}}^{log})   \xrightarrow{\gamma} R^2H(\mathbb{G}_{m,X_{}}^{log}).
\]
It remains to see that $\gamma$ is injective to conclude. Working locally on $S$, we may assume that $X_{} \to S$ has a section $s$ through the strict (equivalently, smooth) locus. Denote by $s^{-1}$ the pullback functor $\cat{Ab}_{klf}/X \to \cat{Ab}_{klf}/S$ along $s$. For any morphism of log schemes $T \to S$ we have $s^{-1}F(T \to S)=F(T \to S \xrightarrow{s} X)$ so $s^{-1}$ commutes with kernels, i.e. is left exact. 
Since $s^{-1}$ is a left adjoint, it is also right exact and hence exact  (see also  \cite[\href{https://stacks.math.columbia.edu/tag/00XR}{Tag 00XR}]{stacks-project}). Therefore, precomposition with $s^{-1}$ preserves left exactness and commutes with $R^i$ for all $i>0$. In particular, the functor
\begin{align*}
    H' \colon \cat{Ab}_{klf}(X) & \to \cat{Ab}_{klf}(S) \\
    F & \mapsto \sheafhom(G,s^{-1}F)
\end{align*}
is left exact and $R^2H' = F \mapsto \sheafext^2(G,s^{-1}F)$. The natural transformation $f_* \to s^{-1}$ yields a morphism $R^2H \to R^2H'$. The composite
\begin{equation}\label{lion}
    \sheafext^2(G_{},f_{*}\bb G_{m,X}^{log}) \xrightarrow{\gamma} R^2H(\mathbb{G}_{m,X_{}}^{log}) \to R^2H'(\bb G_{m,X}^{log}) = \sheafext^2(G_{},s^{-1}\bb G_{m,X}^{log})
\end{equation}
is an isomorphism since $f_*\mathbb{G}_{m,X_{}}^{log}=\mathbb{G}_{m,S}^{log}$ by \ref{f_*M_X=M_S} and $s^{-1}\bb G_{m,X}^{log} = \bb G_{m,S}^{log}$ by the strictness of $s$ combined with the equality $s^{-1}\bb G_{m,X} = \bb G_{m,S}$. Therefore, $\gamma$ is injective as claimed.
\end{proof}

\begin{remark}
    The previous lemma remains true by an almost identical proof if one replaces $\mathbb{G}_m^{log}$ by $\mathbb{G}_m$. In particular, for a finite flat commutative group scheme $G$,
   \[
   \sheafhom(G, R^1_{klf}f_*\mathbb{G}_{m,X_{}}^{})\simeq \sheafhom(G, R^1_{klf}f_*\mathbb{G}_{m,X_{}}^{log}).
   \]
 We choose to use $\bb G_m^{log}$ and not $\bb G_m$ because some natural, geometrically meaningful subgroup of $R^1_{klf}f_*\mathbb{G}_{m,X_{}}^{log}$ is represented by the log Jacobian (which is a priori defined as parametrizing some \emph{strict \'etale} $\bb G_m^{log}$-torsors), while the comparison between $R^1_{klf}f_*\mathbb{G}_{m,X_{}}$ and $R^1_{\acute etale}f_*\mathbb{G}_{m,X_{}}$ is less straightforward.
\end{remark}

\begin{proposition}\label{pointedlog}
Let $f: X \to S$ be a log curve and let $G$ an element of $(fin/S)_f$. We have a canonical isomorphism:
 $$H^1_{klf}(X,G)/H^1_{klf}(S,G) \xrightarrow{\simeq} R^1_{klf}f_*G_{X}(S). $$
\end{proposition}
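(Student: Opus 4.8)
The plan is to run the Grothendieck spectral sequence for the composite $\Gamma_X = \Gamma_S \circ f_*$ on klf sites, exactly as in the proof of \ref{raynaudlog}. Since $f_* \colon \cat{Ab}_{klf}(X) \to \cat{Ab}_{klf}(S)$ admits the exact left adjoint $f^*$, it preserves injectives, so there is a convergent spectral sequence $H^p(S, R^q_{klf} f_* G_X) \Rightarrow H^{p+q}_{klf}(X, G_X)$ whose five-term exact sequence of low-degree terms reads
\[
0 \to H^1(S, f_* G_X) \to H^1_{klf}(X, G_X) \xrightarrow{e} (R^1_{klf} f_* G_X)(S) \xrightarrow{d_2} H^2(S, f_* G_X).
\]

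First I would identify $f_* G_X$ with $G$. Pulling $G$ back along the projection $X_T \to T$ gives a natural map $G(T) \to (f_* G_X)(T) = G(X_T)$, and I claim it is an isomorphism. After a Kummer log flat cover $S' \to S$ the group $G_{S'}$ becomes finite, flat and strict, hence affine over $S'$; for $T \to S'$ one then has $G(X_T) = \Hom_{\cO_T\text{-alg}}(\cO(G_T), f_* \cO_{X_T})$, and since $X/S$ is prestable we have $f_* \cO_{X_T} = \cO_T$ universally (\ref{definition:prestable}), so $G(X_T) = \Hom_{\cO_T\text{-alg}}(\cO(G_T), \cO_T) = G(T)$. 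Klf descent along $S' \to S$ then yields $f_* G_X = G$ over $S$, and the five-term sequence becomes
\[
0 \to H^1_{klf}(S, G) \xrightarrow{f^*} H^1_{klf}(X, G) \xrightarrow{e} (R^1_{klf} f_* G_X)(S) \xrightarrow{d_2} H^2_{klf}(S, G).
\]
Exactness at the first two spots already gives that $f^*$ is injective and that $e$ induces an injection $H^1_{klf}(X,G)/H^1_{klf}(S,G) \hookrightarrow (R^1_{klf} f_* G_X)(S)$.

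It remains to prove surjectivity of $e$, i.e. the vanishing of $d_2$, and for this I would exploit a section of $f$. The smooth locus of $X/S$ is smooth and surjective over $S$, hence admits a section \'etale-locally on $S$, and a section $s$ splits $f^*$ via $s^*$. Over a base equipped with such a section, the identity $s^* \circ f^* = \id$ forces the edge maps $H^p(S,G) \to H^p_{klf}(X,G)$ to be split injective for all $p$, which in turn forces every differential of the spectral sequence landing in the bottom row $E_\bullet^{\bullet,0}$ to vanish; in particular $d_2 = 0$ and $e$ is surjective. The rigidified-torsor reformulation makes this concrete: a $G$-torsor on $X$ trivialized along $s$ has no nontrivial automorphisms, since an automorphism is an element of $G(X) = (f_* G_X)(S) = G(S)$ restricting to the identity on $s(S)$, hence trivial; thus rigidified torsors form a \emph{sheaf} on the base, identified on the one hand with $H^1_{klf}(X,G)/H^1_{klf}(S,G)$ and on the other, via the sheaf property of $R^1_{klf} f_*$, with $(R^1_{klf} f_* G_X)(S)$.

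The main obstacle is globalizing this section argument: $f$ need not admit a section over all of $S$, only \'etale-locally, and $d_2(\sigma)$ lives in the global group $H^2_{klf}(S,G)$, which is not detected by passage to an arbitrary cover. I would resolve this by descending rigidified torsors along a strict \'etale cover $S' \to S$ over which $f$ has a section: represent a given $\sigma \in (R^1_{klf} f_* G_X)(S)$ by a torsor on $X_{S'}$ rigidified along the section, and glue using that rigidified torsors form a sheaf, so their descent is effective and the a priori $H^2$-valued gerbe obstruction is canonically killed. The delicate point, and the step I expect to require the most care, is that the sections chosen over the two factors of $S' \times_S S'$ differ, so one must compare rigidifications along distinct sections on the overlaps and check that the resulting descent datum remains effective; this is where the argument genuinely uses the structure of $X/S$ rather than formal spectral-sequence bookkeeping.
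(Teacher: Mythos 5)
Your proposal is, at its core, the paper's own proof of \ref{pointedlog}: the same Leray/Grothendieck spectral sequence for $\Gamma_S\circ f_*$, the same identification $f_*G_X\simeq G$, and the same use of a section through the smooth locus to make the edge maps $H^p_{klf}(S,G)\to H^p_{klf}(X,G)$ injective and thereby kill $d_2$. The only internal difference is how $f_*G_X\simeq G$ is proved: you reduce to the strict case by a Kummer log flat cover and use affineness of $G$ together with $f_*\cO_{X_T}=\cO_T$, whereas the paper writes $G$ as the Cartier bidual $\sheafhom(G^D,\bb G_m)$ and combines $f_*\sheafhom(f^*G^D,\bb G_{m,X})\simeq\sheafhom(G^D,f_*\bb G_{m,X})$ with $f_*\bb G_{m,X}=\bb G_{m,S}$; both routes rest on cohomological flatness of prestable curves and both are correct.

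Where you go beyond the paper is your final globalization paragraph, and you should know that the paper does not carry out that step at all: its proof opens with ``working locally on $S$ we may assume there is a section'' and ends with the resulting local short exact sequence, so the descent problem you raise (comparing rigidifications along the two pullback sections over $S'\times_S S'$) is exactly what it elides. So you have proved everything the paper proves, and your suspicion that the remaining gluing is genuinely hard is correct --- in fact it cannot be done in the stated generality. Already the fppf analogue underlying \ref{theorem:Raynaud} fails without a section: take $S=\Spec \bb Q_p$ and $X$ a smooth genus-one curve with no rational point whose Jacobian $E$ has a rational point $P$ of exact order $n$; by Tate local duality one may choose the torsor $X$ so that Lichtenbaum's obstruction $\langle P,[X]\rangle\in\on{Br}(\bb Q_p)$ is nonzero, and then $P$, viewed in $\Hom(\bb Z/n\bb Z,\Pic_{X/\bb Q_p})=(R^1_{fppf}f_*\mu_n)(S)$, is not in the image of $H^1_{fppf}(X,\mu_n)$, since that image factors through $\Pic(X)$ and hence lies in the kernel of the obstruction map. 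Equipping everything with the trivial log structure makes $X/S$ a log curve for which klf and fppf cohomology agree, so the same example shows the $H^2_{klf}(S,G)$-obstruction is a genuine Brauer-type obstruction that no descent of rigidified torsors can remove. The statement of \ref{pointedlog} (and its downstream use in \ref{mainrslt}) must therefore be read \'etale-locally on $S$, i.e.\ in the presence of a section; with that reading your argument is complete and coincides with the paper's, and the delicate gluing you flagged should be dropped rather than attempted.
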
 
\begin{proof}

Working locally on $S$ we may assume there is a section $S \to X$ through the smooth (and strict) locus of $X/S$. Let us write the Leray sequence associated to $f$ and $G_{X}$:

\begin{small}
$0 \to H^1_{klf}(S, f_*G_{X}) \to H^1_{klf}(X,G) \to H^0(S,R^1_{klf}f_*G_{X}) \to H^2_{klf}(S, f_*G_{X}) \xrightarrow{\delta} H^2_{klf}(X,G).$
\end{small}
\newline
 On the other hand, we have
 \begin{align*}
 f_*G_{X}=f_* \sheafhom(G_{X}^D,\mathbb{G}_{m,X}) & = f_*\sheafhom(f^*G^D,\mathbb{G}_{m,X})\\
 &\simeq \sheafhom(G^D,f_*\mathbb{G}_{m,X})\\
 &=\sheafhom(G^D,\mathbb{G}_{m,S})=G
 \end{align*}
 where the second to last equality follows from the fact that $f_*\mathcal{O}_{X} \simeq \mathcal{O}_S$. Hence, $f_*G_{X} \simeq G$.\\
In addition, $\delta$ is injective since $f$ has a section. Plugging this into the Leray sequence yields the desired exact sequence
\begin{equation*}
0 \to H^1_{klf}(S, G) \to H^1_{klf}(X,G) \to H^0(S,R^1_{klf}f_*G_{X}) \to 0.
\end{equation*}
 \end{proof}

\subsubsection{Existence of extensions}

\begin{theorem}\label{mainrslt}
Let $X/S $ be a log curve and let $G$ be a element of $(fin/S)_f$. There is a canonical isomorphism
 \[
 H^1_{klf}(X_{},G_{})/H^1_{klf}(S,G) \simeq   {\Hom}(G^D, \mathrm{LogJac}_{X/S}).
 \]
\end{theorem}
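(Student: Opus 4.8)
The plan is to deduce the statement by feeding the Cartier dual $G^D$ into the two preliminary results Proposition~\ref{pointedlog} and Lemma~\ref{raynaudlog}, and then to identify the target of the resulting $\sheafhom$ with $\Logjac_{X/S}$.

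First I would apply Lemma~\ref{raynaudlog} to $G^D$ rather than to $G$. Since the Cartier dual functor preserves $(fin/S)_f$ and is involutive, $G^D$ is again an object of $(fin/S)_f$ with $(G^D)^D \simeq G$, so the lemma gives
\[
R^1_{klf}f_*(G_X) = R^1_{klf}f_*\big((G^D)_X^D\big) \iso \sheafhom\big(G^D, R^1_{klf}f_*\bb G_{m,X}^{log}\big).
\]
Proposition~\ref{pointedlog} identifies the left-hand side, on global sections, with $H^1_{klf}(X,G)/H^1_{klf}(S,G)$, so that passing to sections over $S$ would yield
\[
H^1_{klf}(X,G)/H^1_{klf}(S,G) \iso \Hom\big(G^D, R^1_{klf}f_*\bb G_{m,X}^{log}\big).
\]
This part is essentially formal once biduality in $(fin/S)_f$ is invoked.

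The substance of the proof is then the identification $\Hom(G^D, R^1_{klf}f_*\bb G_{m,X}^{log}) = \Hom(G^D, \Logjac_{X/S})$. One inclusion comes from the natural embedding $\Logjac_{X/S} \hra R^1_{klf}f_*\bb G_{m,X}^{log}$, which is legitimate because $\Logjac$ satisfies klf descent by Proposition~\ref{proposition:descent_for_logjac} and the klf topology is generated by strict fppf covers and root stacks. For the reverse inclusion I would show that every homomorphism $G^D \to R^1_{klf}f_*\bb G_{m,X}^{log}$ factors through $\Logjac$, using that $G^D$ is killed by some integer $n$. Composing with the degree map $R^1_{klf}f_*\bb G_{m,X}^{log} \to \bb Z$ gives a morphism from the torsion sheaf $G^D$ to the torsion-free sheaf $\bb Z$, which vanishes; hence the image is of degree $0$. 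For bounded monodromy I would lift the image of an $n$-torsion section to $\tilde x \in \sheafhom(\ca H_{1,X/S},\bb G_{m,S}^{trop})$: then $n\tilde x$ lies in $\ca Z^{\ca E}$ acting through the intersection pairing, and intersection-pairing classes have bounded monodromy; since the partial monoidal order on $\o M^{gp}$ is torsion-free, bounded monodromy of $n\tilde x$ forces it for $\tilde x$. Thus the image lands in $(R^1_{klf}f_*\bb G_{m,X}^{log})^{\deg=0,\dagger} = \Logjac_{X/S}$, as in Definition~\ref{definition:log_jacobians}.

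The hard part will be making this last identification fully rigorous, namely reconciling the three incarnations of the relevant sheaf: $\Logjac$ as the strict-\'etale sheafification of a bounded monodromy degree $0$ presheaf (Definition~\ref{definition:log_jacobians}), its realization as a subsheaf of the klf sheaf $R^1_{klf}f_*\bb G_{m,X}^{log}$, and the klf descent of Proposition~\ref{proposition:descent_for_logjac} that bridges the strict \'etale and klf topologies. In particular one must check that no section of $R^1_{klf}f_*\bb G_{m,X}^{log}$ lying outside $\Logjac$ can receive a nonzero map from the finite group $G^D$, i.e. that degree $0$ and bounded monodromy are genuinely automatic on $n$-torsion; by contrast, the duality step and the invocation of Proposition~\ref{pointedlog} and Lemma~\ref{raynaudlog} are comparatively routine.
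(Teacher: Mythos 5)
Your proposal is correct and follows essentially the same route as the paper: combine \ref{raynaudlog} (applied to $G^D$ and using biduality in $(fin/S)_f$) with \ref{pointedlog}, then show that any homomorphism from the torsion sheaf $G^D$ into $R^1_{klf}f_*\bb G_{m,X}^{log}$ necessarily lands in $\Logjac_{X/S}$. Your element-wise factorization argument (degree kills torsion, and saturation promotes bounded monodromy from $n\tilde x$ to $\tilde x$) is precisely the paper's statement that the quotient $R^1_{klf}f_*\bb G_{m,X}^{log}/\Logjac_{X/S}$ is torsion-free, which the paper proves by observing that the stalks of $\sheafhom(\ca H_{1,X/S},\bb G_m^{trop})^\dagger$ are saturated sublattices of those of $\sheafhom(\ca H_{1,X/S},\bb G_m^{trop})$.
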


\begin{proof} 
    By \ref{raynaudlog} and \ref{pointedlog}, it suffices to show that the natural map
    \[
    \sheafhom(G^D, \operatorname{LogJac_{X/S}}) \to \sheafhom(G^D, R^1_{klf}f_*\mathbb{G}_{m,X_{}}^{log})
    \]
    is an isomorphism. Since $G^D$ is torsion, this reduces to proving that the quotient $$R^1_{klf}f_*\mathbb{G}_{m,X_{}}^{log}/\operatorname{LogJac_{X/S}}$$ is torsion-free. Quotienting the numerator and denominator by $\on{Pic}^0$, we get
    \begin{align*}
        R^1_{klf}f_*\mathbb{G}_{m,X_{}}^{log} / \operatorname{LogJac}_{X/S} 
        & =R^1_{klf}f_*\mathbb{G}_{m,X_{}}^{trop}/\operatorname{TroJac}_{X/S} \\
        & = \left(\sheafhom(\ca H_{1,X/S},\bb G_m^{trop})/\ca H_{1,X/S}\right)/\left(\sheafhom(\ca H_{1,X/S},\bb G_m^{trop})^\dagger/\ca H_{1,X/S}\right) \\
        & = \sheafhom(\ca H_{1,X/S},\bb G_m^{trop})/\sheafhom(\ca H_{1,X/S},\bb G_m^{trop})^\dagger,
    \end{align*}
    which is torsion-free since the stalks of $\sheafhom(\ca H_{1,X/S},\bb G_m^{trop})^\dagger$ are saturated sublattices of the stalks of $\sheafhom(\ca H_{1,X/S},\bb G_m^{trop})$.
\end{proof}

\begin{corollary}\label{maincor}
    Let $X/S$ be a log curve with $S$ log regular. Let $G$ be an element of $(fin/S)_f$ such that $G^D$ is Kummer log \'etale over $S$. Then, any relative fppf $G_U$-torsor over $X_U$ extends uniquely to a relative klf $G$-torsor over $X$, where $G$ is the Cartier dual of $G^{D}/S$.
\end{corollary}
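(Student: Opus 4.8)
The plan is to deduce the statement from the main theorem \ref{mainrslt} by reformulating it as a bijectivity of restriction for a suitable sheaf of homomorphisms, and then to prove that bijectivity using the properness of the log Jacobian together with \ref{lemma:restriction_group_is_bijective}. First I would unwind what a relative torsor is on each side. By \ref{raynaudlog} and the torsion-freeness argument in the proof of \ref{mainrslt}, there is a canonical isomorphism of sheaves on $S$
\[
R^1_{klf}f_*G \;\simeq\; \sheafhom_S(G^D,\Logjac_{X/S}),
\]
so relative klf $G$-torsors on $X$ are exactly the global sections of $\mathcal N:=\sheafhom_S(G^D,\Logjac_{X/S})$. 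Over $U$ the log structure is trivial, $X_U/U$ is smooth, the Kummer log flat and fppf topologies agree, and \ref{ExactLogPic} gives $\Trojac_{X/S}|_U=0$, hence $\Logjac_{X/S}|_U=\Pic^0_{X_U/U}$. Thus, by Raynaud (\ref{theorem:Raynaud}) and since $G^D_U$ is torsion, relative fppf $G_U$-torsors on $X_U$ are precisely the sections over $U$ of the same sheaf $\mathcal N$. Both identifications come from the single construction of \ref{raynaudlog}, so they are compatible with restriction along $U\hookrightarrow S$. The corollary is therefore equivalent to the assertion that $\mathcal N(S)\to\mathcal N(U)$ is bijective.

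Next I would reduce $\mathcal N$ to a finite flat strict group after a Kummer log \'etale base change. Since $G^D$ is Kummer log \'etale over $S$, there is a Kummer log \'etale cover $S'\to S$ over which $G^D_{S'}$ is strict and finite \'etale; working \'etale-locally we may further assume it is a constant finite group $\underline M$, so that $\mathcal N_{S'}=\sheafhom_{S'}(\underline M,\Logjac_{X_{S'}/S'})$ is a finite product of torsion subgroups $\Logjac_{X_{S'}/S'}[n_i]$. Each such $n$-torsion group is an object of $(fin/S')_f$, so after enlarging $S'$ by a further Kummer log \'etale cover we may assume every $\Logjac[n_i]$, and hence $\mathcal N_{S'}$, is a strict finite flat commutative $S'$-group; note that $S'$ remains log regular because $S'\to S$ is log \'etale. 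The crucial point here is that $\Logjac[n_i]$ is genuinely (log-)finite, which is a direct consequence of the \emph{properness} of $\Logjac$ from \ref{theorem:representability_and_properties_logjac}; this is exactly where the hypotheses bite and where the classical $\Pic^0$ would fail.

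Finally I would conclude by descent. Writing $U'=U\times_S S'$ for the (dense, strict) triviality locus of $S'$, \ref{lemma:restriction_group_is_bijective} applied to the strict finite flat group $\mathcal N_{S'}$ over the log regular scheme $S'$ shows that $\mathcal N(S')\to\mathcal N(U')$ is bijective, and the same holds over $S''=S'\times_S S'$ since $\mathcal N_{S''}$ is again strict finite flat over the log regular scheme $S''$. As $\mathcal N$ is a sheaf for the Kummer log \'etale topology and $S'\to S$ is a covering, the groups $\mathcal N(S)$ and $\mathcal N(U)$ are the equalizers of the two restriction maps $\mathcal N(S')\rightrightarrows\mathcal N(S'')$ and $\mathcal N(U')\rightrightarrows\mathcal N(U'')$ respectively, where $U''$ is the triviality locus of $S''$; the bijectivity over $S'$ and $S''$ then forces $\mathcal N(S)\to\mathcal N(U)$ to be bijective. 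Surjectivity gives existence of the extension and injectivity its uniqueness (the latter also following directly from \ref{Uniqueness}). I expect the main obstacle to be the finiteness claim for $\mathcal N$ in the second step—identifying $\sheafhom(G^D,\Logjac)$ with a log finite flat group and controlling the formation of this internal $\sheafhom$ under base change—rather than the formal descent carried out in the last step.
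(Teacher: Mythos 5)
Your first step is correct, and it reformulates the corollary exactly as the paper implicitly does: by \ref{raynaudlog}, \ref{pointedlog} and the torsion-freeness computation in the proof of \ref{mainrslt}, everything reduces to bijectivity of $\mathcal N(S)\to\mathcal N(U)$ for $\mathcal N=\sheafhom(G^D,\Logjac_{X/S})$. The gap is in your second step, and it is fatal to the argument as written: you claim that after a Kummer log \'etale cover $S'\to S$ the groups $\Logjac_{X_{S'}/S'}[n_i]$, hence $\mathcal N_{S'}$, become strict finite flat. This fails whenever some $n_i$ is not invertible on $S$, and such cases are squarely inside the scope of the corollary. Take $G=\mu_p$ over a base of residue characteristic $p$: then $G^D=\bb Z/p\bb Z$ is strict \'etale, hence Kummer log \'etale, over $S$ (this is precisely the first example in \ref{ext_torsors}), and $\mathcal N=\Logjac_{X/S}[p]$. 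The paper's own discussion of $(fin/S)_f$ states that the \emph{smallest} klf cover over which $\on{LogPic}_{X/S}[n]$ becomes an object of $(fin/S)_c$ is a root stack of index $n$ (adjoining $n$-th roots of the smoothing parameters, cf.\ \cite[Section 4.2]{Holmes2022Logarithmic-mod}). When $n=p$ is not invertible, that root stack is Kummer log flat but not Kummer log \'etale, and no Kummer log \'etale cover can factor through it: on characteristic monoids, a Kummer log \'etale cover only divides the edge length $\delta$ by integers invertible on $S$, so $\delta$ never becomes $p$-divisible and $\on{TroJac}[p]$ (hence $\Logjac[p]$) never becomes a strict finite flat group.

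You also cannot repair this by replacing ``Kummer log \'etale'' with ``Kummer log flat'' in your covers: your third step needs $S'$, and more importantly $S''=S'\times_S S'$, to be log regular in order to apply \ref{lemma:restriction_group_is_bijective}, and log regularity is preserved by Kummer log \'etale covers (they are log smooth) but not by strict fppf ones — e.g.\ $\Spec \bb Q[x]/(x^2)\to\Spec\bb Q$ is a strict fppf cover of a regular scheme by a non-reduced one. The paper proves the bijectivity of $\on{Hom}(G^D,\Logjac_{X/S})\to\on{Hom}(G^D_U,J)$ by an entirely different mechanism: by \cite[Theorem 6.11]{HMOP}, $\Logjac_{X/S}$ is the N\'eron model of $J=\Pic^0_{X_U/U}$ in the \emph{log smooth} topology, and since $G^D$ is Kummer log \'etale, hence log smooth, over $S$, the N\'eron mapping property applies to it (compatibility with the group laws follows from the uniqueness part applied to $G^D\times_S G^D$). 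This is exactly where the hypothesis on $G^D$ is used, and it treats invertible and non-invertible orders uniformly, which is the whole point of examples like $\mu_p$-torsors; it is also why the paper can prove the statement in one line, whereas your route, even where it works (all $n_i$ invertible on $S$), recovers only a special case.
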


\begin{proof}
By \cite[Theorem 6.11]{HMOP}, $\mathrm{LogJac}_{X/S}$ is the Néron model (in the log smooth topology) of the Jacobian $J$ of $X_U/U$. In particular, $$\mathrm{LogJac}_{X/S}(G^D) \to \mathrm{LogJac}_{X/S}(G_U^D)=J(G_U^D)$$ is bijective and we conclude using \ref{mainrslt}.
\end{proof}

\subsection{Examples and non-examples}\label{section:examples}

\subsubsection{Torsors under $\mu_n$, $\bb Z/n\bb Z$ and $\on{LogPic}[n]$}

 The first proposition follows immediately from  \ref{maincor}. 
\begin{proposition}[Some groups with \'etale Cartier duals]\label{ext_torsors}
   Let $S$ be log regular and let $X/S$ be a log curve. Let $U\subset S$ be the dense open where $\ca O_S^\times \hra M_S$ is an isomorphism. In particular, $X_U/U$ is strict and smooth. Then for any log finite and flat commutative group $G/S$ whose Cartier dual $G^D$ is log \'etale, $G_U$-torsors on $X_U/U$ extend uniquely to $X/S$. Examples include
   \begin{itemize}
       \item $G=\mu_n$ for any $n$ (so that $G^D=\bb Z/n\bb Z$).
       \item Any $G/S$ whose order is invertible on $S$ (or equivalently which is killed by an integer invertible on $S$), e.g. $\bb Z/n\bb Z$ or $\on{LogPic}[n]$ for $n\in \ca O_S^\times(S)$.
   \end{itemize}

\end{proposition}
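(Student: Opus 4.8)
The plan is to deduce the whole statement from \ref{maincor}, whose single substantive hypothesis is that the Cartier dual $G^D$ be Kummer log \'etale over $S$. I would first record that, for the groups in play, the condition ``$G^D$ log \'etale'' of the statement coincides with ``$G^D$ Kummer log \'etale'': each such $G^D$ is a torsion object of $(fin/S)_f$, hence finite over $S$ after a klf cover, and a log modification has proper birational underlying morphism, so cannot occur nontrivially in a quasi-finite morphism (a finite birational morphism onto the normal base $S$ being an isomorphism). Thus log \'etaleness reduces to Kummer log \'etaleness, and it suffices to verify the latter in each case, after which \ref{maincor} produces the unique klf extension.

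For $G=\mu_n$ with $n$ arbitrary, the Cartier dual is the constant group $G^D=\bb Z/n\bb Z$, whose underlying scheme $\coprod_{i=1}^n \uS \to \uS$ is finite \'etale. Endowed with the $S$-strict log structure it is a strict \'etale $S$-log scheme, and strict \'etale morphisms are Kummer log \'etale by the very definition of the topology (\ref{definition:topologies_on_LSch}). Hence \ref{maincor} applies and every fppf $\mu_{n,U}$-torsor on $X_U/U$ extends uniquely to a klf $\mu_n$-torsor on $X/S$. This holds for \emph{every} $n$, including $n$ divisible by residue characteristics, precisely because the dual $\bb Z/n\bb Z$ is \'etale irrespective of $n$.

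For the second family I would use that $G^D$ has the same order as $G$, hence an order invertible on $S$. A finite flat commutative group scheme of invertible order is \'etale, since on each geometric fibre its connected part has order a power of the residue characteristic and so is trivial. Thus whenever $G^D$ lies in $(fin/S)_c$---for instance $G=\bb Z/n\bb Z$ with $n\in\ca O_S^\times(S)$, whose dual is $\mu_n$---it is finite \'etale, and the argument of the previous paragraph applies verbatim. The remaining listed example $G=\on{LogPic}[n]$ with $n$ invertible is the only delicate point: its dual is again an $n$-torsion object of $(fin/S)_f$, not representable by a finite $S$-scheme, so Kummer log \'etaleness must be read through a trivializing klf cover. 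That smallest cover is a root stack of index dividing $n$ (cf. \cite[Section 4.2]{Holmes2022Logarithmic-mod} and \cite[Proposition 4.5]{katoJuin}); as $n$ is invertible it has invertible local index, hence is Kummer log \'etale, and over it $G^D$ becomes finite \'etale and strict. Therefore $G^D$ is Kummer log \'etale over $S$ and \ref{maincor} once more yields the unique extension.

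I expect this last case to be the only genuine obstacle: for objects of $(fin/S)_f$ that are not honest finite flat $S$-group schemes, the Kummer log \'etale hypothesis of \ref{maincor} has to be checked on a trivializing cover, and one must invoke the explicit description of that cover for $\on{LogPic}[n]$ to see that its index divides the invertible integer $n$. Everything else is the formal fact that constant groups and invertible-order finite groups are \'etale, hence strict \'etale, hence Kummer log \'etale.
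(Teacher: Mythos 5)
Your overall route is the same as the paper's: the paper's entire proof is the one-line observation that the statement follows from \ref{maincor}, and you likewise reduce everything to \ref{maincor}, adding the case-by-case verification that the Cartier duals in the bulleted examples satisfy its hypothesis. Those verifications are sound: $\bb Z/n\bb Z$ is strict \'etale over $S$, hence Kummer log \'etale, for every $n$; a finite flat commutative group scheme of order invertible on $S$ is \'etale; and for $G=\on{LogPic}[n]$ with $n$ invertible, the trivializing klf cover is a root stack of index dividing $n$, hence Kummer log \'etale, after which the dual becomes strict finite \'etale. This is a useful expansion of what the paper leaves implicit.

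There is, however, a genuine flaw in your opening reduction of ``$G^D$ log \'etale'' to ``$G^D$ Kummer log \'etale''. You assert that a log modification has proper \emph{birational} underlying morphism; this is false. The underlying scheme morphism of a log modification need not be birational: the log blow-up of a geometric log point with characteristic monoid $\bb N^2$ in its maximal ideal has underlying scheme $\bb P^1_k$ over $\Spec k$. (What is true is that log modifications are proper and are isomorphisms over the locus of trivial log structure, which is empty in this example.) Your argument also tacitly assumes that a log \'etale \emph{morphism} decomposes as a composite of the generators of the log \'etale \emph{topology} (strict \'etale maps, log modifications, root stacks), which is not something you can take for granted: being log \'etale is an infinitesimal lifting condition, not membership in a class of covers. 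So as written, the passage from the hypothesis of \ref{ext_torsors} to the hypothesis of \ref{maincor} is not established. The cleanest repair is to bypass the reduction entirely: the proof of \ref{maincor} only uses that $G^D$ is a log smooth object over $S$, via the N\'eron mapping property of $\on{LogJac}_{X/S}$ in the log smooth topology (\cite[Theorem 6.11]{HMOP}), and log \'etale morphisms are in particular log smooth; hence the conclusion of \ref{maincor} holds verbatim under the ``log \'etale'' hypothesis of the proposition, with no Kummer condition needed. With that substitution, your treatment of the examples stands, and in fact for the bulleted cases your direct verification of Kummer log \'etaleness already suffices on its own, independently of the flawed reduction.
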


On the other hand, when $G^D/S$ is not log \'etale, $G$-torsors on curves may not extend uniquely over log regular bases. Still, we can use \ref{mainrslt} to understand their structure. We give some examples below.

\begin{proposition}[Non-extension of $\mathbb{Z}/p\mathbb{Z}$-torsors.]\label{non-ext}
      In \ref{example:nonproper_pic_of_elliptic_degeneration}, suppose the discrete valuation ring $R$ has mixed characteristic $(0,p)$ and contains a primitive $p$-th root of unity and a primitive $n$-th root of unity. We have
    \begin{equation}\label{monkey}
          \on{LogPic}_{X/S}[n]=\mu_n \times \bb Z/n\bb Z.
      \end{equation}
    as strict $S$-group schemes. In particular,
    \begin{itemize}
        \item If $n$ is prime to $p$, then log torsors under $\mathrm{LogPic}_{X/S}[n]$ over $X/S$ are equivalent to fppf torsors under $\mathrm{Pic}_{X_{\eta}/\eta}[n]=\mathrm{Pic}^0_{X_{\eta}/\eta}[n]$ on the generic fiber $X_{\eta}/\eta$.
        \item If $n=p$, then one can construct a family of fppf $\mathbb{Z}/p\mathbb{Z}$-torsors over $X_{\eta}/\eta$ which do not extend to log $\mathbb{Z}/p\mathbb{Z}$-torsors over $X/S$.
        
\end{itemize}
    \end{proposition}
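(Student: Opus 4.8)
The plan is to realise $\on{LogJac}_{X/S}$ as a logarithmic Tate curve, read off its $n$-torsion, and then treat the regimes $(n,p)=1$ and $n=p$ separately. Write $K=\operatorname{Frac}(R)$ and (after passing to the strict henselisation, so that the reduction is a \emph{split} nodal cubic) let $q$ be a Tate parameter for $E:=\Pic^0_{X_\eta/\eta}=X_\eta$, so that $v(q)=v(\Delta)=n$. Using \ref{theorem:representability_and_properties_logjac} together with the uniformisation $0\to\bb G_m\to\bb G_m^{log}\to\bb G_m^{trop}\to 0$, I would first identify $\on{LogJac}_{X/S}$ with $\bb G_m^{log}/q^{\bb Z}$: quotienting by $q^{\bb Z}$, with $q$ mapping to $n\in\bb G_m^{trop}(S)=\bb Z$, reproduces the sequence $0\to\Pic^0_{X/S}\to\on{LogJac}_{X/S}\to\Trojac_{X/S}\to 0$ of \ref{theorem:representability_and_properties_logjac} and the value $\Trojac_{X/S}=\bb Z/n\bb Z$ of \ref{example:global_sections_trojac_of_elliptic_degen_curve_over_dvr}. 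Since the degree map is torsion-free, $\on{LogPic}_{X/S}[n]=\on{LogJac}_{X/S}[n]$, and combining the Kummer sequence $0\to\mu_n\to\bb G_m^{log}\xrightarrow{n}\bb G_m^{log}\to 0$ (exact in the klf topology, where $\bb G_m^{log}$ is $n$-divisible) with $0\to q^{\bb Z}\to\bb G_m^{log}\to\on{LogJac}_{X/S}\to 0$ yields
\[
0\to\mu_n\to\on{LogPic}_{X/S}[n]\to\bb Z/n\bb Z\to 0,
\]
the right-hand generator being the class of an $n$-th root $q^{1/n}$. The splitting into $\mu_n\times\bb Z/n\bb Z$ as \emph{strict} $S$-groups is the delicate point: a priori $q^{1/n}$ is only defined after a root stack (as recorded in the example following the definition of $(fin/S)_f$), but the hypothesis $\zeta_n\in R$ trivialises the $\mu_n$-torsor of $n$-th roots of $q$, so the section descends to $S$. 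Concretely I would normalise $q=\pi^n$ (absorbing the unit $\prod_{k\ge1}(1-q^k)^{-24}\equiv 1$ by Hensel when $(n,p)=1$, and by the choice of Tate curve when $n=p$), so that $q^{1/n}=\pi\in M_S^{gp}(S)$ gives the splitting directly.

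\textbf{The case $(n,p)=1$.} Here $n$ is invertible on $S$, so $\mu_n\cong\bb Z/n\bb Z$ is étale and $G:=\on{LogPic}_{X/S}[n]\cong(\bb Z/n\bb Z)^2$ has log étale Cartier dual; thus \ref{maincor} applies verbatim and log $G$-torsors on $X/S$ restrict bijectively to fppf $G_\eta$-torsors on $X_\eta$. It then remains to identify $G_\eta$: since $M_S$ is trivial on $\eta$, the sheaf $\on{LogJac}_{X/S}$ restricts over $\eta$ to $\Pic^0_{X_\eta/\eta}=E$, whence $G_\eta=E[n]=\Pic^0_{X_\eta}[n]$; moreover $\Pic_{X_\eta}[n]=\Pic^0_{X_\eta}[n]$ because $X_\eta$ is smooth and connected, so $\Pic/\Pic^0\cong\bb Z$ is torsion-free. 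This gives the asserted equivalence.

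\textbf{The case $n=p$.} By \ref{mainrslt} with $(\bb Z/p\bb Z)^D=\mu_p$, log $\bb Z/p\bb Z$-torsors on $X/S$ are classified by $\Hom_S(\mu_p,\on{LogJac}_{X/S})$, while by \ref{theorem:Raynaud} fppf $\bb Z/p\bb Z$-torsors on $X_\eta$ are classified by $\Hom_\eta(\mu_p,E)$; by naturality of these isomorphisms, a torsor extends to a log torsor exactly when it lies in the image of the generic-fibre restriction $\Hom_S(\mu_p,\on{LogJac}_{X/S})\to\Hom_\eta(\mu_p,E)$. I would show this map is not surjective. On the source, every homomorphism factors through $\on{LogPic}_{X/S}[p]=\mu_p\times\bb Z/p\bb Z$, and
\[
\Hom_S(\mu_p,\mu_p\times\bb Z/p\bb Z)=\Hom_S(\mu_p,\mu_p)=\bb Z/p\bb Z,
\]
since $\mu_p$ is connected over $S$ (its special fibre is infinitesimal) while $\bb Z/p\bb Z$ is étale, forcing $\Hom_S(\mu_p,\bb Z/p\bb Z)=0$. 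On the target, $\zeta_p\in K$ makes $\mu_{p,\eta}$ constant, so $\Hom_\eta(\mu_p,E)=E(K)[p]$; with $q=\pi^p$ the two classes $[\zeta_p]$ and $[\pi]$ are independent ($[\pi]^p=[q]=1$ and $v(\pi)\not\equiv 0\bmod p$), whence $E(K)[p]=(\bb Z/p\bb Z)^2$. The restriction is therefore a strict inclusion $\bb Z/p\bb Z\hookrightarrow(\bb Z/p\bb Z)^2$: any homomorphism sending a generator to $[\pi]$ produces an fppf $\bb Z/p\bb Z$-torsor on $X_\eta$ with no log extension, and letting the generator range over the $p^2-p$ classes outside the image gives the desired family.

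\textbf{Main obstacle.} I expect the crux to be the strict splitting in the first step, namely showing that the root-of-unity hypothesis genuinely descends $q^{1/n}$ to $S$ rather than merely to a root stack; the rest of the first step and the two cases are then formal given \ref{mainrslt} and \ref{maincor}. The second load-bearing computation is $\Hom_S(\mu_p,\on{LogJac}_{X/S})=\bb Z/p\bb Z$, which is exactly what creates the failure of surjectivity in the case $n=p$; here the essential input is that mixed characteristic keeps $\mu_p$ connected over $S$, so that the component-group direction $[\pi]\in E(K)[p]$ admits no homomorphism from $\mu_p$ over $S$.
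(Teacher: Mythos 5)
Your treatment of the two bullet points is correct and is essentially the paper's own argument: the prime-to-$p$ case is \ref{ext_torsors}/\ref{maincor} applied to a group killed by an invertible integer, and the case $n=p$ rests on exactly the mechanism the paper uses, namely that $\mu_{p,S}$ is connected in mixed characteristic, so any $S$-homomorphism $\mu_{p,S}\to\mu_p\times\bb Z/p\bb Z$ kills the $\bb Z/p\bb Z$ factor, while $\zeta_p\in K$ produces $\eta$-homomorphisms with nonzero second component. Your identification of the restriction map as an inclusion $\bb Z/p\bb Z\hookrightarrow(\bb Z/p\bb Z)^2$ just makes the paper's one-line proof quantitative.

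The issues are concentrated in your proof of \ref{monkey}, which the paper asserts without proof, so here you are going beyond it. First, the identification $\on{LogJac}_{X/S}\simeq\bb G_m^{log,\dagger}/q^{\bb Z}$ is false as an identification of sheaves on $\cat{LSch}/S$: over $\eta$ the log structure is trivial, so the right-hand side restricts to the sheaf quotient $\bb G_{m,\eta}/q^{\bb Z}$, whereas $\on{LogJac}$ restricts to $E_\eta=\Pic^0_{X_\eta/\eta}$, and these are not isomorphic. Tate's uniformization is rigid-analytic, not algebraic: any homomorphism of $\eta$-group sheaves $\bb G_{m,\eta}\to E_\eta$ is a homomorphism of group schemes from an affine group to an abelian variety, hence zero, so $E_\eta$ admits no presentation as a quotient of $\bb G_m$. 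Matching the sub and quotient of \ref{theorem:representability_and_properties_logjac} does not determine the extension, and the extension class is precisely what \ref{monkey} is about. Second, the sentence claiming that $\zeta_n\in R$ trivialises the $\mu_n$-torsor of $n$-th roots of $q$ is wrong: roots of unity never trivialize a Kummer torsor; that torsor is trivial if and only if $q\in(K^\times)^n$. What is correct, and what you rightly single out as the crux, is your concrete normalization: the legitimate route (suggested by the paper's example following the definition of $(fin/S)_f$, and by the cited description of $\on{LogPic}[n]$ as strict after a root stack) is that $\on{LogPic}_{X/S}[n]$ is the Kummer-type extension of $\bb Z/n\bb Z$ by $\mu_n$ with class $\delta_n(q)\in H^1_{klf}(S,\mu_n)$, where $q$ is the Tate/smoothing parameter; this class vanishes exactly when $q$ lies in the image of multiplication by $n$ on $\bb G_m^{log}(S)=K^\times$, which is what $q=\pi^n$ arranges (by Hensel when $(n,p)=1$, and by choosing the curve when $p\mid n$). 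This choice is genuinely load-bearing: if one takes a Tate parameter $q=\pi^p u$ with $u\notin(K^\times)^p$, then $E(K)[p]=\mu_p(K)$, the splitting \ref{monkey} fails, and in fact every relative fppf $\bb Z/p\bb Z$-torsor on $X_\eta$ extends (via the schematic closure $\mu_p\subset\Pic^0_{X/S}$), so the second bullet fails as well. Once you replace the false uniformization step by the computation of the Kummer class of $q$, the rest of your argument goes through.
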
    
        
     \begin{proof} The first statement follows from \ref{ext_torsors}.
         We prove the second statement. Any morphism $\mu_{p,\eta} \simeq \bb Z/p\bb Z \to \on{LogPic}_{X_\eta/\eta}[p]$ whose projection onto the second factor $\bb Z/p\bb Z$ of \ref{monkey} is nonzero has no chance of extending to a global map $\mu_{p,S} \to \on{LogPic}_{X/S}[p]$ since $\mu_{p,S}$ is connected. Hence, the corresponding $\bb Z/p\bb Z$-torsors on $X_\eta/\eta$ do not extend to $X/S$.

     \end{proof}

\begin{proposition} \label{LogJac[n]} (The universal $\on{LogPic}[n]$-torsor)
$\on{LogPic}[n]$-torsors on a log curve can be naturally identified with endomorphisms of $\on{LogPic}[n]$. In particular, the inclusion $\mathrm{LogPic}_{X/S}[n] \to \mathrm{LogPic}_{X/S}$ gives rise to a \emph{universal $\mathrm{LogPic}_{X/S}[n]$-torsor} $\ca T$. Furthermore, for any log finite flat $S$-group scheme $G$ killed by $n$, any $G$-torsor $T$ on $X$ corresponds to a map $G^D \to \mathrm{LogPic}_{X/S}[n]$, whose Cartier dual is a map
\begin{equation}\label{rabbit}
    \mathrm{LogPic}_{X/S}[n] \to G.
\end{equation}
from which one obtains $T$ from $\ca T$ by change of group structure.
\end{proposition}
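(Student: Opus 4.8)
The plan is to deduce everything from the main classification \ref{mainrslt} together with the autoduality of the log Jacobian. Write $H:=\on{LogPic}_{X/S}[n]$. Since the degree map $\on{LogPic}_{X/S}\to\bb Z$ has torsion-free target, the $n$-torsion of $\on{LogPic}_{X/S}$ agrees with that of its degree $0$ part, so $H=\on{LogJac}_{X/S}[n]$; as recalled above, this is an object of $(fin/S)_f$. Applying \ref{mainrslt} to $G=H$ identifies the group of relative $H$-torsors on $X/S$ with $\Hom(H^D,\on{LogJac}_{X/S})$, and the first assertion will follow once this $\Hom$-group is identified with $\on{End}(H)$.

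For that identification I would use two facts. First, since $H^D$ is killed by $n$, every map $H^D\to\on{LogJac}_{X/S}$ factors through the $n$-torsion $\on{LogJac}_{X/S}[n]=H$, so $\Hom(H^D,\on{LogJac}_{X/S})=\Hom(H^D,H)$. Second, I would invoke the autoduality of the log Jacobian (a principally polarized log abelian variety, as recalled above) to obtain a canonical Cartier self-duality $H^D\cong H$, the log analogue of the self-duality of the $n$-torsion of a principally polarized abelian variety coming from the alternating Weil pairing $\on{LogJac}_{X/S}[n]\times\on{LogJac}_{X/S}[n]\to\mu_n$. Combining the two gives $\Hom(H^D,\on{LogJac}_{X/S})=\on{End}(H)$. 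Tracing through the identifications, the inclusion $\iota\colon H=\on{LogJac}_{X/S}[n]\hra\on{LogJac}_{X/S}$ corresponds to $\on{id}_H$; this is precisely the element that defines the universal torsor $\ca T$, realizing the inclusion $\on{LogPic}_{X/S}[n]\to\on{LogPic}_{X/S}$ of the statement.

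For the last assertion, let $G$ be killed by $n$ and $T$ a $G$-torsor on $X$. By \ref{mainrslt} its class is a map $\Phi_T\colon G^D\to\on{LogJac}_{X/S}$; since $G^D$ is killed by $n$, it factors as $G^D\xrightarrow{\alpha}H\xrightarrow{\iota}\on{LogJac}_{X/S}$, producing $\alpha\colon G^D\to\on{LogPic}_{X/S}[n]$. Read through the self-duality $H^D\cong H$, its Cartier dual is exactly the map \ref{rabbit}, namely $\psi:=\alpha^D\colon\on{LogPic}_{X/S}[n]\to G$. To recover $T$ from $\ca T$ by change of structure group along $\psi$, I would use the naturality of \ref{mainrslt} in the coefficient group: pushing a torsor forward along a homomorphism $\phi$ of coefficient groups corresponds, on classifying maps, to precomposition with the Cartier dual $\phi^D$. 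This is immediate from the explicit description $f\mapsto f_*T$ given after \ref{Rynaud-fppf}, since $(g\circ\phi)_*T=g_*(\phi_*T)$. Applying this to $\ca T$ and $\psi$ gives $\Phi_{\psi_*\ca T}=\Phi_{\ca T}\circ\psi^D=\iota\circ\alpha=\Phi_T$, using $\Phi_{\ca T}=\iota$ and biduality $\psi^D=\alpha$. As \ref{mainrslt} is bijective on torsor classes, $\psi_*\ca T\cong T$, as desired.

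The main obstacle is the input of Cartier self-duality $H^D\cong H$: it is what turns $\Hom(H^D,\on{LogJac}_{X/S})$ into an endomorphism group and gives meaning to the biduality step $\psi^D=\alpha$. I would establish it by quoting the autoduality of the log Jacobian and the resulting alternating Weil pairing on its $n$-torsion; the symmetry of the polarization ensures the identification $H\cong H^D$ is compatible with biduality, so no spurious signs obstruct $\psi^D=\alpha$. The remaining steps — the factorization through $n$-torsion, the naturality of \ref{mainrslt} in the coefficient group, and biduality for objects of $(fin/S)_f$ — are formal.
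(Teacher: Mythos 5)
Your proposal is correct and takes essentially the same approach as the paper's own proof: both rest on \ref{mainrslt}, the factorization of maps from groups killed by $n$ through $\on{LogJac}_{X/S}[n]$, the self-Cartier duality of $\on{LogPic}_{X/S}[n]$ furnished by the log Weil pairing (the paper cites the log Deligne pairing of Molcho--Ulirsch--Wise for this), and recovery of $T$ from $\ca T$ by change of structure group along the dualized classifying map, with the universal torsor corresponding to the inclusion $\on{LogPic}_{X/S}[n] \hra \on{LogPic}_{X/S}$. One cosmetic caveat: the pairing a principal polarization induces on $n$-torsion is alternating rather than symmetric, so depending on biduality conventions a sign can enter your step $\psi^D=\alpha$; this at worst replaces $\psi$ by $-\psi$, and the paper's proof elides the same point.
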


\begin{proof}
    
For any abelian variety $A/S$, the Cartier dual of the $n$-torsion $A[n]$ is the $n$-torsion of the dual abelian variety $A^\vee/S$. Jacobians are canonically polarized by their theta divisors, so they are self-dual as abelian varieties. In particular, the $n$-torsion of a Jacobian is canonically its own Cartier dual. By \cite{MolchoUlirschWiseLogDelignePairing}, this self-duality extends to a natural self-Cartier duality of $\on{LogPic}[n]$. Therefore, by \ref{mainrslt}, $\on{LogPic}[n]$-torsors on a log curve are naturally identified with endomorphisms of $\on{LogPic}[n]$.\footnote{In general, if $S$ is a log regular base and $U$ the open of triviality of $M_S$, such endomorphisms over $U$ may not extend to endomorphisms over $S$ (so the corresponding torsors do not extend either). This can be seen, for example, for $n=p$ over the curve of \ref{non-ext} since the $p$-torsion of its log Picard group contains a copy of $\mu_{p,S}$.}

On the other hand, there is always a \emph{universal $\mathrm{LogPic}_{X/S}[n]$-torsor} $\ca T$, given by the inclusion $\mathrm{LogPic}_{X/S}[n] \to \mathrm{LogPic}_{X/S}$. The universal property of $\ca T$ is the following. For any log finite flat $S$-group scheme $G$ killed by $n$, the Cartier dual $G^D$ is also killed by $n$. Hence, any $G$-torsor $T$ on $X$ corresponds to a map $G^D \to \mathrm{LogPic}_{X/S}[n]$, whose Cartier dual is a map $\mathrm{LogPic}_{X/S}[n] \to G$.
One now obtains $T$ from $\ca T$ by change of structure group using \ref{rabbit}.
\end{proof}

\begin{example}\label{example:univ_torsor_on_nodal_cubic}[Universal torsors on nodal cubics]
    Let $X/S$, $n$ and $X'$ be as in \ref{example:nonproper_pic_of_elliptic_degeneration}. Give $S$ the divisorial log structure and $X,X'$ their structures of log curves over $S$, so that $f \colon X' \to X$ is a log blowup. We claim that the universal $\on{LogPic}_{X/S}[n]$-torsor on $X/S$ is a klf cover $\ca T \colon X' \to X$ ($\ca T$ is not equal to $f$). We will now construct this klf cover $\ca T$, and only sketch the proof that it is the universal torsor. Let $0_{X/S} \colon S \to X$ be a smooth section through $D_0$. We may see the generic fibre $X_\eta/\eta=X'_\eta/\eta$ as an elliptic curve by declaring the origin to be (the restriction of) $0_{X/S}$. The abelian varieties $X_\eta/\eta$ and $\on{Pic}^0_{X_\eta/\eta}$ are dual, and canonically isomorphic via the Abel-Jacobi isomorphism
\begin{align*}
   X_\eta & \iso \on{Pic}^0_{X_\eta/\eta} \\
   x & \mapsto \ca O([x-0_{X/S}]).
\end{align*}
There is an analogous ``log Abel-Jacobi map based at $0_{X/S}$" $X \to \on{LogPic}^0_{X/S}$, such that for any map $T \to S$ and any log blowup $Y \to X_T$ which remains a log curve over $S$, the square
\[
\begin{tikzcd}
    (Y/T)^{smooth} \arrow[r] \arrow[d] & \on{Pic}_{Y/T}^{\deg=0} \arrow[d] \\
    X \arrow[r] & \on{LogPic}^0_{X/S}
\end{tikzcd}
\]
commutes. Here, the rightmost vertical arrow is the composite
\[
\on{Pic}_{Y/T}^{\deg=0} \to \on{LogPic}^0_{Y/T} = \on{LogPic}^0_{X_T/T} \to \on{LogPic}^0_{X/S}
\]
and the top horizontal arrow is the classical Abel-Jacobi embedding $y \mapsto \ca O([y-0_{X/S}])$. Since any $T$-point of $X$ factors through $(Y/T)^{smooth}$ for some $Y$, this property uniquely characterizes the log Abel-Jacobi map.

The multiplication by $n$ endomorphism of $\on{LogPic}^0_{X/S}$ fits into a cartesian square
\begin{equation}\label{commdiag}
\begin{tikzcd}
    X' \arrow[r,"\times n"] \ar[dr, phantom, "\square"]\arrow[d] & X \arrow[d] \\
    \on{LogPic}^0_{X/S} \arrow[r,"\times n"] & \on{LogPic}^0_{X/S}.
\end{tikzcd}
\end{equation}
We emphasize that the map $\times n \colon X' \to X$ in \ref{commdiag} is \emph{not} the log blowup by which we defined $X'$: it is not a log blowup at all, but a klf cover. Explicitly, recall that the local equation for $X$ at the node of the special fibre $X_s$ is of the form $uv=\pi^n$ where $u,v$ are parameters for the branches of $X_s$ and $\pi$ is a uniformizer for $S$. The map $\times_n$ factors as
\[
X' \to X^{n-root} \to X,
\]
where $X' \to X^{n-root}$ is a strict fppf $\on{LogPic}_{X/S}[n]$-torsor and $X^{n-root} \to X$ is the root stack of index $n$ obtained from $X$ by adjoining $n$-th roots $\sqrt[n]{u},\sqrt[n]{v}$ to $u,v$, imposing the relation $\sqrt[n]{u}\sqrt[n]{v}=\pi$ and modding out the action of $\mu_n$ with weights $(1,-1)$ on $(\sqrt[n]{u},\sqrt[n]{v})$.

In particular, the map $\times n \colon X' \to X$ is a $\on{LogPic}_{X/S}[n]$-torsor in the klf topology, which we denote by $\ca T$. We claim that $\ca T$ is the universal torsor of \ref{LogJac[n]}. \\

We will only give a rough sketch of the proof of this claim. First, we must discuss Weil pairings and their degenerations. The classical Weil pairing
\[
X_\eta[n] \times X_\eta[n] \to \mu_n \hra \bb G_m
\]
over $\eta$ extends uniquely to a non-degenerate log Weil pairing over $S$
\[
\on{Weil} \colon \on{LogPic}_{X/S}[n] \times \on{LogPic}_{X/S}[n] \to \mu_n \hra \bb G_m^{log}.
\]
Explicitly, pick a generator $\gamma$ for the first homology of the dual graph $\Gamma$ of $X_s/s$ (which is topologically a circle). This induces isomorphisms
\begin{align*}
    \on{Pic}^0_{X_s/s} = \on{Hom}(H_1(\Gamma),\bb G_{m,s}) & \iso \bb G_{m,s} \\
    f & \mapsto f(\gamma) \\
    \on{Pic}_{X_s/s}[n] = \on{Hom}(H_1(\Gamma),\mu_{n,s}) & \iso \mu_{n,s} \\
    f & \mapsto f(\gamma) \\
    \on{TroPic}_{X_s/s}[n] = \on{H_1(\Gamma)}/nH_1(\Gamma) & \iso \bb Z/n\bb Z \\
    \gamma & \mapsto 1
\end{align*}
and over $s$, the Weil pairing is the map
\begin{align}\label{formula_weil_pairing}
    \on{LogPic}_{X_s/s}[n] \times \on{LogPic}_{X_s/s}[n] & \to \on{Pic}_{X_s/s}[n] \iso \mu_n \\
    (a,b) & \mapsto a^{\on{Trop}(b)}b^{-\on{Trop}(a)},
\end{align}
where $\on{Trop}$ is the composite $\on{LogPic}_{X/S}[n] \to \on{TroPic}_{X/S}[n] \iso \bb Z/n\bb Z$.\footnote{Choosing the other generator $-\gamma$ for $H_1(\Gamma)$ would multiply the isomorphisms $\on{TroPic}_{X/S}[n] \iso \bb Z/n\bb Z$ and $\mu_n \iso \on{Pic}_{X_s/s}[n]$ by $-1$, resulting in the same log Weil pairing.}

The log Weil pairing can be geometrically interpreted as follows. The Poincar\'e bundle on $X_\eta \times_\eta \on{Pic}^0_{X_\eta/\eta}$ corresponds via Abel-Jacobi to a $B\bb G_m$-valued bilinear pairing on $\on{Pic}^0_{X_\eta/\eta}$. This extends uniquely to an analogous bilinear pairing of stacks over $S$\footnote{Unless specified otherwise, all our classifying stacks are taken in the klf topology. For the sheaf $\bb G_m^{log}$, this coincides with the fppf classifying stack.}
\[
\ca P \colon \on{LogPic}_{X/S}\times_S \on{LogPic}_{X/S} \to B\bb G_m^{log}.
\]
By linearity, for any $T \to S$ and any two torsion sections $a,b \colon T \to \on{LogPic}_{X/S}[n]$, the torsor $\ca P(a,b)^{\otimes n}$ admits two natural trivializations
\begin{align*}
	\on{Triv_2} & \colon \ca P(a,b)^{\otimes n} \iso \ca P(a,nb) \iso \ca P(a,0) \iso \bb G_m^{log}(T) \\
	\on{Triv_1} & \colon \ca P(a,b)^{\otimes n} \iso \ca P(na,b) \iso \ca P(0,b) \iso \bb G_m^{log}(T).
\end{align*}
The Weil pairing $\on{Weil}(a,b)$ is the difference $\on{Triv_2}\on{Triv_1}^{-1}$, which is in $\mu_n$ given that $n\on{Triv}_1=n\on{Triv}_2$ is the canonical trivialization of $\ca P(na,nb)$. For any bilinear pairing of group stacks $\Phi \colon G \times G \to H$, we denote by $\frac{\partial}{\partial p_2}\Phi$ the induced morphism $G \times BG \to BH$ (obtained by change of structure group along the maps $\Phi(a,-)$ for sections $a$ of $G$). In particular, the Weil pairing yields a bilinear map
\[
\frac{\partial}{\partial p_2}\on{Weil} \colon \on{LogPic}_{X/S}[n] \times_S B\on{LogPic}_{X/S}[n] \to B\mu_n \to B\bb G_m^{log}
\]
of klf stacks over $S$.

We turn to the proof of our earlier claim that $\ca T:=\times n \colon X' \to X$ is the universal $\on{LogPic}[n]$-torsor on $X/S$. The Weil pairing identifies $\on{LogPic}_{X/S}[n]$ with its own Cartier dual. After unwrapping definitions, the claim is equivalent to proving that the map of klf group stacks over $X$
\begin{equation}\label{toprove}
    \frac{\partial}{\partial p_2}\on{Weil}(-,\ca T) \colon \on{LogPic}_{X/S}[n] \to B\bb G_{m,X}^{log}
\end{equation}
is the natural inclusion
\begin{equation}\label{opwajgp}
    \on{LogPic}_{X/S}[n]=B\mu_n \hra B\bb G_{m,X}^{log}.
\end{equation}
We emphasize that the leftmost equality in \ref{opwajgp} holds because we are considering the classifying stack $B\mu_n$ in the klf topology. It would not hold for the fppf classifying stack. 
Now, since $\ca T$ is the universal $n$-th root of the Abel-Jacobi map $\on{AbJac} \colon X \to \on{LogPic}^0_{X/S}$, for any section $a$ in $\on{LogPic}_{X/S}[n](T)$, the definition of the Weil pairing via trivializations of $\ca P^{\otimes n}$ yields an equivalence between sections of $\frac{\partial}{\partial p_2}\on{Weil}(a,\ca T)$ and of $\ca P(a,\on{AbJac})$, which is $a$ itself. This shows that \ref{toprove} is the identity as claimed.

The fact that we started with an elliptic degeneration whose discriminant $\pi^n$ already had a $n$-th root is not essential: by an almost identical proof, the universal klf $\on{LogPic}[n]$-torsor on the universal log curve over $\o{\ca M}_{1,1}$ is obtained by pulling back the multiplication by $n$ map on the universal log Jacobian along the log Abel-Jacobi map.


\end{example}

\begin{remark}\label{Group stack}
   (Extension of $\mathbb{Z}/p\mathbb{Z}$-torsors.) Let $R$ be a discrete valuation ring of mixed characteristic $(0,p)$, with fraction field $K$. Suppose $R$ does not have a nontrivial $p$-th root of unity. Let $R'$ be the ring obtained from $R$ by adjoining such a root. Over $K':=\on{Frac}(R')$, we have a (non-canonical) isomorphism $\mu_p\simeq \mathbb{Z}/ p\mathbb{Z}$. Let $G:=\mathrm{Aut}_R(R')=\mu_p$. The \'etale $K'$-group $\mu_{p,K'} \simeq \mathbb{Z}/p\mathbb{Z}$ has a Néron model over $R'$ given by $N':=\mathbb{Z}/p\mathbb{Z}_{R'}$. By consequence, the group stack $N:=[N'/G]$ is the N\'eron model of $\mu_{p,K}$ over $R$. If $X/R$ is a log curve, we have seen in \ref{non-ext} that some $\mathbb{Z}/p\mathbb{Z}$-torsors over $X_U$ do not extend to log torsors over $X$ under $\bb Z/p\bb Z$. However, we claim that they extend to log torsors \emph{under the dual of $N$}. We keep this statement informal and unproved for now, as it requires some machinery on duality for group stacks. We refer to \cite{SBr} as a starting point for these notions. We intend to explore them further in future work.
\end{remark}

\subsubsection{$\alpha_p$-torsors on nodal curves.}\label{alpha_p-torsors}

\begin{definition}
    The finite, flat $\bb F_p$-group scheme $\alpha_p:=\Spec \bb F_p[T]/T^p$ is the subgroup of $p$-nilpotents in $\bb G_a$. We also use the name $\alpha_p$ for the restriction of $\alpha_p$ to a given scheme $S$ on which $p=0$ when we can unambiguously do so. When $S$ is a log scheme, we equip $\alpha_p$ with the $S$-strict log structure.
\end{definition}

\begin{lemma}\label{lemma:morphisms_alpha_p_to_Z/pZ_are_trivial}
    For any $\bb F_p$-scheme $S$, we have
    \[
    \on{Hom}(\alpha_p\times_{\bb F_p} S,\bb Z/p\bb Z)=\{0\}.
    \]
\end{lemma}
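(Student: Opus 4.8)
The plan is to prove the statement for an arbitrary $\bb F_p$-scheme $S$, which (since $\alpha_p$ and $\bb Z/p\bb Z$ are finite flat, hence represent their fppf sheaves, so that $\Hom$ of group sheaves agrees with $\Hom$ of group schemes by Yoneda) amounts to showing that the only homomorphism of $S$-group schemes $f \colon \alpha_{p,S} \to (\bb Z/p\bb Z)_S$ is the trivial one, where I write $\alpha_{p,S}:=\alpha_p\times_{\bb F_p}S$. The guiding idea is that $\alpha_p$ is \emph{infinitesimal} (a nilpotent thickening of a point) whereas $\bb Z/p\bb Z$ is \'etale, so a homomorphism between them should be forced to be constant; the only care needed is that $S$ may be non-reduced or disconnected, so I would run the argument purely topologically rather than assuming $S$ is nice.

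Concretely, the first step is to record that the zero section $e \colon S \to \alpha_{p,S}$ is a homeomorphism on underlying topological spaces: locally $\alpha_{p,S}=\Spec \cO_S[T]/(T^p)$ and $e$ is the closed immersion cut out by the nilpotent ideal $(T)$, so it induces a homeomorphism. The second step uses that $f$ is a group homomorphism, hence $f\circ e$ is the zero section of $(\bb Z/p\bb Z)_S$. Writing $(\bb Z/p\bb Z)_S=\coprod_{i\in \bb Z/p\bb Z} S_i$ as a disjoint union of copies of $S$, the image of $f\circ e$ is exactly the clopen component $S_0$ indexed by $0$. Since $e$ is surjective on points, the image of the underlying continuous map $|f|$ equals the image of $|f\circ e|$, namely $|S_0|$.

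The final step is to deduce that $f$ factors through the open immersion $S_0 \hookrightarrow (\bb Z/p\bb Z)_S$ (a morphism factors through an open subscheme as soon as its underlying continuous map does), and that, being an $S$-morphism landing in the copy $S_0\cong S$, this factorization is forced to be the structure map $\alpha_{p,S}\to S$ followed by the zero section; hence $f$ is the trivial homomorphism. The only genuinely delicate point is the passage from a topological statement to a scheme-theoretic one over an arbitrary, possibly non-reduced and disconnected, base $S$, and this is precisely what the homeomorphism $e$ of the first step is designed to handle cleanly; everything else is formal.
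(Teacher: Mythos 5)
Your proof is correct and takes essentially the same approach as the paper: both arguments rest on the fact that $\alpha_p$ is topologically trivial (an infinitesimal thickening, hence connected) while $\bb Z/p\bb Z$ is constant and discrete, forcing any homomorphism to land in the identity component. The paper packages this by reducing to geometric connectedness of $\alpha_p$ over $\bb F_p$, whereas you use the equivalent observation that the zero section of $\alpha_{p,S}$ is a nilpotent closed immersion and hence a homeomorphism; the difference is only one of presentation.
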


\begin{proof}
    Since $\bb Z/p\bb Z$ is a constant and discrete group sheaf, it suffices to show that $\alpha_p$ is geometrically connected over $\bb F_p$. This is true since $\alpha_p\times_{\bb F_p} \o{\bb F}_p$ is the spectrum of $\o{\bb F}_p[T]/T^p$, whose only prime ideal is $(T)$.
\end{proof}

\begin{lemma}\label{lemma:hom_ap_Gm_=_ap}
    The Cartier dual of $\alpha_p/\bb F_p$ is canonically isomorphic to $\alpha_p$.
\end{lemma}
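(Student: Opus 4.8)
The plan is to produce the self-duality from an explicit, canonical pairing and then check it is perfect by a functor-of-points computation. Throughout I work over $\bb F_p$ and describe group schemes through their functors of points: for an $\bb F_p$-algebra $R$, a section of $\alpha_p$ is an element $x\in R$ with $x^p=0$. I would define the pairing
\[
\langle -,-\rangle\colon \alpha_p\times_{\bb F_p}\alpha_p \to \bb G_m,\qquad \langle x,y\rangle:=\sum_{n=0}^{p-1}\frac{(xy)^n}{n!}=\exp(xy),
\]
and first check it is well defined: the factorials $n!$ for $n\le p-1$ are invertible in $\bb F_p$, and $(xy)^p=0$, so $\langle x,y\rangle=1+\nu$ with $\nu$ nilpotent and is therefore a unit, i.e. a section of $\bb G_m$.

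Next I would prove bilinearity, i.e. that $\langle -,y\rangle$ and $\langle x,-\rangle$ are homomorphisms. For fixed $y$ this amounts to the identity $\exp((x_1+x_2)y)=\exp(x_1y)\exp(x_2y)$. Expanding both sides, the only subtlety is that the truncation is harmless: the monomials $x_1^ix_2^jy^{i+j}$ occurring on either side vanish once $i+j\ge p$ because $y^p=0$, and on the remaining range $i+j\le p-1$ the identity $\frac{1}{(i+j)!}\binom{i+j}{i}=\frac{1}{i!\,j!}$ makes the two expansions agree; here one uses that $(x_1+x_2)^p=x_1^p+x_2^p=0$ in characteristic $p$, so $x_1+x_2$ is again a section of $\alpha_p$. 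This produces a homomorphism $\varphi\colon \alpha_p\to \alpha_p^D=\sheafhom(\alpha_p,\bb G_m)$ sending $y$ to $\langle -,y\rangle$, and $\varphi$ is manifestly independent of any choice, hence canonical.

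The remaining, and main, step is to show $\varphi$ is an isomorphism; I would do this by identifying $\alpha_p^D(R)$ directly. A section of $\alpha_p^D$ over $R$ is a homomorphism $\alpha_{p,R}\to\bb G_{m,R}$, equivalently a group-like element $u=\sum_{n=0}^{p-1}a_nT^n$ of the Hopf algebra $R[T]/(T^p)$ (with $T$ primitive). The group-like condition $\Delta(u)=u\otimes u$ together with $a_0=1$ reads $a_ia_j=\binom{i+j}{i}a_{i+j}$ for all $i+j\le p-1$. Taking $i=1$ gives the divided-power recursion $a_{n}=a_1^{n}/n!$ (using invertibility of $n!$ for $n\le p-1$), while comparing the coefficient of $T\otimes T^{p-1}$ — which vanishes on the left because $u$ has no $T^p$ term — forces $a_1^p/(p-1)!=0$, i.e. $a_1^p=0$. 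Thus $u=\exp(a_1T)$ with $a_1\in\alpha_p(R)$, and $u\mapsto a_1$ is a functorial bijection $\alpha_p^D(R)\iso\alpha_p(R)$ inverse to $\varphi$. Since $\varphi$ induces a bijection on $R$-points for every $R$, it is an isomorphism of group schemes.

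The main obstacle is this last step: one must pin down $\alpha_p^D$ precisely rather than merely exhibit a homomorphism, and the crux is the interplay between the factorials $n!$ being invertible for $n<p$ (which yields the divided-power formula $a_n=a_1^n/n!$) and the vanishing $p!\equiv 0$, manifested here through $T^p=0$, which forces the truncation $a_1^p=0$. Equivalently, one could run this as a dual-Hopf-algebra computation, showing that in the basis $\gamma_n$ dual to $T^n$ one has $\gamma_i\gamma_j=\binom{i+j}{i}\gamma_{i+j}$, so that the dual algebra is $\bb F_p[\gamma_1]/(\gamma_1^p)$ with $\gamma_1$ primitive; the pairing above simply makes the resulting isomorphism canonical.
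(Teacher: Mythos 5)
Your proof is correct and takes essentially the same route as the paper: both construct the canonical map via the truncated exponential $x \mapsto \left(y \mapsto \sum_{n=0}^{p-1}(xy)^n/n!\right)$ and invert it by extracting the coefficient of the linear term. The paper only sketches this (asserting the inverse is $f \mapsto \frac{\partial f}{\partial y}(0)$), whereas you fully justify surjectivity by the Hopf-algebra computation identifying $\alpha_p^D(R)$ with group-like elements of $R[T]/(T^p)$ — a worthwhile filling-in of the same argument, not a different one.
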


\begin{proof}
    This is well-known, so we only sketch the proof. For any ring $R$ over $\bb F_p$, the $R$-points of $\alpha_p$ are the additive group of $p$-nilpotent elements of $R$ and the $R$-points of $\alpha_p^D$ are the invertibles (for multiplication) of $R[y]/y^p$. We may define a group homomorphism $\alpha_p(R) \to \alpha_p^D(R)$ by way of the truncated exponential series
    \begin{align*}
        \Phi_R \colon \alpha_p(R) & \to \alpha_p^D(R)\\
        x & \mapsto \left(y \mapsto \sum\limits_{i=0}^{p-1} \frac{(xy)^i}{i!} \right).
    \end{align*}
    The formation of $\Phi_R$ is functorial in $R$, and a functorial inverse is given by 
    \begin{align*}
        \on{Hom}(\alpha_p(R),R^\times) & \to \alpha_p(R) \\
        (y \mapsto f(y)) & \mapsto \diff{f}{y}(0).
    \end{align*}
    Thus, the $\Phi_R$'s define an isomorphism of group schemes $\alpha_p \to \alpha_p^D$.
\end{proof}

In view of \ref{theorem:Raynaud}, \ref{mainrslt} and \ref{lemma:hom_ap_Gm_=_ap}, understanding relative (fppf or klf) torsors on a log curve $X/S$ boils down to understanding morphisms from $\alpha_p$ to the (classical or logarithmic) Jacobian of $X/S$. The case of smooth curves is already difficult to describe exhaustively. We focus on the difference between smooth curves and nodal curves, as well as between fppf and klf torsors.

\begin{proposition}\label{alpha_p}
    Let $X/S$ be a log curve such that $p=0$ in $\ca O_S$. Then, the natural map
    \[
    H^1_{fppf}(X/S,\alpha_p) \to H^1_{klf}(X/S,\alpha_p)
    \]
    is an isomorphism.
\end{proposition}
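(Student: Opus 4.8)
The plan is to compare both groups through the Cartier-duality descriptions of relative torsors and reduce the whole statement to the vanishing of $\sheafhom(\alpha_p,\on{TroJac}_{X/S})$. Write $\pi \colon X \to S$ for the structure map. Using the self-duality $\alpha_p^D \simeq \alpha_p$ of \ref{lemma:hom_ap_Gm_=_ap}, \ref{theorem:Raynaud} identifies the sheaf $R^1_{fppf}\pi_*\alpha_p$ of relative fppf $\alpha_p$-torsors with $\sheafhom(\alpha_p,\on{Pic}_{X/S})$, while \ref{raynaudlog} together with the proof of \ref{mainrslt} identifies $R^1_{klf}\pi_*\alpha_p$ with $\sheafhom(\alpha_p,\on{LogJac}_{X/S})$ (recall that in the proof of \ref{mainrslt} the inclusion $\on{LogJac}_{X/S}\hookrightarrow R^1_{klf}\pi_*\bb G^{log}_{m}$ induces an isomorphism on $\sheafhom(\alpha_p,-)$, because $\alpha_p$ is torsion and the cokernel is torsion-free). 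Since $\alpha_p$ is connected, any homomorphism to $\on{Pic}_{X/S}$ factors through the fiberwise identity component, so $\sheafhom(\alpha_p,\on{Pic}_{X/S})=\sheafhom(\alpha_p,\on{Pic}^0_{X/S})$. The natural comparison map is the one induced on relative torsors by the change of topology, and under these identifications it is the map
\[
\sheafhom(\alpha_p,\on{Pic}^0_{X/S}) \to \sheafhom(\alpha_p,\on{LogJac}_{X/S})
\]
induced by the inclusion $\on{Pic}^0_{X/S}\hookrightarrow \on{LogJac}_{X/S}$ of \ref{theorem:representability_and_properties_logjac}.

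Granting this, the proposition reduces to showing that the displayed map is an isomorphism. Applying the left-exact functor $\sheafhom(\alpha_p,-)$ to the exact sequence \ref{ExactLogPic} yields
\[
0 \to \sheafhom(\alpha_p,\on{Pic}^0_{X/S}) \to \sheafhom(\alpha_p,\on{LogJac}_{X/S}) \to \sheafhom(\alpha_p,\on{TroJac}_{X/S}),
\]
so it suffices to prove that $\sheafhom(\alpha_p,\on{TroJac}_{X/S})=0$: injectivity is then automatic, and the cokernel embeds into the vanishing last term.

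To compute $\sheafhom(\alpha_p,\on{TroJac}_{X/S})$, note that since $\alpha_p$ is killed by $p$, any homomorphism from it factors through the $p$-torsion, so $\sheafhom(\alpha_p,\on{TroJac}_{X/S})=\sheafhom(\alpha_p,\on{TroJac}_{X/S}[p])$. By \ref{definition:tropical_jacobian_bounded_monodromy}, $\on{TroJac}_{X/S}$ is the cokernel of the intersection-pairing inclusion $\ca H_{1,X/S}\hookrightarrow \sheafhom(\ca H_{1,X/S},\bb G_m^{trop})^\dagger$, whose target is torsion-free because $\bb G_m^{trop}=\o M^{gp}$ takes values in groupifications of sharp fs monoids, which are free abelian groups. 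Hence the snake lemma applied to multiplication by $p$ gives an injection $\on{TroJac}_{X/S}[p]\hookrightarrow \ca H_{1,X/S}/p\,\ca H_{1,X/S}$. The sheaf $\ca H_{1,X/S}$ is étale-locally a constant sheaf of finitely generated free abelian groups (\ref{proposition:tropicalizing_log_curves}), so $\ca H_{1,X/S}/p\,\ca H_{1,X/S}$ is étale-locally constant with finite stalks, i.e. represented by an étale group space. As in \ref{lemma:morphisms_alpha_p_to_Z/pZ_are_trivial}, there are no nonzero homomorphisms from the infinitesimal, connected group $\alpha_p$ to an étale group, so
\[
\sheafhom(\alpha_p,\on{TroJac}_{X/S}) = \sheafhom(\alpha_p,\on{TroJac}_{X/S}[p]) \hookrightarrow \sheafhom(\alpha_p,\ca H_{1,X/S}/p\,\ca H_{1,X/S}) = 0.
\]

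I expect the main obstacle to be the first paragraph: verifying that the change-of-topology comparison map really corresponds, under the Raynaud-type identifications of \ref{theorem:Raynaud}, \ref{raynaudlog} and \ref{mainrslt}, to the map induced by $\on{Pic}^0_{X/S}\hookrightarrow\on{LogJac}_{X/S}$. This is a naturality statement for the Grothendieck spectral sequence arguments of \ref{raynaudlog} and \ref{pointedlog} with respect to the morphism of sites $X_{klf}\to X_{fppf}$ and the map $\bb G_{m,X}\to\bb G_{m,X}^{log}$; one has to check that the connecting maps and the Cartier-duality pairing are compatible with both. The tropical vanishing in the last paragraph is, by contrast, essentially formal, resting only on the torsion-freeness of $\bb G_m^{trop}$ and on $\alpha_p$ being connected.
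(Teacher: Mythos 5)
Your proof is correct and follows essentially the same route as the paper's: after identifying relative fppf and klf $\alpha_p$-torsors with $\sheafhom(\alpha_p,\on{Pic}_{X/S})$ and $\sheafhom(\alpha_p,\on{LogJac}_{X/S})$ via \ref{lemma:hom_ap_Gm_=_ap}, \ref{theorem:Raynaud}, \ref{raynaudlog} and \ref{mainrslt}, both arguments come down to the vanishing of homomorphisms from the connected group $\alpha_p$ into the tropical part. The paper does this by asserting that the sections of $\on{TroPic}_{X/S}[p]$ are powers of $\bb Z/p\bb Z$ and invoking \ref{lemma:morphisms_alpha_p_to_Z/pZ_are_trivial}; your snake-lemma embedding $\on{TroJac}_{X/S}[p]\hra \ca H_{1,X/S}/p\,\ca H_{1,X/S}$ is a slightly more careful version of the same reduction.

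One justification needs repair: $\ca H_{1,X/S}$ is \emph{not} \'etale-locally constant in general, since its stalks change under the edge contractions of \ref{proposition:tropicalizing_log_curves} (in \ref{example:nonproper_pic_of_elliptic_degeneration} the stalk is $\bb Z$ at the closed point and $0$ at the generic point). This does not break your argument, but the justification should be replaced: $\ca H_{1,X/S}/p\,\ca H_{1,X/S}$ is a \emph{strict} \'etale sheaf, i.e.\ pulled back from the small \'etale site of $\ul S$ (it is built from the \'etale and finite unramified algebraic spaces $\ca V_{X/S}$, $\ca H_{X/S}$, which do not see the log structure), and every sheaf of groups on the small \'etale site of $\ul S$ is representable by an \'etale group algebraic space over $\ul S$; homomorphisms from $\alpha_p$ to such a target vanish exactly as in \ref{lemma:morphisms_alpha_p_to_Z/pZ_are_trivial}. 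Alternatively, since $\ul{\alpha_{p,S}}\to\ul S$ is a homeomorphism and the target is strict \'etale, restriction along the zero section identifies sections over $\alpha_{p,S}$ with sections over $S$, which forces the image of the universal point of $\alpha_p$ to be zero. Note that some discreteness argument of this kind is genuinely needed, and "all value groups are powers of $\bb Z/p\bb Z$" alone cannot suffice: $\alpha_p$ itself has that property, yet $\sheafhom(\alpha_p,\alpha_p)=\bb G_a$ by \ref{proposition:hom_ap_ap_=_Ga}. Finally, the naturality of the comparison map, which you flag as the main obstacle, is treated at the same terse level in the paper's own proof, so you are not missing anything the authors supply.
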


\begin{proof}
    For all log scheme maps $T \to S$, $\on{TroPic}_{X/S}[p](T)$ is a power of $\bb Z/p\bb Z$. Thus, by \ref{lemma:morphisms_alpha_p_to_Z/pZ_are_trivial}, the group sheaf $\sheafhom(\alpha_p,\on{TroPic}_{X/S}[p])$ is trivial and the map
    \[
    \sheafhom(\alpha_p,\on{Pic}_{X/S}[p]) \to \sheafhom(\alpha_p,\on{LogPic}_{X/S}[p])
    \]
    is an isomorphism. We conclude using \ref{lemma:hom_ap_Gm_=_ap}, \ref{theorem:Raynaud} and \ref{mainrslt}.
\end{proof}

By \ref{alpha_p}, klf $\alpha_p$-torsors on a log curve are equivalent to fppf $\alpha_p$-torsors on the underlying prestable curve. To describe the latter, we must first discuss the $p$-torsion of Jacobians in characteristic $p$.

\begin{definition}
    Let $S$ be a scheme and $p$ a prime. A \emph{self-dual truncated Barsotti-Tate group of level $1$} over $S$, or \emph{$BT_1$} for short, is a pair $(B \to S,\phi)$ where $B/S$ is a finite, flat, commutative $S$-group killed by $p$ and $\phi$ is a $S$-isomorphism $B \iso B^D$ between $B$ and its Cartier dual. We will often abusively call $B$ a $BT_1$ and keep the identification $B=B^D$ implicit.
\end{definition}

\begin{example}
    Let $A/S$ be a principally polarized abelian variety. The polarization induces via the Weil pairing an isomorphism $A[p] \iso A[p]^D$, thereby making $A[p]$ a $BT_1$.
\end{example}

The underlying groups of $BT_1$s over algebraically closed fields of characteristic $p$ were classified independently in \cite{kraft1975kommutative} (unpublished) and in \cite{Oort2001}. We refer to \cite{PriesShortGuidepTorsion} for an introduction to this topic and to \cite{Oort2001}, \cite{Moonen2001} for a complete treatment. \cite{Oort2001} also discusses the behavior of (underlying groups of) $BT_1$s under specialization. Two important invariants of $BT_1$s are the following.

\begin{definition}\label{local_local_part_and_p_rank}
    Let $(B,\phi)$ be a $BT_1$ over an algebraically closed field $k$ of characteristic $p$. The \emph{local-local part} of $(B,\phi)$ is the largest sub-$k$-group $\ca I\subset B$ which is connected and has connected Cartier dual. If $\ca I=B$, we say $(B,\phi)$ is \emph{local-local}. The $k$-group scheme of connected components of $B/k$ is necessarily of the form $(\bb Z/p\bb Z)^r$, and we call $r$ the \emph{$p$-rank} of $(B,\phi)$.
\end{definition}

 We will need the following structure result.

\begin{proposition}\label{proposition:BT1s}
    Let $k$ be an algebraically closed field of characteristic $p$ and $(B,\phi)$ a $BT_1$ over $k$. Let $r,\ca I$ be respectively the $p$-rank and local-local part of $B$. Then
    \begin{equation}\label{eqndirectsumdecomp}
        B \simeq (\mu_p \times \bb Z/p\bb Z)^r \times \ca I.
    \end{equation}
    Furthermore, $\ca I$ is canonically autodual.
\end{proposition}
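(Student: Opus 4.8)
The plan is to invoke the canonical decomposition of finite commutative group schemes of $p$-power order over a perfect field into four pieces, indexed by whether the group and its Cartier dual are connected (``local'') or étale (``reduced''), and then to exploit the functoriality of this decomposition together with the self-duality $\phi$. Concretely, over the algebraically closed field $k$ one has a canonical, functorial splitting
\[
B = B_{ll} \times B_{le} \times B_{el} \times B_{ee},
\]
where the subscripts record the pair (type of $B$, type of $B^D$), with $l$ denoting local and $e$ denoting étale (see \cite{PriesShortGuidepTorsion} and the references therein). First I would identify each factor. Since $B$ is killed by $p$ and $k=\bar k$, the étale factors are constant and the factors of multiplicative type are diagonalizable: $B_{le} \cong \mu_p^{a}$ and $B_{el} \cong (\bb Z/p\bb Z)^{b}$. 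The étale-étale factor $B_{ee}$ vanishes, because a nonzero étale $p$-torsion group over $\bar k$ is a power of $\bb Z/p\bb Z$, whose Cartier dual $\mu_p$ is connected. By construction $B_{ll}$ is the largest connected subgroup with connected dual, i.e. the local-local part $\ca I$ of \ref{local_local_part_and_p_rank}, while the component group $B/B^0 = B_{el} \cong (\bb Z/p\bb Z)^{b}$, so $b$ equals the $p$-rank $r$.

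Next I would feed in the self-duality. Cartier duality is additive and sends a factor of type $le$ to one of type $el$, a factor of type $el$ to one of type $le$, and preserves the types $ll$ and $ee$; hence $(B^D)_{le} = (B_{el})^D$ and $(B^D)_{ll} = (B_{ll})^D$. Applying the functorial projections to the isomorphism $\phi \colon B \iso B^D$ then yields $B_{le} \cong (B_{el})^D$ and $B_{ll} \cong (B_{ll})^D$. The first gives $\mu_p^{a} \cong \bigl((\bb Z/p\bb Z)^{b}\bigr)^D \cong \mu_p^{b}$, so $a=b=r$; the second exhibits $\ca I = B_{ll}$ as canonically autodual, proving the ``furthermore''. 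Combining the descriptions of the factors,
\[
B \cong \ca I \times \mu_p^{r} \times (\bb Z/p\bb Z)^{r} \cong (\mu_p \times \bb Z/p\bb Z)^{r} \times \ca I,
\]
as claimed.

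The main obstacle is justifying that $\phi$ respects the four-fold decomposition, and in particular that the autoduality of $\ca I$ is the canonical one rather than merely some abstract isomorphism of group schemes. This is where functoriality is essential: because $(-)_{ll}, (-)_{le}, \dots$ are functors (the connected-étale splitting over a perfect field, and its dual refinement, being canonical), every isomorphism — $\phi$ in particular — carries each isotypic piece of $B$ to the corresponding piece of $B^D$, so that $\phi$ restricts to an isomorphism $\ca I \iso \ca I^D$. I would take care to state precisely the canonicity of the connected-étale splitting and of its dual refinement over the perfect field $k$, since this is the only non-formal input; the identification of the individual factors over $\bar k$ and the bookkeeping of types under Cartier duality are then routine.
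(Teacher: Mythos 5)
Your proof is correct, but it runs on a different key input than the paper's. You invoke the canonical, functorial four-type decomposition $B = B_{ll}\times B_{le}\times B_{el}\times B_{ee}$ of a finite commutative group scheme over a perfect field, and then let functoriality do all the work: since every homomorphism respects the decomposition, $\phi$ in particular restricts to isomorphisms $B_{le}\iso (B_{el})^D$ and $B_{ll}\iso (B_{ll})^D$, which gives both $a=b=r$ and the canonical autoduality of $\ca I$ at once. The paper instead builds a decomposition by hand: over $k=\overline{k}$ the connected-\'etale sequence splits because $B(k)\cong B/B^0$, one picks an isomorphism $B/B^0\cong(\bb Z/p\bb Z)^r$, dualizes the resulting splitting $B=B^0\oplus_k(\bb Z/p\bb Z)^r$ through $\phi$ to obtain $B=(B^0)^D\oplus_k\mu_p^r$, and then realizes $\ca I$ as the intersection $B^0\times_B(B^0)^D$, verifying maximality directly. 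Your route buys cleaner canonicity (the autoduality of $\ca I$ is visibly the restriction of $\phi$, with no choices made) at the cost of a bigger black box — the four-type splitting and its functoriality, which you should cite precisely (e.g.\ Demazure--Gabriel, or the references in \cite{PriesShortGuidepTorsion}); the paper's argument is more elementary and self-contained, but involves auxiliary choices and the extra intersection step. One gloss in your write-up deserves spelling out: that $B_{ll}$ is the \emph{largest} connected subgroup with connected dual (i.e.\ equals $\ca I$ of \ref{local_local_part_and_p_rank}) is not literally ``by construction'' — it needs the short standard argument that a connected subgroup $H\subset B$ with connected dual has zero projection to $B_{el}\times B_{ee}$ (no nonzero maps from a connected group to an \'etale one) and zero projection to $B_{le}\cong\mu_p^a$ (dualize: no nonzero maps from the \'etale group $(\bb Z/p\bb Z)^a$ to the connected group $H^D$), hence $H\subset B_{ll}$. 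This is exactly the same kind of argument the paper uses for its inclusion $\ca I\hra B^0\times_B(B^0)^D$, so neither approach escapes it.
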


\begin{proof}
        This is known, but we prove it for convenience. Let $B^0$ be the connected component of identity in $B$. The quotient $B/B^0$ is the group of connected components of $B$. Since $k$ is algebraically closed and $B/k$ is finite, the group of $k$-points $B(k)$ is canonically isomorphic to $B/B^0$ so the quotient map $B \to B/B^0$ has a section. Pick an isomorphism $B/B^0 \iso (\bb Z/p\bb Z)^r$. Dualizing the direct sum decomposition
        \[
        B = B^0 \oplus_k (\bb Z/p\bb Z)^r
        \]
        yields
        \begin{equation*}
            B = (B^0)^D \oplus_k \mu_p^r,
        \end{equation*}
        from which we deduce a direct sum decomposition
        \begin{equation}\label{eqnpigwrn}
            B = \left(B^0 \times_B (B^0)^D\right) \oplus_k (\mu_p \times \bb Z/p\bb Z)^r.
        \end{equation}
        The sub-$k$-group $B^0 \times_B (B^0)^D$ of $B$ is autodual. All that remains to show is that it is equal to $\ca I$. We have $B^0 \times_B (B^0)^D \hra \ca I$ since $B^0 \times_B (B^0)^D$ is connected with connected Cartier dual. The other inclusion $\ca I \hra B^0 \times_B (B^0)^D$ holds since $(\bb Z/p\bb Z)^r$ is discrete and $\mu_p^r$ has discrete dual.
\end{proof}

We may now deduce the structure of the group of fppf $\alpha_p$-torsors on a prestable curve:

\begin{proposition}\label{proposition:structure_alpha_p_torsors}
    Let $\pi \colon X \to \Spec k$ be a prestable curve over an algebraically closed field of characteristic $p$. Let $X^\nu \to X$ be the normalization map. Let $r,\ca I$ be respectively the $p$-rank and local-local part of $\on{Pic}_{X^\nu/k}[p]$, which is a $BT_1$  over $k$. Let $H_1$ be the first homology group of the dual graph of $X/k$. Then
    \begin{enumerate}
        \item The normalization sequence
        \begin{equation}\label{eqn:normalisation_seq_torsionpart_pic}
       0 \to \bb \mu_p^{H_1} \to \on{Pic}_{X/k}[p] \to \on{Pic}_{X^\nu/k}[p] \to 0 
    \end{equation}
    of fppf sheaves on $\Spec k$ is exact and split.
    \item Any pair consisting of a splitting of \ref{eqn:normalisation_seq_torsionpart_pic} and an isomorphism
    \[
    \on{Pic}_{X^\nu/k}[p] \iso (\mu_p \times \bb Z/p\bb Z)^r \times \ca I
    \]
    canonically induces an isomorphism
    \[
    R^1_{fppf}\pi_*\alpha_p \iso \alpha_p^{H_1} \times \alpha_p^r \times \sheafhom_k(\alpha_p,\ca I).
    \]
    \end{enumerate}
\end{proposition}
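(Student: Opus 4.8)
The plan is to establish part (1) first and then deduce part (2) from it together with Raynaud's theorem.

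For part (1), I would realise \ref{eqn:normalisation_seq_torsionpart_pic} as the $p$-torsion of the normalisation sequence of the generalised Jacobian, and then split it. Since $\ul X/k$ is nodal, $\on{Pic}^0_{X/k}$ is a semiabelian variety fitting in $0 \to T \to \on{Pic}^0_{X/k} \to \on{Pic}^0_{X^\nu/k} \to 0$, with $\on{Pic}^0_{X^\nu/k}$ an abelian variety (a product of Jacobians of the components of $X^\nu$) and $T$ a torus whose cocharacter lattice is $H_1$, so that $T[p] = \mu_p \otimes_{\bb Z} H_1 = \mu_p^{H_1}$. As $\times p \colon T \to T$ is an fppf epimorphism, $T/pT = 0$, and the snake lemma applied to multiplication by $p$ gives $0 \to \mu_p^{H_1} \to \on{Pic}^0_{X/k}[p] \to \on{Pic}^0_{X^\nu/k}[p] \to 0$; since the Néron--Severi groups of $X$ and $X^\nu$ are torsion-free (they inject into free groups of multidegrees), $\on{Pic}[p] = \on{Pic}^0[p]$ in both cases and this is exactly \ref{eqn:normalisation_seq_torsionpart_pic}. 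To split it, I would Cartier-dualise, obtaining an extension with constant étale quotient $(\mu_p^{H_1})^D$; because $H^1_{fppf}(\Spec k,-)$ vanishes on every finite commutative $k$-group scheme (a dévissage to $\mu_p,\alpha_p,\bb Z/p\bb Z$ over the algebraically closed $k$), the $k$-points of the middle term surject onto those of the quotient, so lifting an $\bb F_p$-basis of $(\mu_p^{H_1})^D(k)$ splits the dualised sequence, and dualising back splits \ref{eqn:normalisation_seq_torsionpart_pic}.

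For part (2), the sheaf form of Raynaud's theorem (\ref{theorem:Raynaud} and the remark following it) gives $R^1_{fppf}\pi_*\alpha_p \iso \sheafhom_k(\alpha_p^D, \on{Pic}_{X/k})$, and $\alpha_p^D = \alpha_p$ by \ref{lemma:hom_ap_Gm_=_ap}; since $\alpha_p$ is killed by $p$, this equals $\sheafhom_k(\alpha_p, \on{Pic}_{X/k}[p])$. Feeding in the chosen splitting of \ref{eqn:normalisation_seq_torsionpart_pic} and the chosen decomposition $\on{Pic}_{X^\nu/k}[p] \iso (\mu_p \times \bb Z/p\bb Z)^r \times \ca I$ from \ref{proposition:BT1s} writes $\on{Pic}_{X/k}[p]$ as $\mu_p^{H_1} \times \mu_p^r \times (\bb Z/p\bb Z)^r \times \ca I$, and I would then apply $\sheafhom_k(\alpha_p,-)$ factor by factor. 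The $(\bb Z/p\bb Z)^r$ factor dies by \ref{lemma:morphisms_alpha_p_to_Z/pZ_are_trivial}; each $\mu_p$ contributes an $\alpha_p$ via $\sheafhom(\alpha_p,\mu_p) = \sheafhom(\mu_p^D,\alpha_p^D) = \sheafhom(\bb Z/p\bb Z, \alpha_p) = \alpha_p$ (Cartier duality and \ref{lemma:hom_ap_Gm_=_ap}); and $\ca I$ contributes $\sheafhom_k(\alpha_p,\ca I)$. Collecting the factors yields $\alpha_p^{H_1} \times \alpha_p^r \times \sheafhom_k(\alpha_p,\ca I)$, with the isomorphism natural in the two chosen pieces of data.

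I expect the splitting in part (1) to be the main obstacle: the real content is to split \ref{eqn:normalisation_seq_torsionpart_pic} as group schemes and not merely on $k$-points, and passing to Cartier duals so that the quotient becomes constant étale over an algebraically closed field is precisely what makes the point-lifting legitimate. A secondary subtlety worth flagging is the identity $\sheafhom(\alpha_p,\mu_p) = \alpha_p$, whose naive computation through grouplike elements is deceptively easy to get wrong; routing it through Cartier duality and \ref{lemma:hom_ap_Gm_=_ap} sidesteps that danger entirely.
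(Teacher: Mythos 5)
Your proof is correct, and part (2) coincides with the paper's own argument (Raynaud's theorem in sheaf form, $\alpha_p^D=\alpha_p$, killing the $\bb Z/p\bb Z$ factors by \ref{lemma:morphisms_alpha_p_to_Z/pZ_are_trivial}, and $\sheafhom(\alpha_p,\mu_p)=\alpha_p$). The exactness argument in part (1) is also essentially the paper's: the paper applies the same snake-lemma observation, together with fppf-surjectivity of $\times p$ on $\bb G_m$, directly to $0 \to \bb G_m^{H_1} \to \on{Pic}_{X/k} \to \on{Pic}_{X^\nu/k} \to 0$, whereas you pass through $\on{Pic}^0$ and torsion-freeness of the N\'eron--Severi groups; these are the same computation packaged differently. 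Where you genuinely diverge is the splitting, which you rightly single out as the crux. The paper never dualizes: it pulls the extension back along the two factors of $\on{Pic}_{X^\nu/k}[p]\iso(\mu_p\times\bb Z/p\bb Z)^r\times\ca I$ from \ref{proposition:BT1s}, obtaining extensions $B_{\ca I}$ of $\ca I$ and $B_r$ of $(\mu_p\times\bb Z/p\bb Z)^r$ by $\mu_p^{H_1}$, and splits each structurally --- $B_{\ca I}$ because its local-local part meets the multiplicative kernel $\mu_p^{H_1}$ trivially and maps isomorphically onto $\ca I$, and $B_r$ because a $p$-killed extension of $\mu_p$ or $\bb Z/p\bb Z$ by $\mu_p$ is trivial. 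Your route --- Cartier-dualize so the quotient becomes the constant group $(\bb Z/p\bb Z)^{H_1}$, lift an $\bb F_p$-basis of its $k$-points using the vanishing of $H^1_{fppf}(\Spec k,-)$ on finite commutative group schemes over an algebraically closed field, extend linearly (legitimate since homs out of a constant group scheme are exactly homs on $k$-points, and both terms are killed by $p$), then dualize back --- is uniform, needs no case analysis, and postpones all use of \ref{proposition:BT1s} to part (2); its only external input is the standard d\'evissage for fppf $H^1$ over an algebraically closed field. The paper's argument buys something in exchange: it stays entirely inside the category of finite group schemes with no cohomological input, and it produces a splitting visibly compatible with the decomposition of $\on{Pic}_{X^\nu/k}[p]$ that part (2) quantifies over. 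Both are complete proofs.
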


\begin{proof}
    (2) follows from (1) by using \ref{theorem:Raynaud}, \ref{lemma:morphisms_alpha_p_to_Z/pZ_are_trivial} and \ref{lemma:hom_ap_Gm_=_ap}. The exactness of \ref{eqn:normalisation_seq_torsionpart_pic} follows from that of the sequence of fppf $k$-sheaves
    \[
    0 \to \bb G_m^{H_1} \to \on{Pic}_{X/k} \to \on{Pic}_{X^\nu/k} \to 0
    \]
    and from the surjectivity of multiplication by $p$ on $\bb G_m$ (as an fppf sheaf). Let $B_{\ca I},B_r$ be the restrictions of $\on{Pic}_{X/k}[p]$ along $\ca I \to \on{Pic}_{X^\nu/k}[p]$ and $(\mu_p \times \bb Z/p\bb Z)^r \to \on{Pic}_{X^\nu/k}[p]$ respectively. The exact sequences
    \begin{align*}
        0 \to \bb \mu_p^{H_1} \to & B_{\ca I} \to \ca I \to 0 \\
        0 \to \bb \mu_p^{H_1} \to & B_r \to (\mu_p \times \bb Z/p\bb Z)^r \to 0
    \end{align*}
    induced by \ref{eqn:normalisation_seq_torsionpart_pic} both split, the first because $\ca I$ is the largest connected subgroup of $B_{\ca I}$ with connected Cartier dual and the second because an extension of $\mu_{p,k}$ or $\bb Z/p\bb Z$ by $\mu_{p,k}$ is trivial if it is killed by $p$. Therefore, \ref{eqn:normalisation_seq_torsionpart_pic} splits as well.
\end{proof}

\begin{remark}\label{remark:alpha_p_tors_on_nodal_curve}
    In the setting of \ref{proposition:structure_alpha_p_torsors}, describing all the possible local-local $BT_1$ groups $\ca I$ can be difficult, so we do not aim to give a full description of $\alpha_p$-torsors on prestable curves. We refer the interested reader to \cite{PriesShortGuidepTorsion} and \cite{Oort2001}. It seems known to the experts that $\ca I$ always admits a filtration whose successive quotients are direct sums of copies of $\alpha_p$, so we include \ref{proposition:hom_ap_ap_=_Ga} in this discussion. 
\end{remark}

\begin{lemma}\label{proposition:hom_ap_ap_=_Ga}
    There is a canonical isomorphism $\sheafhom(\alpha_p,\alpha_p)=\bb G_a$.
\end{lemma}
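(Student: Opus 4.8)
The plan is to exhibit an explicit canonical morphism $\bb G_a \to \sheafhom(\alpha_p,\alpha_p)$ and to verify it is an isomorphism by a direct computation on sections. Recall that $\alpha_p$ is the kernel of the Frobenius endomorphism of $\bb G_a$, so it carries a tautological scaling action of $\bb G_a$: for a section $a$ of $\bb G_a$, multiplication by $a$ preserves $p$-nilpotents and commutes with addition, hence defines an endomorphism of $\alpha_p$. This produces a morphism of sheaves
\[
\phi \colon \bb G_a \to \sheafhom(\alpha_p,\alpha_p), \qquad a \mapsto (x \mapsto ax),
\]
which is additive since $(x\mapsto ax)+(x\mapsto bx)=(x\mapsto (a+b)x)$ for the pointwise group law on $\sheafhom$. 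As both source and target are fppf sheaves and the internal hom is computed by $\sheafhom(\alpha_p,\alpha_p)(\Spec R)=\Hom_R(\alpha_{p,R},\alpha_{p,R})$, it suffices to show that $\phi$ is bijective on $R$-points for every $\bb F_p$-algebra $R$.

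The computation goes as follows. An endomorphism of $\alpha_{p,R}$ is the same as a homomorphism of Hopf algebras $R[T]/(T^p)\to R[T]/(T^p)$, where $T$ is primitive; such a map is determined by the image $f(T)=\sum_{i=0}^{p-1}a_iT^i$ of $T$, subject to the additivity condition $f(T_1+T_2)=f(T_1)+f(T_2)$ in $R[T_1,T_2]/(T_1^p,T_2^p)$. Setting $T_1=T_2=0$ forces $a_0=0$, and comparing coefficients of the monomial $T_1T_2^{\,i-1}$ (which is part of an $R$-basis, since $1\le i-1\le p-2<p$) yields $a_i\binom{i}{1}=i\,a_i=0$ for each $2\le i\le p-1$. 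Since such an $i$ is a nonzero, hence invertible, element of $\bb F_p\subset R$, we get $a_i=0$, so $f(T)=a_1T$. Thus every endomorphism is multiplication by the scalar $a_1\in R$, which is exactly $\phi(a_1)$; conversely, distinct scalars give distinct homomorphisms. Hence $\phi$ is bijective on $R$-points, and therefore an isomorphism of sheaves.

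There is no serious obstacle here: the only point requiring care is the vanishing of the higher coefficients, which rests on the elementary observation that $\binom{i}{1}=i$ is invertible in $\bb F_p$ for $2\le i\le p-1$, together with the fact that the relevant mixed monomials survive in $R[T_1,T_2]/(T_1^p,T_2^p)$ because their exponents stay below $p$. One should also record that the additive group structures match (pointwise addition of homomorphisms corresponds to addition of scalars) and that $\phi$ is manifestly canonical, being induced by the tautological $\bb G_a$-module structure on the Frobenius kernel $\alpha_p\subset \bb G_a$; this makes the identification $\sheafhom(\alpha_p,\alpha_p)=\bb G_a$ canonical as claimed.
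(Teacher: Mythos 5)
Your proof is correct and takes essentially the same approach as the paper's: both compute $\sheafhom(\alpha_p,\alpha_p)(\Spec R)$ by identifying an endomorphism with the image $f(T)=\sum_i a_iT^i$ of the coordinate in $R[T]/(T^p)$ and showing that the group-homomorphism condition forces $f(T)=aT$, functorially in $R$. The only difference is that you actually verify the key step the paper merely asserts, namely the vanishing of $a_0$ and of the higher coefficients via the mixed monomials $T_1T_2^{\,i-1}$ and the invertibility of $i$ in $\bb F_p$.
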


\begin{proof}
    Let $R$ be a ring over $\bb F_p$. Any $f \colon \Spec R[x]/x^p \to \Spec R[y]/y^p$ is characterized by the image $f^\#(y)$ of $y$ in $R[x]/x^p$, and $f$ is a group homomorphism if and only if $f^\#(y)$ is of the form $ax$ for some $a\in R$. This gives a bijection $\sheafhom(\alpha_p,\alpha_p)(R) \iso \bb G_a(R)$. This bijection is functorial, hence defines a canonical isomorphism $\sheafhom(\alpha_p,\alpha_p)=\bb G_a$.
\end{proof}

\begin{remark}
    Let $s=(\Spec k,\o M)$ be a log geometric point such that $k$ is algebraically closed and of characteristic $p$. Let $X/s$ be a log curve such that the tropical length (in $\o M$) of every node of $X$ is a multiple of $p$. Let $X^\nu$ be the normalization of $\ul X$ and $r,\ca I$ be the $p$-rank, resp. local-local part of $\on{Pic}_{X^\nu/k}[p]$. The log Weil pairing discussed in \ref{example:univ_torsor_on_nodal_cubic} makes $\on{LogPic}_{X^\nu/k}[p]$ a $BT_1$ with local-local part $\ca I$ and $p$-rank $r+h_1$ where $h_1$ is the first Betti number of the dual graph of $X/k$. Together, \ref{alpha_p}, \ref{proposition:BT1s} and \ref{proposition:structure_alpha_p_torsors} show that the group scheme of klf (equivalently, fppf) $\alpha_p$-torsors on $X/s$ is explicitly determined by its $BT_1$. In particular, there always exists a \emph{smooth} $k$-curve with the same group of isomorphism classes of $\alpha_p$-torsors as $X/s$.
\end{remark}

\begin{remark}
    Let $\ca A_g$ be the moduli stack of $g$-dimensional, principally polarized abelian varieties. Let $\ca U \to \ca A_g$ be the universal abelian variety. Oort shows in \cite{Oort2001} that $\ca A_g$ is stratified by the loci on which the $BT_1$ $\ca U[p]$ has constant isomorphism class. We only discussed Jacobians here, but one should also expect the Oort stratification to extend to a similar stratification of (a sufficient root stack of) the moduli space $\ca A_g^{log}$ of principally polarized log abelian varieties. Let $\partial^{\on{Oort}}$ be the boundary divisor of this stratification, i.e. the locus where the geometric fibres of $\ca U[p]$ are not isomorphic to $(\mu_p \times \bb Z/p\bb Z)^g$. The torus/tropical part of $\ca U$ only contributes to the $p$-rank of $\ca U[p]$, so the boundary divisors $\ca A_g^{log} \setminus \ca A_g$ and $\partial^{\on{Oort}}$ intersect transversally in $\ca A_g^{log}$.
\end{remark}

\bibliographystyle{alpha} 
\bibliography{prebib}

\bigskip

\noindent

\bigskip

\noindent
Sara MEHIDI, {\sc Mathematical Institute, Utrecht University, Hans Freudenthalgebouw, Budapestlaan 6, 3584 CD Utrecht, Netherlands.\\
Email address: {\tt s.mehidi@uu.nl}\\
Thibault POIRET,
{\sc  School of Mathematics and Statistics, University of St Andrews, Mathematical Institute North Haugh St Andrews, KY16 9SS, United Kingdom.} \\
Email address: {\tt tdp1@st-andrews.ac.uk}

\end{document}